\numberwithin{equation}{section}
\theoremstyle{plain}
\newtheorem{thm}{Theorem}[section]  
\newtheorem{prop}[thm]{Proposition} 
\newtheorem{lem}[thm]{Lemma}
\newtheorem{hyp}{Hypothesis}
\theoremstyle{definition}
\newtheorem{remark}[thm]{Remark}
\newcommand{\lap}{\Delta}
\newcommand{\grad}{\nabla}
\newcommand{\eps}{{\epsilon}}
\newcommand{\vareps}{{\varepsilon}}
\newcommand{\df}{\coloneqq}
\newcommand{\eqn}[1]{\begin{equation*} #1 \end{equation*}}
\newcommand{\eqnum}[1]{\begin{equation} #1 \end{equation}}
\newcommand{\noi}{\noindent}
\newcommand{\parnoi}{\par \noi}
\newcommand{\RR}{\mathbb{R}}
\newcommand{\NN}{\mathbb{N}}
\newcommand{\BB}{\mathbb{B}}
\newcommand{\lm}{\lambda}
\newcommand{\om}{\omega}
\newcommand{\Om}{\Omega}
\newcommand{\Snm}{{\mathbb{S}^{n-1}}}
\newcommand{\SSS}{\mathbb{S}}
\DeclareMathOperator{\dv}{d\hspace{-0.1em}V}
\newcommand{\half}{\frac{1}{2}}
\newcommand{\p}{^{(p)}}
\newcommand{\q}{^{(q)}}
\newcommand{\qq}{^{(q+1)}}
\DeclareMathAlphabet{\mathpzc}{OT1}{pzc}{m}{it}
\newcommand{\normal}{{\mathpzc{n}}}
\DeclareMathOperator{\ex}{\mathrm{ex}}
\DeclareMathOperator{\co}{\mathrm{co}}
\newcommand{\abseig}{\mu\p}
\newcommand{\releig}{\lm\p}
\newcommand{\exeig}[2]{\mu^{{(#1)}}_{{#2,\ex}}}
\newcommand{\exeigpo}{\exeig{p}{1}}
\newcommand{\coeig}[2]{\mu^{(#1)}_{#2,\co}}
\newcommand{\coeigpo}{\coeig{p}{1}}
\newcommand{\Rh}{R_h}
\newcommand{\Rhh}{\hat{R}_h}
\newcommand{\rhh}{\hat{r}_h}
\newcommand{\BnRh}{\BB_{\Rh}}
\newcommand{\SRh}{\SSS_{\Rh}}
\newcommand{\dom}{\mathfrak{D}}
\newcommand{\Dom}{{\widetilde\dom}}
\newcommand{\pa}{\partial}
\newcommand{\calB}{\mathcal{B}}
\newcommand{\RhRc}{{\frac{\Rh}{R_c}}}
\newcommand{\Rhrc}{\frac{\Rh}{r_0}}
\DeclareMathOperator{\vol}{\mathrm{Vol}}
\newcommand{\proj}{\mathpzc{P}}
\DeclareMathOperator{\card}{\mathrm{Card}}
\newcommand{\calO}{\mathcal{O}}
\DeclareMathOperator{\jac}{\mathrm{Jac}}
\DeclareMathOperator{\id}{\mathpzc{I}}
\DeclareMathOperator{\op}{op}
\newcommand{\inv}{^{-1}}
\DeclareMathOperator{\diam}{\mathrm{Diam}}
\DeclareMathOperator{\dist}{\mathrm{Dist}}
\newcommand{\omnorm}{\Vert\om\Vert_{L^2\Om^p(\dom)}}
\newcommand{\ann}{\mathcal{A}}
\newcommand{\SSp}{{\mathbb{S}^p}}
\newcommand{\celleps}[1]{{\widetilde{\mathcal{C}}_{#1}}}
\newcommand{\thetatil}{{\tilde{\theta}}}
\newcommand{\Appr}[1]{O\left(#1\right)}
\DeclareMathOperator{\supp}{Supp}
\newcommand{\holes}{{\mathfrak{h}}}
\newcommand{\Inv}[1]{{\frac{1}{#1}}}
\DeclareMathOperator{\intr}{Int}
\newcommand{\restr}{\mathpzc{r}}
\newcommand{\diff}{\hat{\delta}}
\newcommand{\nuh}{\hat{\nu}}
\newcommand{\etah}{\hat{\eta}}
\newcommand{\etab}{\bar{\eta}}
\newcommand{\psib}{\bar{\psi}}
\newcommand{\nrm}[1]{\left\Vert #1 \right\Vert}
\newcommand{\Rpart}{R_{\scriptscriptstyle \hspace{-0.1em}\mathcal{P}}}
\newcommand{\Rch}{\hat{R}_c}
\newcommand{\inn}[3]{\langle\hspace{-3pt}\langle {#2},{#3}\rangle\hspace{-3pt}\rangle_{#1}}
\DeclareMathOperator{\Rel}{rel}
\newcommand{\rel}{{\Rel}}
\DeclareMathOperator{\cl}{cl}
\DeclareMathOperator{\cc}{cc}
\subjclass[2020]{58J50, 58A10, 58C40, 35P15}
\begin{document}

\title[]{Lower bounds for the eigenvalues of the Hodge Laplacian on certain non-convex domains}

\author{Tirumala Chakradhar}
\address{Tirumala Chakradhar (corresponding author), University of Bristol, School of Mathematics, Fry Building, Woodland Road, Bristol BS8 1UG, United Kingdom.}
\email{tirumala.chakradhar@bristol.ac.uk}

\author{Pierre Nicolle-Guerini}
\address{Pierre Nicolle-Guerini, Ecole Sp\'eciale Militaire de Saint-Cyr, D\'epartement de Math\'ematiques \& Informatique, 240, Avenue de l'Ecole Sp\'eciale Militaire, 78211 Saint-Cyr-l'Ecole, France.}
\email{pguerini@ac-versailles.fr}

\begin{abstract}
We establish geometric lower bounds for the smallest positive eigenvalue of the Hodge Laplacian in the class of non-convex domains given by Euclidean annular regions with a convex outer boundary and a spherical inner boundary. These bounds are then extended to convex domains with multiple holes, where we derive lower bounds for certain higher order exact eigenvalues, and under additional geometric assumptions, also for the smallest positive eigenvalue. For $1$\nobreakdash-forms on compact manifolds with boundary, we provide a general lower bound on the smallest exact eigenvalue\enspace--\enspace corresponding to the first positive Neumann eigenvalue\enspace--\enspace which, in certain respects, is better than the classical Cheeger inequality. Furthermore, we emphasise the necessity of the \emph{contact radius} in the lower bounds of the main results. Our proofs employ local-to-global arguments via an explicit isomorphism between \v{C}ech cohomology and de~Rham cohomology to obtain Poincar\'e-type inequalities with explicit geometric dependence, and utilise certain generalised versions of the Cheeger--McGowan gluing lemma.
\end{abstract}

\keywords{Hodge Laplacian, eigenvalue bounds, differential forms, annular domains, \v{C}ech cohomology, gluing lemma}

\maketitle
\tableofcontents

\section{Introduction} \label{sec: intro}
The Hodge Laplacian on a Riemannian $n$\nobreakdash-manifold $(M^n,g)$, that acts on the space of smooth differential $p$\nobreakdash-forms on $M$, for $p\in \{0,\dots,n\}$, is defined to be
\eqn{
\lap\p\df d\delta+ \delta d,
}
where $d$ denotes the exterior derivative and $\delta$ the codifferential. It is a generalisation of the Laplace-Beltrami operator acting on smooth functions on $M$ (i.e., $0$\nobreakdash-forms), and is well known for its interplay with geometry, analysis, and topology, besides its significance in modern mathematical physics. The spectral geometry of the Hodge Laplacian for $p>0$ is often more sophisticated and interesting compared to the Laplacian on functions, thanks to a greater interaction with the underlying topology arising from Hodge theory. For instance, the analogue of the classical Cheeger inequality which gives a lower bound for the spectrum in the case of functions, is not known in general for $p$\nobreakdash-forms. See also the recent work~\cite{BC22} for a Cheeger-like lower bound on the smallest coexact eigenvalue for $1$\nobreakdash-forms, which depends on certain curvature bounds. 

\par
Eigenvalue bounds for the Hodge Laplacian have been extensively studied in the literature. See \cite{GM75,dod82,CT97,Gue04ii,tak05b,sav09,RS11,sav14,kwo16,CHMN18,CS19,EGH21} for some interesting lower bounds based on geometric and topological assumptions such as restrictions on the curvature, cohomology, codimension of isometric immersions, properties of certain `good covers', and, in the case of manifolds with boundary, additional assumptions such as the star convexity of the domain or the $p$\nobreakdash-convexity of the boundary. These type of bounds often employ some versions of the Bochner-Weitzenb\"ock identity or the Reilly formula for differential forms. On the other hand, articles such as \cite{CC90,PS01,tak03,lot04,jam11,AT24} describe the construction of manifolds with small eigenvalues, the prescription of the $p$\nobreakdash-spectrum under various restrictions, and the characterisation of manifolds with small eigenvalues. 
\par
In this article, we give a lower bound on the smallest positive eigenvalue of the Hodge Laplacian on $p$\nobreakdash-forms with absolute boundary conditions, for a Euclidean annular domain whose outer boundary is convex and the inner boundary is a round sphere, and study its extension to a family of perforated domains with convex outer boundaries.
\parnoi
For convex Euclidean domains $\mathcal{D}\subset \RR^n$, it was shown by the second named author~\cite{Gue04i} that the smallest positive absolute eigenvalue is bounded below by
\eqn{
\abseig_1 (\mathcal{D}) \geq \frac{C_1}{\diam(\mathcal{D})^2}, \qquad \forall \, p \in \{0,\dots,n\},
}
where $C_1$ is an explicit constant depending only on the dimension $n$ and the degree $p$ of the differential forms. A more refined version of these bounds were obtained by Savo~\cite{sav11}, also for convex Euclidean domains:
\eqn{
\frac{C_2}{D_p(\mathcal{D})^2}\leq \abseig_1(\mathcal{D})\leq \frac{C_3}{D_p(\mathcal{D})^2},\qquad \forall \, p \in \{1,\dots,n\}, 
}
where $C_2$ and $C_3$ are explicit constants depending only on $n$ and $p$; $D_p(\mathcal{D})$ is the length of the $p^\text{th}$ longest principal axis of the largest ellipsoid that can be inscribed in $\mathcal{D}$, known as the John ellipsoid. For example, $D_1(\mathcal{D})$ corresponds to the diameter, and $D_n(\mathcal{D})$ corresponds to the inradius of $\mathcal{D}$. These bounds were also generalised for sufficiently small geodesically convex domains in manifolds with non-negative curvature operator~\cite{KK20}.
\parnoi
Furthermore, for $p\in\{0,\dots,n-2\}$, Guerini~\cite[Theorem~2.1]{Gue04ii} constructed non-convex domains diffeomorphic to balls, with bounded diameters, which have an arbitrarily small positive eigenvalue for $p$\nobreakdash-forms. The convexity assumption in \cite{Gue04i} is a means to avoid the creation of non-trivial cohomology through collapsing as done in \cite{Gue04ii} by gluing a thick and a thin part that generalise the classical Cheeger dumbbell.
\parnoi
The lower bounds of this article for Euclidean annular regions depend on the distance between the inner hole and the outer boundary, which we refer to as the  \emph{contact radius} $R_c$.
\begin{figure}[H]
    \centering
    \includegraphics[width=0.35\linewidth]{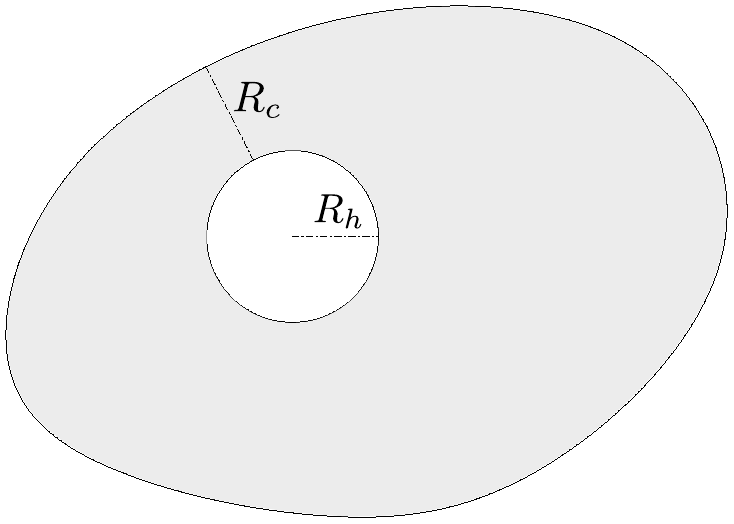}
    \caption{}
\end{figure}
\parnoi
\textbf{Notation. }Throughout this article, $K$ denotes a generic constant depending only on the dimension $n$, possibly different in different expressions. We use $K_i$ to denote specific constants, also depending only on the dimension $n$.
\parnoi
We now state the main results.
\begin{thm}\label{thm: first abs eigval lower bds for forms on annular} Let $\dom\subset \RR^n$ be a Euclidean annular domain, given by a smooth, bounded convex domain minus a hole. Then the following lower bounds hold for the smallest exact eigenvalue of the Hodge Laplacian on $p$\nobreakdash-forms.
\begin{enumerate}
\item For $n\geq 3$ and $p\in \{2,\dots,n-1\}$,
\eqn{
\exeigpo(\dom)\geq K \frac{1}{D^2}\left(\frac{R_c}{D} \right)^{5n^2-n-5}.
}
\item For $n\geq 2$ and $p=1$,
\eqn{
\exeig{1}{1}(\dom)\geq K \frac{1}{D^2}\left(\frac{R_c}{D}\right)^{11n-5}.
}
\end{enumerate}
Here, $D$ and $R_c$ denote the diameter and the contact radius of $\dom$, respectively.
\end{thm}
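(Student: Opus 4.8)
The plan is to convert the lower bound on $\exeigpo(\dom)$ into a Poincar\'e-type inequality and then to prove that inequality by a quantitative local-to-global argument. With absolute boundary conditions the nonzero exact eigenvalues on $p$-forms coincide with the coexact eigenvalues on $(p-1)$-forms, so $\exeigpo(\dom)=\coeig{p-1}{1}(\dom)$, and by the Rayleigh quotient the latter is $\ge 1/C$ exactly when $\|\phi\|_{L^2}^2\le C\,\|d\phi\|_{L^2}^2$ for every coexact $(p-1)$-form $\phi$. Since the coexact primitive of an exact form is its $L^2$-minimal primitive, it is enough to show: every exact $p$-form $\omega$ admits some $\alpha$ with $d\alpha=\omega$, the relevant boundary condition, and $\|\alpha\|_{L^2}^2\le C\,\|\omega\|_{L^2}^2$, with $C$ explicit in $n$, $D$, $R_c$ and $\Rh$. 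This primitive estimate is the real target.

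I would then fix a good cover $\{U_i\}$ of $\dom$ whose multiplicity and local geometry are controlled by $n$ and the ratios $D/R_c$, $\Rh/R_c$ alone. Away from the hole the region inherits the convexity of $\Dom$ and is covered by boundedly many pieces each bi-Lipschitz, with controlled distortion, to a convex set of bounded eccentricity, so that the Euclidean estimates of Guerini~\cite{Gue04i} and Savo~\cite{sav11} furnish explicit first-coexact-eigenvalue --- equivalently, local Poincar\'e --- constants on them and on their mutual intersections. A neighbourhood of the hole is covered by shell- and crescent-type pieces: since $\Dom$ is star-shaped about the centre of the hole, the collar fibres radially over $\mathbb{S}^{n-1}$ and is covered by cap-times-interval pieces whose thin direction has width $\sim R_c$ near the contact point, which forces a power of $\Rh/R_c$ pieces there and a subordinate partition of unity of gradient $\sim 1/R_c$; on each such piece --- and on every multiple intersection, again contractible of controlled shape --- an explicit Poincar\'e inequality in each form degree $\le p$ follows by separation of variables in the radial and spherical coordinates. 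The two ratios in the conclusion trace back precisely to the eccentricity of the worst convex-type piece and to the cardinality and partition-of-unity bounds of the shell part of the cover.

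With these ingredients I would run the \v{C}ech--de~Rham machinery with estimates: restrict $\omega$ to each $U_i$, pick a local primitive $\alpha_i$ of controlled norm, note that on $U_i\cap U_j$ the closed form $\alpha_i-\alpha_j$ equals $d\beta_{ij}$ by the degree-$(p-1)$ local Poincar\'e inequality, and descend the \v{C}ech complex, controlling the coboundary at each stage by local constants one degree lower. Because the nerve is homotopy equivalent to $\dom$, a ball with a ball removed and hence of the homotopy type of $\mathbb{S}^{n-1}$, the only obstruction in range lies in degree $n-1$: for $p\in\{2,\dots,n-2\}$ (and $p=1$, $n\ge3$) it vanishes, a partition-of-unity summation assembles a global primitive with the asserted bound, and one is done; for $p=n-1$ (and $p=1$, $n=2$) one must first split off the harmonic component --- the explicit harmonic $(n-1)$-form of the annulus --- and bound its $L^2$-norm against $\|\omega\|$ before patching, and it is this harmonic estimate, which deteriorates when the hole is large compared with its distance to the boundary, that produces the factor $\min\{1,(R_c/\Rh)^{3(n-1)(n+1)}\}$. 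Equivalently, the whole gluing can be packaged through a suitably generalised Cheeger--McGowan lemma, converting the local first-coexact-eigenvalue data together with the cover's multiplicity and partition-of-unity bounds directly into the global estimate. For $p=1$ the generic form-level lemma is wasteful, and I would instead invoke the sharper estimate for the first Neumann eigenvalue described in the introduction --- the one improving on Cheeger's inequality --- which is what yields the better exponents in part~(2).

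The step I expect to be the main obstacle is the quantitative bookkeeping: tracking how the local Poincar\'e constants, the covering multiplicity and the partition-of-unity gradients compound through the $p$-fold \v{C}ech iteration and the gluing lemma, and arranging the cover so that the outcome lands exactly on the exponents $2n^2-n-2$ and $3(n-1)(n+1)$ of part~(1), and $n-2$, $3n+2$ and $7n-5$ of part~(2), rather than on unspecified polynomials. The subtler structural issue is designing the cover near the pinch so that every piece and every multiple intersection stays contractible with geometry governed only by $R_c$, $\Rh$, $D$ and $n$ --- in particular keeping the shell pieces from wrapping all the way around the hole, which would destroy contractibility, while still covering the thin collar --- and, in top degree, controlling the harmonic projection precisely enough to extract the stated power of $R_c/\Rh$.
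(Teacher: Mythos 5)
Your opening reduction and your large-hole strategy do match the paper: one passes to the $L^2$-norm-minimising coexact primitive, so that $\exeigpo(\dom)$ is bounded below by $\Vert\om\Vert^2/\Vert\eta\Vert^2$ for \emph{any} primitive $\eta$ of the exact eigenform $\om$, and one builds $\eta$ by a quantitative \v{C}ech--de Rham descent over a good cover obtained by radially projecting a cover of the inner sphere (the paper convexifies each piece by an explicit quasi-isometry with Jacobian within $\half$ of the identity and applies the Chanillo--Treves convex-domain lemma \cite{CT97}, rather than your separation-of-variables local Poincar\'e inequalities, but that is a design choice). However, your account of where $\min\{1,(R_c/\Rh)^{3(n-1)(n+1)}\}$ comes from is incorrect, and it conceals the real gap. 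There is no harmonic component to split off in any degree: $\om$ is exact, so its de Rham class, and hence the \v{C}ech class of the final cocycle $(\eta_I)$ with respect to the good cover, vanishes, and the cocycle is a coboundary for every $p$ including $p=n-1$. The powers of $R_c/\Rh$ arise instead from the size of the cover, $N\leq K(\Rh/R_c)^{n-1}$, compounding through the $p$ stages of the descent (the cardinalities of the index sets and the partition-of-unity sums), giving an exponent $(3n-1)p+4(n-1)$ which is maximised at $p=n-1$ to yield $3(n-1)(n+1)$; the factor occurs for every $p\in\{2,\dots,n-1\}$, not only in top degree.

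The genuine missing piece is the regime $\Rh<\frac{R_c}{2}$, which is where the ``$\min$ with $1$'' is earned. In that regime any contractible cover adapted to the hole has pieces whose local constants, estimated as you propose, degenerate as $\Rh\to 0$ (the relevant volumes scale like $R_c\Rh^{n-1}$), whereas the asserted bound must be uniform in $\Rh$; your single cover therefore cannot deliver part (1) or (2) for small holes, and the harmonic splitting you invoke is not a substitute. The paper instead argues by cases: for $\Rh<\frac{R_c}{2}$ it writes $\dom=\dom_1\cup\dom_2$, where $\dom_2$ is the round annulus of radii $\Rh<R_c$ and $\dom_1$ has the hole inflated to radius $\frac{R_c}{2}$; it bounds $\exeigpo(\dom_2)\geq K/R_c^2$ by scaling together with the Ann\'e--Colbois spectral-convergence statement (Proposition~\ref{prop: anne colbois spectral convergence adaptation}, i.e.\ small holes do not create small exact eigenvalues), bounds $\dom_1$ by the large-hole case, and glues via the McGowan/Gentile--Pagliara lemma (Lemma~\ref{lem: modified McGowan}), using $H^{p-1}(\dom_1\cap\dom_2,\RR)=H^{p-1}(\Snm,\RR)=0$ for $2\leq p\leq n-1$, and via Theorem~\ref{thm: neumann lower bd for func in terms of union} when $p=1$. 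So your instinct to use the improved Neumann inequality for $p=1$ is right, but its role is precisely this small-hole gluing step, not a replacement for the \v{C}ech argument in the large-hole case; without some version of the spectral-convergence-plus-gluing argument your proof does not cover small holes at all.
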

\vspace{0em}
\begin{remark}
The above lower bounds on $\exeigpo$ for each $p\in \{1,\dots, n-1\}$ will suffice to deduce lower bounds for the smallest positive absolute eigenvalue for all $p\in \{0,\dots,n\}$: For $p=0$, the smallest positive Neumann eigenvalue $\mu_1=\exeig{1}{1}$; for $p\in \{1,\dots,n-2\}$, we have
\eqn{
\abseig_1=\min\{\exeigpo,\coeigpo\}=\min\{\exeigpo,\exeig{p+1}{1}\}
}
in view of the correspondence between the coexact $p$\nobreakdash-spectrum and the exact $(p+1)$\nobreakdash-spectrum. Moreover, we have from the duality between the absolute and the relative spectra that
\eqn{
\mu^{(n)}_1=\lm_1,
}
where $\lm_1$ is the first Dirichlet eigenvalue for functions, and
\eqn{
\mu^{(n-1)}_1=\min\{\exeig{n-1}{1},\coeig{n-1}{1}\}=\min\{\exeig{n-1}{1},\exeig{n}{1}\}=\min\{\exeig{n-1}{1},\mu^{(n)}_1\}=\min\{\exeig{n-1}{1},\lm_1\},
}
which covers the lower bounds on $\mu^{(n-1)}_1$ and $\mu^{(n)}_1$ using the Faber-Krahn inequality
\eqn{
\lm_1(\dom)\geq \frac{K}{\diam(\dom)^2}.
}
In particular, we have new lower bounds for the Neumann eigenvalues of annular domains. Note that, for planar domains, the spectrum for $1$\nobreakdash-forms consists solely of the Neumann and the Dirichlet eigenvalues for functions, and understanding these would already give a complete picture of the full spectrum of the Hodge Laplacian for $p\in\{0,1,2\}$.
\end{remark}
\parnoi
To prove Theorem~\ref{thm: first abs eigval lower bds for forms on annular}, we adapt the methods of Chanillo and Treves~\cite{CT97}\enspace--\enspace employing tools from \v{C}ech--de Rham theory\enspace--\enspace in the scenario where the inner hole is sufficiently large compared to the contact radius; while for the case where the hole is much smaller than the contact radius, we use a combination of the bounds from the former case and the spectral convergence of Ann\'e and Colbois~\cite{AC93}, with the help of a useful gluing lemma by McGowan~\cite{mcg93}. See Section~\ref{sec: main lower bds}.
\par
We also prove the following theorem, which gives a better lower bound for the smallest positive Neumann eigenvalue than the one deduced from the Cheeger inequality. This theorem would be of use in deriving the lower bounds of Theorem~\ref{thm: first abs eigval lower bds for forms on annular} for the case of functions and $1$\nobreakdash-forms.

\begin{thm}\label{thm: neumann lower bd for func in terms of union}
Let $(M^n,g)$ be a compact, connected, smooth Riemannian manifold with boundary ($n\geq 2$) and $\{U_1,U_2\}$ be an open cover with $U_1$ and $U_2$ (each of which is connected) having smooth boundaries. Then the first positive Neumann eigenvalue satisfies
\eqn{
\mu_1(M)\geq \frac{1}{32} \frac{\vol(U_1\cap U_2)}{\vol(M)}\min \left\{\mu_1(U_1),\mu_1(U_2)\right\}.
}
\end{thm}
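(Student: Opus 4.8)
The plan is to reduce to an equivalent Poincar\'e-type inequality and then invoke the variational characterisation of $\mu_1(M)$. Writing $m\df\min\{\mu_1(U_1),\mu_1(U_2)\}$, it suffices to show that for every $f\in H^1(M)$ there is a constant $c$ with
\[
\int_M (f-c)^2\,\dv \;\le\; \frac{32\,\vol(M)}{m\,\vol(U_1\cap U_2)}\int_M |\grad f|^2\,\dv ,
\]
because for each $f$ the global mean $\bar f\df\tfrac1{\vol(M)}\int_M f\,\dv$ minimises $c\mapsto\int_M(f-c)^2\,\dv$, and $\mu_1(M)=\inf\{\int_M|\grad f|^2\,\dv\,/\,\int_M(f-\bar f)^2\,\dv\}$ over non-constant $f$; hence $\mu_1(M)\ge \tfrac1{32}\,\tfrac{\vol(U_1\cap U_2)}{\vol(M)}\,m$. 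Two preliminary observations: $U_1\cap U_2\ne\emptyset$ --- otherwise $M=U_1\sqcup U_2$ would be disconnected --- so $\vol(U_1\cap U_2)>0$ and the bound is not vacuous; and $\mu_1(U_i)>0$ since $U_i$ is connected with smooth boundary, so the Neumann Poincar\'e inequality is available on each $U_i$.

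Fix $f$, put $a_i\df \tfrac1{\vol(U_i)}\int_{U_i}f\,\dv$, and record two ingredients. First, the Neumann Poincar\'e inequality on $U_i$, applied to the mean-zero function $f-a_i$, gives
\[
\int_{U_i}(f-a_i)^2\,\dv \;\le\; \frac1{\mu_1(U_i)}\int_{U_i}|\grad f|^2\,\dv \;\le\; \frac1{m}\int_{U_i}|\grad f|^2\,\dv .
\]
Second, the two local means are close, controlled by the overlap:
\[
\vol(U_1\cap U_2)\,(a_1-a_2)^2=\int_{U_1\cap U_2}(a_1-a_2)^2\,\dv\le 2\int_{U_1}(f-a_1)^2\,\dv+2\int_{U_2}(f-a_2)^2\,\dv\le \frac4m\int_M|\grad f|^2\,\dv ,
\]
using $(a_1-a_2)^2\le 2(f-a_1)^2+2(f-a_2)^2$, the previous display, and the covering identity $\int_{U_1}|\grad f|^2\,\dv+\int_{U_2}|\grad f|^2\,\dv=\int_M|\grad f|^2\,\dv+\int_{U_1\cap U_2}|\grad f|^2\,\dv\le 2\int_M|\grad f|^2\,\dv$, which is where the hypothesis $M=U_1\cup U_2$ enters.

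It then remains to assemble the bound by taking $c=a_2$: split $\int_M(f-a_2)^2\,\dv\le\int_{U_1}(f-a_2)^2\,\dv+\int_{U_2}(f-a_2)^2\,\dv$, estimate the second term directly by the Poincar\'e inequality, and in the first term insert $a_1$ via $(f-a_2)^2\le 2(f-a_1)^2+2(a_1-a_2)^2$, then apply the two displays above together with $\vol(U_1)\le\vol(M)$. Collecting the resulting terms --- and absorbing any pure multiple of $\tfrac1m\int_M|\grad f|^2\,\dv$ into a multiple of $\tfrac{\vol(M)}{m\,\vol(U_1\cap U_2)}\int_M|\grad f|^2\,\dv$, legitimate since $\vol(U_1\cap U_2)\le\vol(M)$ --- yields the claimed inequality with an explicit constant that one can arrange to be at most $32$ (in fact a smaller constant is attainable with slightly more care). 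The argument is elementary once organised this way; the one point to watch is that the overlap volume must land in the \emph{numerator} of the final bound, which it does precisely through the division by $\vol(U_1\cap U_2)$ in the mean-discrepancy step, so the main difficulty is really just careful bookkeeping, together with the routine density argument letting one test with smooth $f$ and restrict to $H^1(U_i)$.
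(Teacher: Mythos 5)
Your argument is correct, and it reaches the stated bound (indeed a better one) by a route that is organised differently from the paper's. The paper works directly with an $L^2$-normalised first eigenfunction $f$ of $M$, exploits the orthogonality $\int_M f=0$, assumes without loss of generality $\int_{U_2}f^2\geq \tfrac12$, and then runs a dichotomy: either $\mu_1(M)\geq \tfrac12\mu_1(U_2)$, or a chain of upper and lower bounds on $|\bar f_{U_2}|$ collapses to an inequality of the form $\tfrac{1}{\sqrt 2}\leq 4\left(\tfrac{\vol(M)}{\vol(U_1\cap U_2)}\right)^{\!1/2}\left(\tfrac{\mu_1(M)}{\min\{\mu_1(U_1),\mu_1(U_2)\}}\right)^{\!1/2}$, which is where the constant $32$ comes from. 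You instead prove a universal Poincar\'e-type inequality for every $f\in H^1(M)$ by patching the two local Neumann--Poincar\'e inequalities through the mean-discrepancy estimate $\vol(U_1\cap U_2)(a_1-a_2)^2\leq \tfrac4m\int_M|\grad f|^2$, and then conclude at once from the variational characterisation; no normalisation, no eigenfunction, no case analysis. The two proofs share the same ingredients (local Poincar\'e on $U_1,U_2$, and control of the difference of the two local means by the overlap volume\,---\,the paper's bound on $|\bar f_{U_1}-\bar f_{U_2}|$ is exactly the eigenfunction version of your second display), but your assembly is cleaner and quantitatively sharper: carrying out your bookkeeping with $c=a_2$ gives $\int_M(f-a_2)^2\leq \bigl(\tfrac3m+\tfrac{8\vol(M)}{m\,\vol(U_1\cap U_2)}\bigr)\int_M|\grad f|^2\leq \tfrac{11\,\vol(M)}{m\,\vol(U_1\cap U_2)}\int_M|\grad f|^2$, i.e.\ the theorem holds with $\tfrac1{11}$ in place of $\tfrac1{32}$, and the argument applies verbatim to closed manifolds, as in the paper's remark. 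The only hypotheses you use are those the paper also needs (the Neumann variational characterisation on $U_1,U_2$ and the restriction $H^1(M)\to H^1(U_i)$), so there is no hidden gap.
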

\vspace{0em}
\begin{remark}
The theorem also holds for closed  manifolds, with $\mu_1(M)$ on the LHS replaced by the first positive eigenvalue of the Laplace-Beltrami operator on $M$.
\end{remark}
\noi
The above inequality is geometrically optimal in the following sense: It is known from \cite{ann95} that the first positive Neumann eigenvalue on a Euclidean Cheeger dumbbell with a cylinder of radius $\eps$ tends to $0$ at a rate of $\eps^{n-1}$, which is same as the rate we get by applying our inequality with the open sets $U_1$ and $U_2$ each chosen to be the union of a ball and a cylinder of radius $\eps$, because the ratio $\frac{\vol(U_1\cap U_2)}{\vol(M)}$ too goes as $\eps^{n-1}$ and the eigenvalues $\mu_1$ of $U_1$ and $U_2$ are known to be bounded below independent of $\eps$, from \cite{Gue04ii}. On the other hand, recall that the Cheeger inequality gives a rate of $\eps^{2(n-1)}$.
\begin{figure}[H]
    \centering
    \includegraphics[width=0.4\linewidth]{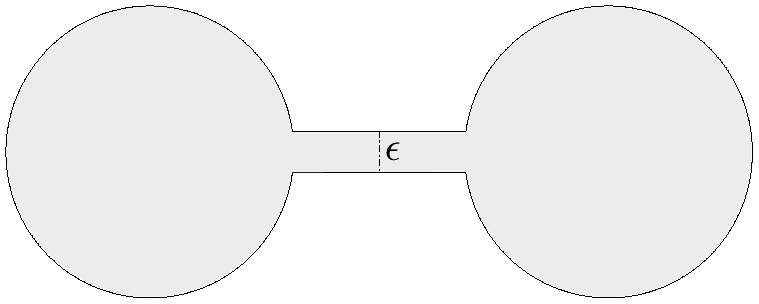}
    \caption{}
\end{figure}
\parnoi
Note, however, that the second positive Neumann eigenvalue of a manifold satisfies the lower bound~\cite[Corollary~4.65]{text-GHL}
\eqn{
\mu_2(M)\geq \half \min\left\{{\mu_1}(U_1),{\mu_1}(U_2)\right\},
}
and one would require a choice of $U_1$ and $U_2$ with small first positive eigenvalues in order to construct a manifold with small second positive eigenvalue.
\parnoi
Theorem~\ref{thm: neumann lower bd for func in terms of union} is proved in the latter half of Section~\ref{sec: main lower bds}, using only the variational characterisation for the Neumann eigenvalue and careful rearrangements of the terms involved.
\par
Next, we show that the dependence of the lower bounds of Theorem~\ref{thm: first abs eigval lower bds for forms on annular} on the contact radius is necessary.
\begin{thm}\label{thm: domains with small eigval}
Let $n\geq 2$ and $p\in \{1,\dots,n-1\}$. There exists a family $(\ann_\eps^p)_{\eps\in(0,\half)}$ of Euclidean annular $C^1$\nobreakdash-domains with convex outer boundary, such that $\lim_{\eps\to 0}\exeigpo(\ann_\eps^p)=0$, with the diameter uniformly bounded and the contact radius tending to zero. It is also implied that the result holds for the smallest positive Neumann eigenvalue for functions, since $\mu_1=\exeig{1}{1}$.
\end{thm}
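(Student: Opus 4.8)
The plan is to build, for each fixed $p\in\{1,\dots,n-1\}$, a one-parameter family of annular domains whose smallest exact $p$-eigenvalue is controlled by a single explicit test form. Split $\RR^n=\RR^p\times\RR^{n-p}$, write $x=(x',x'')$, and let $\Om^p\subset\RR^n$ be a bounded convex domain with $C^1$ boundary, invariant under $O(p)\times O(n-p)$, whose boundary coincides with the round sphere $\SSS_1=\pa\BB_1$ over a fixed neighbourhood $C$ of $\Sigma\df\SSS_1\cap(\{0\}\times\RR^{n-p})\cong\SSS^{n-p-1}$ and lies strictly outside $\overline{\BB_1}$ elsewhere; such a $\Om^p$ exists (take, e.g., the convex hull of $\overline{\BB_1}$ and a thin tubular neighbourhood of the sphere $\{|x'|=2,\ x''=0\}$, or prescribe a smooth $O(p)\times O(n-p)$-symmetric support function that is $\equiv1$ on a cap and $>1$ off it), and it has diameter $\le5$, say. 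Put $\ann_\eps^p\df\Om^p\setminus\overline{\BB_{1-\eps}}$ for $\eps\in(0,\tfrac12)$. Then $\ann_\eps^p$ has convex $C^1$ outer boundary, round inner boundary of radius $1-\eps\in(\tfrac12,1)$, diameter $\le5$, and contact radius $R_c=\mathrm{dist}(\overline{\BB_{1-\eps}},\pa\Om^p)=\eps\to0$, the minimum being attained over $C$. Two facts will drive the proof: (i) over $C$ the set between hole and outer boundary is the genuinely thin spherical shell $\overline{\BB_1}\setminus\overline{\BB_{1-\eps}}$, of uniform radial width $\eps$; (ii) although $\ann_\eps^p\simeq\SSS^{n-1}$ for every $\eps>0$, the limit $\ann_0^p\df\Om^p\setminus\overline{\BB_1}$ deformation retracts onto the set of directions in which $\Om^p$ bulges past $\BB_1$, which is diffeomorphic to the complement of a tubular neighbourhood of $\SSS^{n-p-1}$ in $\SSS^{n-1}$ and hence homotopy equivalent to $\SSS^{p-1}$, so that $H^{p-1}$ jumps in the limit; moreover $\ann_0^p$ is uniformly bounded away from $\{x'=0\}$ (one checks $|x'|\ge c_*>0$ there). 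The jump of $H^{p-1}$, realised through a collapse of uniform width $\eps$, is what forces $\exeigpo\to0$.

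For $p\in\{2,\dots,n-1\}$ recall $\exeigpo=\coeig{p-1}{1}$, which equals the infimum of $\|d\eta\|^2/\|\eta\|^2$ over nonzero $(p-1)$-forms $\eta$ with $\iota_\normal\eta=0$ on $\pa\ann_\eps^p$ that are $L^2$-orthogonal to the closed forms satisfying $\iota_\normal(\cdot)=0$. Let $\sigma$ be the pull-back of the (unit-volume) volume form of $\SSS^{p-1}$ under $x\mapsto x'/|x'|$: a closed $(p-1)$-form, smooth and bounded off $\{x'=0\}$ with $|\sigma|\asymp|x'|^{-(p-1)}$; since $\sigma$ annihilates the radial $x'$-direction and all $x''$-directions while the outward normals to $\pa\Om^p$ and to $\SSS_{1-\eps}$ lie in $\mathrm{span}(x',x'')$ by the symmetry of the construction, one gets $\iota_\normal\sigma=0$ on all of $\pa\ann_\eps^p$. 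Fix a cutoff $\chi(|x'|)$, $\equiv0$ near $|x'|=0$ and $\equiv1$ for $|x'|\ge c_*/2$, and set $\gamma_\eps\df\chi(|x'|)\,\sigma$. Then $\iota_\normal\gamma_\eps=0$; since $|x'|\ge c_*$ on $\ann_0^p$ we have $\gamma_\eps=\sigma$ there, whence $\|\gamma_\eps\|_{L^2(\ann_\eps^p)}^2\ge\|\sigma\|_{L^2(\ann_0^p)}^2=:c_0>0$ uniformly; and $d\gamma_\eps=\chi'(|x'|)\,d|x'|\wedge\sigma$ is supported in a fixed $x'$-shell $\{c_1'\le|x'|\le c_*/2\}$, on which $|d\gamma_\eps|$ is bounded by a fixed constant, and which meets $\ann_\eps^p$ only inside $\overline{\BB_1}\setminus\overline{\BB_{1-\eps}}$; hence $\|d\gamma_\eps\|_{L^2(\ann_\eps^p)}^2\le K\,\vol(\overline{\BB_1}\setminus\overline{\BB_{1-\eps}})\le K\eps\to0$. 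Finally, any closed $(p-1)$-form $\beta$ with $\iota_\normal\beta=0$ is exact on $\ann_\eps^p$ (because $H^{p-1}(\ann_\eps^p;\RR)=H^{p-1}(\SSS^{n-1};\RR)=0$), whereas $\gamma_\eps-\beta$ is closed on $\ann_0^p$ and restricts there to a representative of the generator of $H^{p-1}(\ann_0^p;\RR)\cong\RR$; integrating over a fixed, smoothly varying family of $(p-1)$-cycles sweeping out a chunk of $\ann_0^p$ and applying Stokes' theorem and Cauchy--Schwarz gives $\|\gamma_\eps-\beta\|_{L^2(\ann_\eps^p)}\ge c_1>0$, uniformly in $\eps$ and in $\beta$. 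Writing the absolute Hodge decomposition $\gamma_\eps=(\gamma_\eps)_{\mathrm{ex}}+(\gamma_\eps)_{\mathrm{co}}+(\gamma_\eps)_{\mathrm{harm}}$, noting $d\gamma_\eps=d(\gamma_\eps)_{\mathrm{co}}$ and $\|(\gamma_\eps)_{\mathrm{co}}\|\ge c_1$, we conclude $\exeigpo(\ann_\eps^p)=\coeig{p-1}{1}(\ann_\eps^p)\le\|d\gamma_\eps\|^2/\|(\gamma_\eps)_{\mathrm{co}}\|^2\le K\eps/c_1^2\to0$.

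For $p=1$ the construction makes $\ann_0^1$ disconnect into two ``bulge'' pieces, one near each end of the $x_1$-axis, while $\exeig{1}{1}=\mu_1$. Taking $\eta$ equal to $\pm1$ on $\ann_\eps^1\cap\{\pm x_1>0\}$, smoothly interpolated across the uniformly thin shell over $C$, one gets $\int_{\ann_\eps^1}\eta=0$ by the $x_1$-symmetry, $\|\eta\|_{L^2}^2\ge c_0>0$, and $\|d\eta\|_{L^2}^2\le K\,\vol(\overline{\BB_1}\setminus\overline{\BB_{1-\eps}})\le K\eps$; hence $\mu_1(\ann_\eps^1)=\exeig{1}{1}(\ann_\eps^1)\le K\eps\to0$.

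The step I expect to require the most care is the uniform lower bound $\|\gamma_\eps-\beta\|_{L^2(\ann_\eps^p)}\ge c_1>0$: the trace/Stokes estimate must be carried out with a constant depending only on the $\eps$-independent geometry of $\ann_0^p$ (and not on the, possibly wildly oscillating, closed form $\beta$), and one must verify that the component thereby bounded below lies in the coexact summand, so that the degenerating eigenvalue is genuinely the exact $p$-eigenvalue $\exeigpo$ rather than a coexact one (here the vanishing $H^{p-1}(\ann_\eps^p;\RR)=0$ is exactly what is used). A secondary, more elementary task is the convex geometry of $\Om^p$: exhibiting a $C^1$ (or smooth) convex body equal to a round sphere over an explicit polar cap and strictly bulging elsewhere with the stated homotopy type, and confirming the uniform width $\eps$ of the shell over that cap. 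This is precisely where the hypothesis of a spherical inner boundary and the control of $R_c$ enter, and where the blow-up of the naive solid-angle form $\sigma$ along $\{x'=0\}$ would surface were the collapse not made uniform.
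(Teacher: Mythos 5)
Your construction is correct, and its engine is the same as the paper's: a family of annuli that collapses, as the contact radius tends to zero, onto a limit domain with an extra dimension of cohomology in the relevant degree, together with a cut-off angular form as test object whose exterior derivative is supported only in the thin region. The differences are in the implementation. Geometrically, the paper makes the shell of width $\eps^{n}$ over the whole region $\{r\le\half\}$ and cuts the singular angular form off at distance $\asymp\eps$ from its singular set, which is why the calibration $\eps^{n}$ is needed against the blow-up of the form and of the cut-off gradient; you make the domain thin (width $\eps$) only over a fixed cap around $\Sigma$ and cut off at the fixed distance $c_*/2$ from $\{x'=0\}$, so $|d\gamma_\eps|$ stays bounded and $\Vert d\gamma_\eps\Vert^2\le K\eps$ with no calibration\enspace--\enspace a genuine simplification. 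In the variational step, the paper's test form is coclosed and tangential by construction and is inserted directly into the characterisation of the first coexact eigenvalue (using that the relevant Betti number of $\ann^p_\eps$ vanishes, resp.\ orthogonality to constants in degree zero); you instead project onto the coexact part and need the uniform bound $\Vert\gamma_\eps-\beta\Vert\ge c_1$ over closed $\beta$, which your period argument does deliver: every closed $(p-1)$-form on $\ann^p_\eps\simeq \SSS^{n-1}$ is exact, so its periods over the sweeping cycles $S_t\subset\ann^p_0$ vanish while $\int_{S_t}\sigma=1$, and Fubini plus Cauchy--Schwarz give a $c_1$ depending only on a fixed tube $W\subset\ann^p_0$. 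Two remarks: (i) the step you flag as hardest can be bypassed, since your $\gamma_\eps=\chi(|x'|)\,\sigma$ is in fact already coclosed ($\sigma$ is coclosed and $i_{\grad\chi}\sigma=0$ because $\grad\chi$ is $x'$-radial) and tangential, and $H^{p-1}(\ann^p_\eps,\RR)=0$ for $p\ge 2$, so it is an admissible test form exactly as in the paper; (ii) your stated characterisation of $\coeig{p-1}{1}$ is slightly off\enspace--\enspace the orthogonal complement of the coexact subspace consists of all weakly closed forms, with no boundary condition, not just the tangential ones\enspace--\enspace but this is harmless: run the $c_1$ bound against arbitrary closed $\beta$ (your argument does not use tangentiality) and conclude via the boundary-condition-free characterisation~\eqref{varchar of dodziuk} applied to the exact form $d\gamma_\eps$ and its minimal-norm primitive, or equivalently via the decomposition~\eqref{hodge decomp from taylor pde}. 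Your $p=1$ case is the same dumbbell argument as the paper's degree-zero case. In sum: same mechanism, a different (and in places leaner) family of domains, and a different closing estimate; the paper's route never needs a quantitative distance-to-closed-forms bound, while yours buys a cut-off at fixed scale and hence avoids the $\eps^{n}$ calibration.
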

\noi
The main idea of Theorem~\ref{thm: domains with small eigval} involves the construction of an appropriate family of domains that tend to a limiting domain whose $p^\text{th}$ cohomology has a higher dimension. Then, using the harmonic form associated with the extra dimension of the cohomology on the limiting domain, we construct suitable test forms on these domains, to be used in the variational characterisation for the eigenvalues.
\par
We now extend the lower bounds to the exact eigenvalues for convex Euclidean domains with multiple holes. We consider smooth, bounded domains $\dom\subset \RR^n$ obtained by removing $\holes$ many non-overlapping balls (away from the boundary), of radius $\Rh$ each, from a convex domain. Let $D$ denote the diameter of $\dom$, and define the \textit{contact radius} of such domains with multiple holes to be
\eqn{
\hat{R}_c(\dom)\df \min \left\{r_c,\frac{d_h}{2}\right\},
}
where $r_c$ is the distance between the outer boundary and a hole closest to it, and $d_h$ is the distance between a pair of holes closest to each other.
\begin{thm}\label{thm: induction method bounds for identical holes}
Let $\dom\subset \RR^n$ be a smooth, bounded domain as above, with convex outer boundary and $\holes$ many holes of radius $\Rh$ each. Suppose that the Voronoi partition of $\dom$ with respect to the centers of the holes, up to reordering, is such that the union of the first $\ell$ pieces has a convex outer boundary, for every $\ell\in\{1,\dots,\holes\}$. Then we have the following lower bounds for the first exact eigenavalue.
\begin{enumerate}[(i)]
\item For $n\geq 3$ and $p\in \{2,\dots,n-1\}$,
\eqn{
\exeigpo(\dom)\geq K \frac{1}{D^2} \left(\frac{\hat{R}_c}{D}\right)^{5n^2-n-7+2\holes}.
}
\item For $n\geq 2$ and $p=1$,
\eqn{
\exeig{1}{1}(\dom)\geq K \frac{1}{D^2} \left(\frac{\hat{R}_c\Rh^{n-1}}{D^n}\right)^{\holes-1}\left(\frac{\hat{R}_c}{D}\right)^{11n-5}.
}
\end{enumerate}
\end{thm}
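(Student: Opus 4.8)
The plan is to argue by induction on the number of holes, peeling off one Voronoi piece at a time. Write $W_\ell=V_1\cup\dots\cup V_\ell$ for the union of the first $\ell$ pieces of the Voronoi partition (in the order guaranteed by the hypothesis), so that $W_\holes=\dom$; by hypothesis each $W_\ell$ is a domain with convex outer boundary carrying $\ell$ holes of radius $\Rh$. Since the holes of $W_\ell$ form a subfamily of those of $\dom$, and since a short computation from the definition of $\hat{R}_c$ shows that every Voronoi wall lies at distance $\geq\hat{R}_c(\dom)$ from each hole, one gets $\hat{R}_c(W_\ell)\geq\hat{R}_c(\dom)$ and $\diam(W_\ell)\leq D$. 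I will prove by induction on $\ell$ that $W_\ell$ satisfies the asserted bound with $\holes$ replaced by $\ell$ and with $D,\hat{R}_c,\Rh$ those of $\dom$; the case $\ell=\holes$ is the theorem, and $\ell=1$ is Theorem~\ref{thm: first abs eigval lower bds for forms on annular} applied to the single-hole domain $W_1$, bounding its contact radius below by $\hat{R}_c$ and its diameter above by $D$ and using the monotonicity of those bounds.

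For the inductive step I cover $W_\ell$ by two connected open sets $U_1\supset W_{\ell-1}$ and $U_2\supset V_\ell$, obtained by fattening $W_{\ell-1}$ and $V_\ell$ across their common wall by a distance $\sim\hat{R}_c$ and smoothing the resulting corners, so that $U_1\cap U_2$ is a smooth collar of width $\sim\hat{R}_c$ about that wall. The structural observation is that $V_\ell=\bigl(P_\ell\cap C\bigr)\setminus\overline{B_{\Rh}(c_\ell)}$, where $P_\ell$ is the convex Voronoi cell of the centre $c_\ell$ and $C$ the convex domain defining $\dom$; since $P_\ell\cap C$ is convex, $\overline{B_{\Rh}(c_\ell)}\subset P_\ell$, and the other holes stay at distance $\gtrsim\hat{R}_c$ from $\partial P_\ell$, the set $U_2$ is, after a corner-smoothing confined to a $\hat{R}_c$-neighbourhood of $\partial(P_\ell\cap C)$, a smooth convex domain minus one round ball; hence Theorem~\ref{thm: first abs eigval lower bds for forms on annular} applies to $U_2$, with contact radius $\geq\hat{R}_c$ and diameter $\leq D$. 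I would also record that $\{U_1,U_2\}$ admits a partition of unity $\chi$ with $\|d\chi\|_\infty\sim\hat{R}_c^{-1}$, that the collar $U_1\cap U_2$ is a smooth Euclidean domain of diameter $\leq D$ with trivial reduced cohomology and containing no hole, and --- the one genuinely geometric input, to be isolated as a lemma --- that the fattening may be arranged so that $\vol(U_1\cap U_2)/\vol(W_\ell)\gtrsim \hat{R}_c\,\Rh^{\,n-1}/D^{\,n}$; this last estimate is where the hypothesis of nested convex outer boundaries is used, to prevent $W_{\ell-1}$ and $V_\ell$ from meeting along an arbitrarily thin wall.

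For $p=1$, since $\exeig{1}{1}=\mu_1$ is the first positive Neumann eigenvalue on functions, I feed the cover $\{U_1,U_2\}$ into Theorem~\ref{thm: neumann lower bd for func in terms of union}: I bound $\mu_1(U_1)$ by the inductive hypothesis and $\mu_1(U_2)$ by Theorem~\ref{thm: first abs eigval lower bds for forms on annular}(2), and observe that the minimum of the two is, up to the universal constant, the bound for $U_1$, since it carries $\ell-2\geq 0$ additional factors of $\hat{R}_c\Rh^{n-1}/D^n$. Each stage therefore multiplies the current bound by one further factor $\tfrac1{32}\,\vol(U_1\cap U_2)/\vol(W_\ell)\gtrsim\hat{R}_c\Rh^{n-1}/D^n$, while the term $\min\{1,\dots\}$, being the same in the bounds for $U_1$ and for $U_2$, survives only to the first power; after $\holes-1$ stages this is exactly part (ii). For $p\in\{2,\dots,n-1\}$ I instead feed $\{U_1,U_2\}$ into the generalised Cheeger--McGowan gluing lemma for exact $p$-forms, which bounds $\exeigpo(W_\ell)$ below by a geometric prefactor times the minimum of $\exeigpo(U_1)$, $\exeigpo(U_2)$ and the exact eigenvalues of $U_1\cap U_2$ in degrees $p-1$ and $p$; the latter are $\gtrsim 1/D^2$ by the convex-domain bounds of Guerini and Savo recalled in the introduction, since $U_1\cap U_2$ is a convex-type Euclidean domain of diameter $\leq D$ with trivial reduced cohomology, so they do not bind, and the prefactor --- built from $\|d\chi\|_\infty^{-2}\sim\hat{R}_c^{2}$ and the volume ratios of the cover --- works out to $\gtrsim K(\hat{R}_c/D)^2$. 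Hence each stage costs one factor $(\hat{R}_c/D)^2$, the base exponent $2n^2-n-2$ becomes $2n^2-n-2+2(\holes-1)=2n^2-n-4+2\holes$, and $\min\{1,\dots\}$ again appears only to the first power: this is part (i).

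The point at which I expect the real work is the geometric bookkeeping rather than any single analytic step: one must check at every stage that $U_1$, $U_2$ and $U_1\cap U_2$ truly have the smooth boundaries and the convex-minus-ball (respectively convex-type, respectively contractible) structure required by Theorems~\ref{thm: first abs eigval lower bds for forms on annular} and~\ref{thm: neumann lower bd for func in terms of union} and by the gluing lemma --- this is where the hypothesis that every $W_\ell$ has convex outer boundary enters, together with the fact that Voronoi walls stay at distance $\gtrsim\hat{R}_c$ from all holes, so that the fattening and corner-smoothing create no new topology and move the relevant diameters, contact radii and volume ratios by at most a dimensional constant --- and then one must follow the powers of $\hat{R}_c/D$, $\hat{R}_c/\Rh$ and $\Rh/D$ through the $\holes-1$ stages so that they reproduce the stated exponents. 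Within this, the only estimate that is not essentially routine is the volume lower bound $\vol(U_1\cap U_2)\gtrsim \hat{R}_c\,\Rh^{\,n-1}$ for the overlap collar, since the factor $\Rh^{\,n-1}$ (rather than a bare power of $\hat{R}_c$) is precisely what produces the sharp $\bigl(\hat{R}_c\Rh^{n-1}/D^n\bigr)^{\holes-1}$ dependence in part (ii).
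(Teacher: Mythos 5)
Your proposal is correct and follows essentially the same route as the paper: an induction over the ordered Voronoi pieces, fattening the cells so that each stage is a two-set cover fed into Theorem~\ref{thm: neumann lower bd for func in terms of union} for $p=1$ and into the two-set Cheeger--McGowan/Gentile--Pagliara gluing lemma (Lemma~\ref{lem: modified McGowan}) for $p\in\{2,\dots,n-1\}$, with Theorem~\ref{thm: first abs eigval lower bds for forms on annular} supplying the bound for each single-hole piece. In particular your per-stage costs --- the factor $(\hat{R}_c/D)^2$ coming from $c_\rho\lesssim \hat{R}_c^{-2}$ against the $\gtrsim D^{-2}$ eigenvalue of the overlap, and the collar volume estimate $\vol(U_1\cap U_2)\gtrsim \hat{R}_c\,\Rh^{\,n-1}$ (realised in the paper by a cylinder of radius $\Rh$ and height comparable to the partition parameter) --- are exactly the ones used in the paper's proof of Theorem~\ref{thm: induction method bounds for multi holes}, of which the stated theorem is the identical-holes case.
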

\noi
A simple example of a domain satisfying the above hypothesis is as follows.
\begin{figure}[H]
    \centering
    \includegraphics[width=0.75\linewidth]{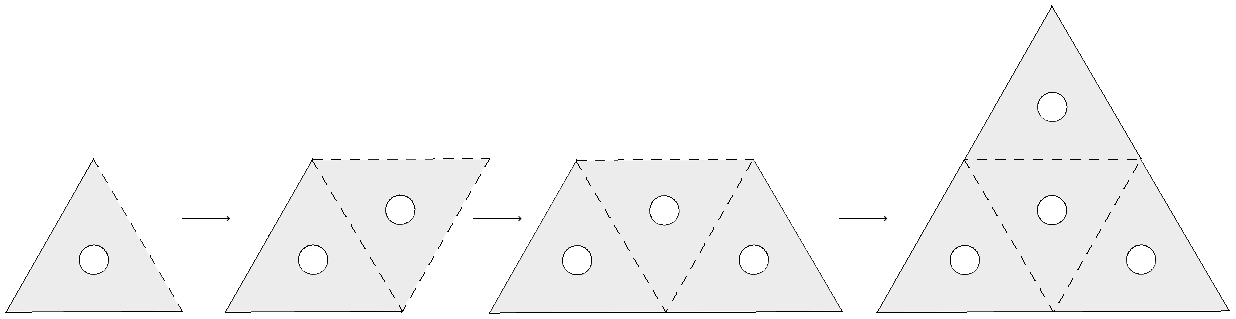}
    \caption{Example I}
\end{figure}
\noi
Although we use the Voronoi partition in the above theorem so as to have $\Rch$ defined independent of the partition, with an optimal contribution to the bounds, the lower bounds also hold if we consider any partition of $\dom$ into $\holes$ many pieces, each of which is a convex domain with a hole in the interior, and modify the definition of $\Rch$ appropriately. This also allows, for example, domains of the following type whose Voronoi partitions themselves may not satisfy our hypothesis unlike Example~I.
\begin{figure}[H]
    \centering
    \includegraphics[width=0.70\linewidth]{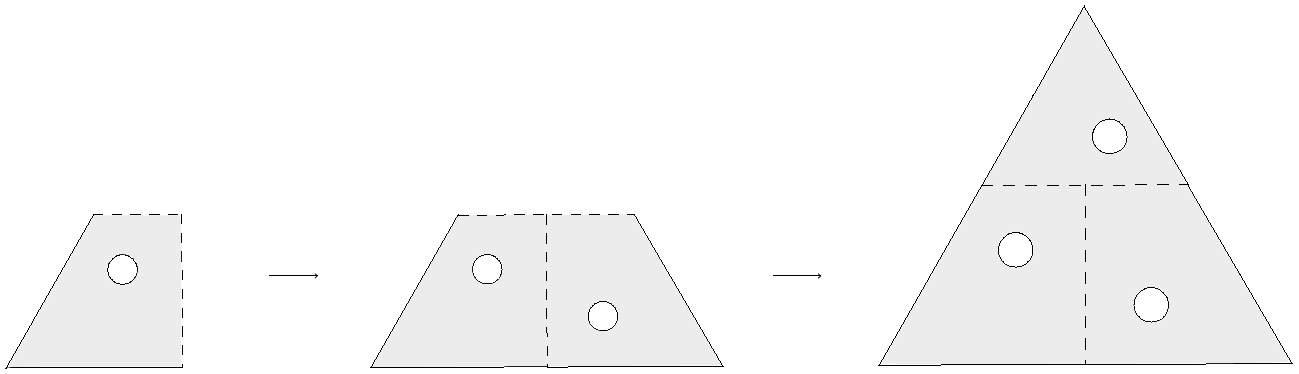}
    \caption{Example I\!I}
\end{figure}
\noi
Further, similar lower bounds can be obtained for a larger class of domains than those satisfying the above versions of the geometric assumption. See Remark~\ref{rmk: weaker hyp for induction method multi holes}. This allows in addition, the following domain, for instance, which is a union of the above two examples (see also Figure~\ref{fig:ExampleIV}).
\begin{figure}[H]
    \centering
    \includegraphics[width=0.35\linewidth]{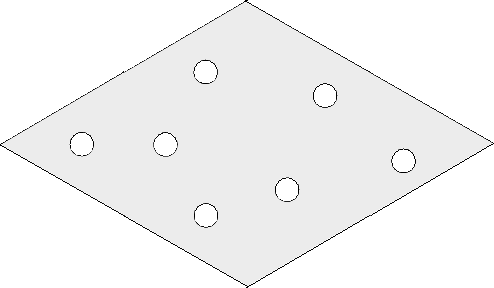}
    \caption{Example I\!I\!I}
    \label{fig:ExampleIII}
\end{figure}
\noi
The following, however, is a non-example. For this domain, no partition with convex outer boundaries satisfies our hypothesis, nor can we apply the extension provided by Remark~\ref{rmk: weaker hyp for induction method multi holes}.
\begin{figure}[H]
    \centering
    \includegraphics[width=0.25\linewidth]{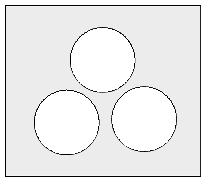}
    \caption{Non-example}
\end{figure}
\begin{remark}
We have stated Theorem~\ref{thm: induction method bounds for identical holes} for convex domains with holes of identical size, for simplicity. Analogous lower bounds in the generic setting of convex domains with multiple holes of varying size are presented in Theorem~\ref{thm: induction method bounds for multi holes}. For $p\in \{2,\dots,n-1\}$, we further highlight the necessity of the partition parameter $\Rpart$ (see Section~\ref{sec: lower bds multiple holes}) in the lower bounds by constructing a family of convex domains with multiple holes (with the diameter of the domain), satisfying the hypothesis of Theorem~\ref{thm: induction method bounds for multi holes}, having arbitrarily small eigenvalues as $\Rpart\to 0$. See Theorem~\ref{thm: dom with small eigval multi holes}.
\end{remark}
\par
The geometric assumption in Theorem~\ref{thm: induction method bounds for identical holes} (and Theorem~\ref{thm: induction method bounds for multi holes}) may be removed, with the price that we obtain lower bounds for higher order exact eigenvalues instead of the smallest one. These lower bounds are later presented in Theorem~\ref{thm: mcgowan method bounds for multi holes} for convex domains with possibly non-identical holes. The exponent of the ratio $\hat{R}_c/D$ in these bounds is independent of the number of holes, in contrast to the expressions of Theorems~\ref{thm: induction method bounds for identical holes}, and instead, a factor of $\holes^{2p-1}$ appears in the denominator of the lower bounds.
\par
The proof of Theorem~\ref{thm: induction method bounds for identical holes} (resp. Theorem~\ref{thm: induction method bounds for multi holes}) is based on an induction approach where we consider the sequence of annular domains with convex outer boundary obtained from the Voronoi partition (resp. Laguerre-Voronoi partition) of $\dom$ given by the hypothesis and use an adaptation of the gluing lemmas of McGowan~\cite[Lemma~2.3]{mcg93} and Gentile-Pagliara~\cite[Lemma~1]{GP95} to recursively employ the lower bounds of Theorem~\ref{thm: first abs eigval lower bds for forms on annular} for the eigenvalues of the pieces and the earlier mentioned bounds of~\cite{Gue04i} for the convex intersections of their open neighborhoods constructed suitably. On the other hand, we prove Theorem~\ref{thm: mcgowan method bounds for multi holes} by generalising McGowan's lemma to higher degree forms under appropriate topological assumptions on the open cover and applying it to the cover of the domain $\dom$ obtained from its Laguerre-Voronoi partition, in combination with the bounds mentioned above. See Section~\ref{sec: lower bds multiple holes} for the details.
\par
This article is organised as follows. In Section~\ref{sec: prelim} on preliminaries, we introduce the absolute and the relative eigenvalue problems for the Hodge Laplacian on compact Riemannian manifolds with boundary, followed by a brief description of the standard spectral properties and a version of the Hodge decomposition that is of relevance to us. In Section~\ref{sec: main lower bds}, we prove the lower bounds of Theorem~\ref{thm: first abs eigval lower bds for forms on annular} for the first exact eigenvalue of annular domains, and Theorem~\ref{thm: neumann lower bd for func in terms of union} for the lower bounds on the Neumann eigenvalues of a manifold in terms of those that of the covering subdomains. In Section~\ref{sec: small eigvals}, we prove Theorem~\ref{thm: domains with small eigval} by constructing annular domains with arbitrarily small eigenvalues and emphasise the necessity of the contact radius in our lower bounds. In Section~\ref{sec: lower bds multiple holes}, we describe the Laguerre-Voronoi partition for domains with multiple holes, then state and prove Theorem~\ref{thm: mcgowan method bounds for multi holes} about the lower bounds for higher order exact eigenvalues of domains with multiple holes, and Theorem~\ref{thm: induction method bounds for multi holes} that generalises Theorem~\ref{thm: induction method bounds for identical holes} dealing with the lower bounds for the smallest exact eigenvalue for convex domains with multiple holes. We also recall a few versions of McGowan's lemma as part of the proofs and present a generalisation of it to higher degree forms, the proof of which is somewhat technical and is the subject of the \hyperlink{appendix}{Appendix}.

\section{Preliminaries}\label{sec: prelim}
Let $(M^n,g)$ be a compact, connected, orientable, smooth Riemannian $n$\nobreakdash-manifold with boundary. As mentioned in the introduction, the Hodge Laplacian on the space of differential $p$\nobreakdash-forms $\Om^p(M)$, for $p\in\{0,\dots,n\}$, is defined by
\begin{align*}
\lap\p&:\Om^p(M)\to \Om^p(M)\\
\om&\mapsto (d \delta +\delta d) \om,
\end{align*}
where $d$ and $\delta$ denote the exterior derivative and the codifferential, respectively, with the convention that $d\equiv 0$ on $n$\nobreakdash-forms and $\delta\equiv 0$ on functions.  We use the standard notation $\langle.\,.\rangle$ to denote the canonical pointwise inner product of $p$\nobreakdash-forms induced by the Riemannian metric, and $\inn{L^2\Om^p(M)}{.}{.}$ to denote the $L^2$\nobreakdash-innerproduct of square integrable forms on $M$; $\vert\alpha\vert$ and $\Vert \alpha \Vert$ to denote the norms given by $\langle\alpha,\alpha\rangle^\half$ and $\inn{L^2\Om^p(M)}{\alpha}{\alpha}^\half$, respectively. The notation $\iota^*$ and $i_\normal$ stand for the pullback of a differential form with respect to the inclusion map $\iota:\pa M\hookrightarrow M$, and the interior product with respect to the unit outward normal vector field $\normal$ on the boundary.
\parnoi
\textbf{Absolute eigenvalue problem: }For $\om\in\Om^p(M)$,
\eqn{
\begin{cases}
\lap\p\om=\mu\,\om &\text{in $M$},\\
i_\normal \om=0 \text{ and } i_\normal d\om=0 &\text{on $\pa M$},
\end{cases}
}
where $\mu$ is the spectral parameter. The Hodge Laplacian with absolute boundary conditions, which generalises the Neumann problem for functions, has a discrete spectrum
\eqn{
0=\abseig_{0,1}=\dots=\abseig_{0,b_p}<\abseig_1\leq\abseig_2\leq \dots \nearrow +\infty
}
with finite multiplicities. Here, the dimension of the kernel, $b_p\in \{0\} \cup \NN$, is the absolute Betti number of $M$ (defined to be the rank of the $p^\text{th}$ homology group of $M$), a purely topological quantity independent of the metric. The $k^\text{th}$ positive eigenvalue has the variational characterisation
\eqn{
\abseig_k(M)= \min_{V_{b_p+k}}\,\, \max_{\om\in V_{b_p+k}\setminus\{0\}} \frac{\Vert d\om\Vert_{L^2\Om^{p+1}(M)}^2+\Vert \delta\om\Vert_{L^2\Om^{p-1}(M)}^2}{\Vert \om\Vert_{L^2\Om^p(M)}^2},
}
where $V_{b_p+k}$ denotes a $(b_p+k)$\nobreakdash-dimensional subspace of $p$\nobreakdash-forms with finite $H^1$\nobreakdash-norm that are tangential to the boundary (i.e., $i_\normal \om=0$). We refer to the text~\cite{hodge_text} for the definition of Sobolev norms on forms.
\par
We recall a version of the \emph{Hodge decomposition} for $p$\nobreakdash-forms~\cite[Proposition~9.8]{taylor-pde1} that is of relevance to the absolute eigenvalue problem. Given $\om\in \Om^p(M)$, there exist an $\alpha \in \Om^p(M)$ and a harmonic form $\gamma \in \Om^p(M)$ satisfying the absolute boundary conditions, $i_\normal \alpha=0$, $i_\normal d\alpha=0$, $i_\normal \gamma=0$ and  $i_\normal d\gamma=0$, such that
\eqnum{\label{hodge decomp from taylor pde}
\om=d\delta\alpha+\delta d \alpha+\gamma, 
}
and the three terms on the RHS are orthogonal in $L^2\Om^p(M)$. Note that $\gamma$, being a harmonic form satisfying the absolute boundary conditions, is closed and coclosed. As is well-known, it follows from this decomposition that the spectrum of the Hodge Laplacian splits into two families, the exact $p$\nobreakdash-spectrum
\eqn{
0<\exeig{p}{1}\leq \exeig{p}{2}\leq \dots \nearrow+\infty,
}
and the coexact $p$\nobreakdash-spectrum
\eqn{
0<\coeig{p}{1}\leq \coeig{p}{2}\leq \dots \nearrow+\infty,
}
in addition to the eigenvalue zero with finite (possibly zero) multiplicity. Moreover, the coexact $p$\nobreakdash-spectrum is identical to the exact $(p+1)$\nobreakdash-spectrum, for \mbox{$p\in\{0,\dots,n-1\}$}:
\eqn{
\coeig{p}{k}=\exeig{p+1}{k},\qquad\forall\,k\in \NN.
}
\parnoi
These spectra satisfy the variational characterisations given by
\eqn{
\exeig{p}{k}(M)= \min_{V_k^{\ex}} \max_{\om\in V_k^{\ex}\setminus\{0\}} \frac{\Vert \delta\om\Vert_{L^2\Om^{p-1}(M)}^2}{\Vert \om\Vert_{L^2\Om^p(M)}^2} \quad \text{and} \quad \coeig{p}{k}(M)= \min_{V_k^{\co}} \max_{\om\in V_k^{\co}\setminus\{0\}} \frac{\Vert d\om\Vert_{L^2\Om^{p+1}(M)}^2}{\Vert \om\Vert_{L^2\Om^p(M)}^2},
}
where $V_k^{\ex}$ (resp. $V_k^{\co}$) is a $k$\nobreakdash-dimensional subspace of the exact (resp. coexact) $p$\nobreakdash-forms with finite $H^1$\nobreakdash-norm that are tangential to the boundary. The above characterisations also hold with $V_k^{\ex}$ (resp. $V_k^{\co}$) replaced by $V_{b_p+k}^{\cl}$ (resp. $V_{b_p+k}^{\cc}$), a $(b_p+k)$\nobreakdash-dimensional subspace of the closed (resp. coclosed) $p$\nobreakdash-forms with finite $H^1$\nobreakdash-norm that are tangential to the boundary.
\parnoi
Furthermore, the following variant of the variational characterisation for the exact $p$\nobreakdash-spectrum (see, for example,~\cite[Proposition~3.1]{dod82}) would be of use in our proofs, as it has no restrictions on the boundary conditions for the test forms. For $p\in\{1,\dots,n\}$,
\eqnum{
\exeig{p}{k}(M)=\inf_{V_k} \sup_{\eta\in V_k\setminus \{0\}}\, \sup_{\theta\,:\, \eta=d\theta} \frac{\Vert \eta \Vert^2_{L^2\Om^p(M)}}{\Vert \theta \Vert^2_{L^2\Om^{p-1}(M)}},
}
where $V_k$ ranges over all $k$\nobreakdash-dimensional subspaces of the space of $L^2$\nobreakdash-integrable exact $p$\nobreakdash-forms on $M$.
\par
It is a natural choice to use the absolute boundary conditions in the study of the Hodge Laplacian spectrum for manifolds with boundary, but the following dual version of this problem, known as the relative eigenvalue problem that generalises the Dirichlet problem for functions, is also of significance: For $\om\in\Om^p(M)$,
\eqn{
\begin{cases}
\lap\p\om=\lm \om &\text{in $M$},\\
\iota^* \om=0 \text{ and } \iota^*\delta\om=0 &\text{on $\pa M$},
\end{cases}
}
where $\lm$ is the spectral parameter. It has a discrete spectrum
\eqn{
0=\releig_{0,1}=\dots=\releig_{0,b_p^\rel}<\releig_1\leq \releig_2\leq \dots\nearrow +\infty
}
with finite multiplicities. The dimension of the kernel, $b_p^\rel\in \{0\}\cup \NN$, is the relative Betti number of $(M,\pa M)$. The absolute and the relative eigenvalue problems are dual to each other and are connected by the Hodge star operator. Their spectra obey, for $p\in\{0,\dots,n\}$,
\eqn{
\abseig_{b_p+k}=\lm^{(n-p)}_{b_p^\rel+k},\qquad \forall \, k\in \NN.
}
The relative eigenvalue problem has analogous variational characterisation, and a spectral decomposition into the exact, coexact and harmonic forms with appropriate boundary conditions that can be deduced from a dual version of the Hodge decomposition stated above. Naturally, the lower bounds stated in this article for the absolute eigenvalues have counterparts for the relative eigenvalues.

\section{Lower bounds for the smallest positive eigenvalue of convex domains with a hole} \label{sec: main lower bds}

In this section, we prove Theorem~\ref{thm: first abs eigval lower bds for forms on annular} and Theorem~\ref{thm: neumann lower bd for func in terms of union}. Before that, we give an overview of the proof of Theorem~\ref{thm: first abs eigval lower bds for forms on annular}. We consider the two regimes---when the radius of the hole $R_h$ is large, or small, compared to the contact radius $R_c$. In the former case, we begin with an eigenform $\om$ of the first exact eigenvalue $\exeigpo$ and would like to find a primitive of it which satisfies a Poincar\'e-type inequality with a geometrically explicit constant, (the square of) whose inverse would serve as a lower bound for $\exeigpo$. Such an inequality is known for convex domains from the work of Chanillo-Treves~\cite[Lemma~2.1]{CT97}, and the geometric dependence of the constant there is suitable to bring out the role of $R_c$ by adapting it to our setting. We consider an open cover of our annular domain by dividing it into sufficiently small pieces that are preimages of tiny patches on the inner spherical boundary, under the orthogonal projection map of $\RR^n$ onto it. The boundary of each piece has a convex part, coming from the outer boundary of the initial domain, and a non-convex part that is a remanent of the inner boundary, in addition to the flat part of the boundary connecting these two. Now, we convexify each piece near the non-convex part using a quasi-isometry (see Figure~\ref{fig:quasi}), with explicit control on the distortion required to do so. We then apply \cite[Lemma~2.1]{CT97} to these convex domains that approximate our pieces to obtain local Poincar\'e-type inequalities for the pieces themselves, involving the restrictions of the eigenform $\omega$. However, the primitives thus obtained on these pieces do not necessarily agree on the intersections for them to be glued together. In order to obtain a global primitive, we use an appropriate partition of unity to first construct an approximate global $p$\nobreakdash-form $\nu$ from the primitives on the pieces. We then consider the difference $\om-d\nu$, and recursively update $\nu$ in $p$ steps to eventually obtain a more manageable expression for $\om-d\nu$ which involves the first derivatives of the partition of unity and certain constant functions on larger versions of the pieces (see Lemma~\ref{lem: tech lem for omega-d_eta} for a precise description). These constants, when viewed as a \v{C}ech cochain for a suitable open cover of the domain, are in fact a \v{C}ech cocycle. Furthermore, the exactness of $\omega$ implies that this system of constants is in fact a \v{C}ech coboundary. Using an explicit isomorphism between the \v{C}ech cocyles and the closed forms representing the de~Rham cohomology~\cite[Proposition~9.8]{bott-tu}, this allows us to further simplify the difference $\omega-d\nu$ and finally construct a primitive $\eta$ of $\omega$. Throughout this process, we keep track of the estimates and obtain a global Poincar\'e-type inequality as intended. In the latter regime of small holes, we proceed as follows. A spectral convergence result of Ann\'e--Colbois~\cite[Theorem~1.1]{AC93} for perforated domains with shrinking spherical holes gives explicit lower bounds when the domains are annuli whose inner and outer radii are chosen appropriately. When we have a generic domain with convex outer boundary and a spherical hole, we consider a $2$\nobreakdash-cover comprising an inner annulus of suitable dimensions and an outer annular region whose hole now has sufficiently large radius for the lower bounds from the earlier regime to be applicable. Finally, we apply to this $2$\nobreakdash-cover a gluing lemma of Cheeger--McGowan~\cite[Lemma~2.3]{mcg93} that, under certain topological assumptions which hold in our setting, gives lower bounds for the exact $p$\nobreakdash-spectrum of a domain in terms of the lower bounds on the exact $p$\nobreakdash-spectrum of the two pieces and the exact $(p-1)$\nobreakdash-spectrum of their intersection. This gluing procedure gives lower bounds on $\exeigpo$ for $p\in\{2,\dots,n-1\}$. For the case $p=1$ (which also corresponds to the Neumann eigenvalue for functions), the role of this gluing lemma is replaced by our Theorem~\ref{thm: neumann lower bd for func in terms of union}. We now proceed to the proof of Theorem~\ref{thm: first abs eigval lower bds for forms on annular}.

Let $\dom=\Dom\setminus \overline{\BnRh} $ be an annular domain in $\RR^n$, where $\Dom$ is a convex domain in $\RR^n$ and \mbox{$\BnRh\subsetneq\Dom$} is a Euclidean $n$\nobreakdash-ball of radius $\Rh$. Let $R_c$ and $D$ be the contact radius and the diameter of $\dom$, and $\SRh=\pa\BnRh$ denote the round sphere of radius $\Rh$. Fix $n\geq 2$ and $p\in \{1,\dots,n-1\}$. We divide the proof into two cases-- when $\Rh\geq \frac{R_c}{2}$, and when $\Rh< \frac{R_c}{2}$.
\parnoi
\textbf{Case 1: }$\Rh\geq \frac{R_c}{2}$.
\parnoi
Let $\om\in\Om^p(\dom)$ be an eigenform associated with $\exeigpo(\dom)$. Using the Hodge decomposition~\eqref{hodge decomp from taylor pde}, let $\alpha\df \delta \hat{\alpha}$ be the coexact $(p-1)$\nobreakdash-form such that $d\alpha=\om$, where $\hat{\alpha}$ satisfies the absolute boundary conditions $i_\normal \hat{\alpha}=0$ and $i_\normal d \hat{\alpha}=0$. So, we have from $\lap d\alpha=\exeigpo(\dom)d\alpha$ that $d(\lap \alpha-\exeigpo(\dom)\alpha)=0$. Further, $\lap \alpha-\exeigpo(\dom)\alpha=\delta(\lap \hat{\alpha}-\exeigpo(\dom)\hat{\alpha})$, with $i_\normal (\lap \hat{\alpha}-\exeigpo(\dom)\hat{\alpha})=0$ since $i_\normal \hat{\alpha}=0$, $i_\normal d \delta \hat{\alpha}=i_\normal \om=0$, and $i_\normal \delta d \hat{\alpha}=\delta i_\normal d \hat{\alpha}=0$. It follows from the following lemma that $\lap \alpha =\exeigpo(\dom) \alpha$, i.e., $\alpha$ is a coexact eigenform with the eigenvalue $\exeigpo(\dom)$.
\begin{lem}
For a compact Riemannian manifold  $(M,g)$ with boundary, if $\tilde{\om}\in \Om^p(M)$ is closed and coexact (say, $\tilde{\om}=\delta \hat{\om}$) with $i_\normal \hat{\om}=0$, then $\tilde{\om}=0$.
\end{lem}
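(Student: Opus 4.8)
The plan is to compute the $L^2$-norm of $\tilde{\om}$ directly and show it must vanish, using nothing more than Green's formula (Stokes' theorem) for differential forms on a manifold with boundary. Since $\tilde{\om}=\delta\hat{\om}$ with $\hat{\om}\in\Om^{p+1}(M)$, the starting point is
\eqn{
\nrm{\tilde{\om}}_{L^2\Om^p(M)}^2=\inn{L^2\Om^p(M)}{\tilde{\om}}{\delta\hat{\om}}.
}
First I would move the codifferential across the inner product: for $\tilde{\om}\in\Om^p(M)$ and $\hat{\om}\in\Om^{p+1}(M)$, Green's formula reads
\eqn{
\inn{L^2\Om^{p+1}(M)}{d\tilde{\om}}{\hat{\om}}=\inn{L^2\Om^p(M)}{\tilde{\om}}{\delta\hat{\om}}+\int_{\pa M}\langle \iota^*\tilde{\om},i_\normal\hat{\om}\rangle\,dA,
}
where $dA$ is the induced Riemannian volume element on $\pa M$.

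Two observations then close the argument. The boundary term is of \emph{absolute} type, involving $i_\normal\hat{\om}$, which vanishes by hypothesis; hence the boundary integral is zero regardless of $\iota^*\tilde{\om}$. And since $\tilde{\om}$ is closed, $d\tilde{\om}=0$, so the left-hand side of the displayed identity is also zero. Combining, $\inn{L^2\Om^p(M)}{\tilde{\om}}{\delta\hat{\om}}=\nrm{\tilde{\om}}_{L^2\Om^p(M)}^2=0$, and therefore $\tilde{\om}\equiv 0$. (For the degenerate degrees this is immediate: when $p=n$ one has $\hat{\om}\in\Om^{n+1}(M)=\{0\}$, so $\tilde{\om}=0$ trivially.)

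I do not anticipate a genuine obstacle here: the only points needing care are invoking Green's formula in the correct pair of adjacent degrees $(p,p+1)$, and recording that the single absolute-type condition $i_\normal\hat{\om}=0$ --- rather than any hypothesis on $\iota^*\tilde{\om}$, on $d\hat{\om}$, or on $\tilde{\om}$ beyond closedness --- is exactly what kills the boundary contribution. In particular, no eigenform property is used; closedness together with the coexact representation $\tilde{\om}=\delta\hat{\om}$ with $i_\normal\hat{\om}=0$ is all that is required, which is precisely the configuration arising in the Hodge-decomposition argument preceding the lemma.
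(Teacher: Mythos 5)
Your proof is correct, and it takes a more economical route than the paper's. You pair $\tilde\om$ with itself written as $\delta\hat\om$ and integrate by parts once: Green's formula in degrees $(p,p+1)$ gives $\inn{L^2\Om^p(M)}{\tilde\om}{\delta\hat\om}=\inn{L^2\Om^{p+1}(M)}{d\tilde\om}{\hat\om}-\int_{\pa M}\langle \iota^*\tilde\om,\,i_\normal\hat\om\rangle\,dA$, so closedness kills the interior term, the absolute-type hypothesis $i_\normal\hat\om=0$ kills the boundary term, and $\nrm{\tilde\om}_{L^2\Om^p(M)}^2=0$ follows immediately. The paper instead first invokes the Hodge decomposition $\tilde\om=d\tilde\beta+\delta\tilde\alpha+\tilde h$, uses closedness together with Green's identity to discard the coexact component $\delta\tilde\alpha$, and then applies Green's identity a second time to the pairing of $\delta\hat\om$ with $d\tilde\beta+\tilde h$, using $\delta\delta=0$, $d\tilde h=0$ and the same condition $i_\normal\hat\om=0$. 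Both arguments hinge on exactly the same mechanism\enspace--\enspace the boundary term in Green's formula involves $i_\normal\hat\om$, which vanishes by hypothesis\enspace--\enspace but yours dispenses with the Hodge decomposition altogether and needs only one integration by parts, while the paper's version merely makes explicit which Hodge components of $\tilde\om$ could survive (none). Your remark on the degenerate case $p=n$, where $\hat\om\in\Om^{n+1}(M)=\{0\}$, is a harmless addition.
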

\vspace{-1.2em}
\begin{proof}
From the Hodge decomposition~\eqref{hodge decomp from taylor pde}, we may write
\eqn{
\tilde{\om}=d\tilde{\beta}+\delta \tilde{\alpha} +\tilde{h},
}
where $\tilde{\alpha}$ is an exact form with $i_\normal \tilde{\alpha}=0$ and $\tilde h$ is a harmonic form satisfying the absolute boundary conditions (hence closed and coclosed). As $\tilde\om$ is closed, we have that $d\delta \tilde \alpha=0$, which, using Green's identity, implies
\eqn{
\inn{L^2\Om^p(M)}{\delta \tilde \alpha}{\delta \tilde \alpha}=\inn{L^2\Om^p(M)}{d \delta \tilde \alpha}{\tilde\alpha}-\inn{L^2\Om^p(\pa M)}{\iota^*\delta \tilde\alpha}{i_\normal d\tilde \alpha}=0,
}
hence the Hodge decomposition of $\tilde\om$ has no $\delta \tilde \alpha$ component. Now,
\begin{align*}
\inn{L^2\Om^p(M)}{\tilde\om}{\tilde\om}&=\inn{L^2\Om^p(M)}{\delta \hat \om}{d \tilde \beta + \tilde h}\\
&=\inn{L^2\Om^p(M)}{\delta \hat \om}{d \tilde \beta}+\inn{L^2\Om^p(M)}{\delta \hat \om}{\tilde h}\\
&=\inn{L^2\Om^p(M)}{\delta \delta \hat \om}{\tilde \beta}+\inn{L^2\Om^p(\pa M)}{\iota^*\tilde\beta}{i_\normal \delta\hat \om}+\inn{L^2\Om^p(M)}{\hat \om}{d \tilde h}-\inn{L^2\Om^p(\pa M)}{\iota^*\tilde h}{i_\normal \hat \om}\\
&=0.
\end{align*}
\end{proof}
\vspace{-1.5em}
\noi Hence,
\eqn{
\exeigpo(\dom)=\frac{\Vert d\alpha \Vert^2_{L^2\Om^p(\dom)}}{\Vert \alpha \Vert^2_{L^2\Om^{p-1}(\dom)}}.
}
It is difficult to directly find explicit geometric lower bounds for the term on the RHS, but we observe that $\alpha$ minimises the $L^2$\nobreakdash-norm among all $\eta \in \Om^{p-1}(\dom)$ such that $d \eta=\om$: If $\eta$ is any such form, then $\eta-\alpha$ is closed, so from the Hodge decomposition~\eqref{hodge decomp from taylor pde}, there exists a harmonic form $h$ satisfying the absolute boundary conditions $i_\normal h=0$ and $i_\normal dh=0$, and an exact form $\beta$ with appropriate boundary conditions, such that $\eta-\alpha=\beta+h$. Further, $\alpha$, $\beta$ and $h$ would be mutually orthogonal, and we have
\eqn{
\Vert\alpha\Vert_{L^2\Om^{p-1}(\dom))}\leq \Vert \eta\Vert_{L^2\Om^{p-1}(\dom))}.
}
This implies that
\eqnum{\label{flexible rayleight quotient lower bd}
\exeigpo(\dom)\geq \frac{\Vert \om \Vert^2_{L^2\Om^p(\dom))}}{\Vert \eta \Vert^2_{L^2\Om^{p-1}(\dom))}},
}
giving us the flexibility in terms of the choice of $\eta$. We adapt the methods from \cite{CT97} to construct a primitive $\eta\in \Om^{p-1}(\dom)$ (i.e., $d\eta=\om$) such that its $L^2$\nobreakdash-norm is controlled by that of $\om$ explicitly in terms of the geometry of $\dom$. More precisely, we prove the following:
\begin{prop}\label{prop: main upper bd on rayleigh}
There exists a primitive $\eta\in\Om^{p-1}(\dom)$ of $\om$ such that
\eqn{
\frac{\Vert \eta \Vert_{L^2\Om^{p-1}(\dom)}}{\omnorm} \leq K R_c \left(\frac{D}{R_c}\right)^{(n+1)p+\frac{n}{2}}\left(\RhRc\right)^{(n-1)(\frac{3p}{2}+2)+p}.
}
\end{prop}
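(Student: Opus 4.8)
The plan is to construct $\eta$ by a quantitative local-to-global argument modelled on Chanillo--Treves~\cite{CT97}. First I would cover $\dom$ by an explicit finite \emph{good cover} $\mathcal{U}=\{U_\alpha\}$, whose members and whose nonempty multiple intersections are each bi-Lipschitz to a Euclidean ball with controlled distortion; then solve the Poincar\'e lemma \emph{with explicit $L^2$ bounds} on each such piece; and finally assemble the local primitives into a single global $(p-1)$-form $\eta$ with $d\eta=\om$ by running the standard staircase through the C\v{e}ch--de Rham double complex attached to $\mathcal{U}$. Note that $\om$, being an exact eigenform, is closed and represents the zero de Rham class, which is what lets the staircase close up. The size of $\eta$ is then controlled by combining: the cardinality and the (bounded) multiplicity of $\mathcal{U}$; the $C^0$ and $C^1$ sizes of a partition of unity subordinate to $\mathcal{U}$; the local Poincar\'e constants on the cells and intersections (which scale with the diameters of the pieces and depend polynomially on their bi-Lipschitz distortions); and the combinatorics of the nerve of $\mathcal{U}$. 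The main difficulty will be the construction of $\mathcal{U}$ itself.

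\textbf{Step 1 (the adapted cover).} Because we are in Case~1, $\Rh\ge R_c/2$, the contact gap $R_c$ is at most a fixed multiple of the radius $\Rh$ of the hole, so a cover of mesh $\sim R_c$ near the hole already resolves $\SRh$ --- this is precisely what fails in Case~2 and forces a different argument there. I would start from a grid of $\Dom$ of mesh $\sim R_c$ near the hole (and possibly coarser, up to mesh $\sim D$, far from it), intersect it with $\dom$, reshape the cells meeting $\SRh$ into ``spherical cap $\times$ radial interval'' cells, and merge cells truncated by $\pa\Dom$ with their neighbours so as to keep them fat, using convexity of $\Dom$. After a slight fattening this should yield an open cover $\mathcal{U}$ with the following properties, all with constants depending only on $n$: the number of cells is $\lesssim (D/R_c)^n$, of which $\lesssim (\Rh/R_c)^{n-1}$ meet the hole; the multiplicity is bounded; each cell and each nonempty intersection is bi-Lipschitz to a ball with distortion bounded by a dimensional constant away from the hole and by $\lesssim \Rh/R_c$ for the cap-cells; and there is a subordinate partition of unity $\{\chi_\alpha\}$ with $\|\chi_\alpha\|_\infty\le 1$ and $\|\nabla\chi_\alpha\|_\infty\lesssim 1/R_c$. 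Since $\dom$ is homeomorphic to $\Snm\times[0,1]$ and the cap-cells already carry this topology, $\mathcal{U}$ is a good cover. \emph{This is the main obstacle:} the cells must be simultaneously fat (for good local Poincar\'e constants), few (for the combinatorics), of bounded multiplicity, have fat intersections, and adapt to both an arbitrary smooth convex outer boundary and a round inner boundary.

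\textbf{Step 2 (Poincar\'e with bounds and assembly).} On a domain $W$ that is $L$-bi-Lipschitz to a ball of radius $r$, every closed $L^2$ form $\xi$ of positive degree admits a primitive $\psi$ with $\|\psi\|_{L^2(W)}\le K\,r\,L^{a}\,\|\xi\|_{L^2(W)}$ for a dimensional exponent $a=a(n)$; this follows by transporting the explicit cone (homotopy) operator through the bi-Lipschitz chart and tracking the Jacobian factors. Applying this on every cell and every nonempty intersection, I would run the staircase: place $\om$ in $C^0(\mathcal{U},\Om^p)$, solve $d\eta^{(0)}_\alpha=\om$ on each cell; then $\delta\eta^{(0)}\in C^1(\mathcal{U},\Om^{p-1})$ is $d$-closed, so solve $d\eta^{(1)}=\delta\eta^{(0)}$ on the double intersections; and continue for $p$ descending steps, arriving at a C\v{e}ch $p$-cocycle valued in $\RR$. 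Because $\mathcal{U}$ is a good cover and $[\om]=0$, this cocycle is a C\v{e}ch coboundary, which can be written down with bounds depending on the nerve; peeling it off and climbing back up, using the partition of unity to invert the C\v{e}ch differential at each level --- which is where the $\|\nabla\chi_\alpha\|_\infty\lesssim 1/R_c$ factors, and further applications of the local Poincar\'e estimate, enter --- produces the global form $\eta$ with $d\eta=\om$.

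\textbf{Step 3 (bookkeeping).} It remains to multiply the contributions. The $O(p)$ descending steps contribute local Poincar\'e scales (between $R_c$ and $D$) and bi-Lipschitz distortions; the $O(p)$ ascending steps contribute factors $\lesssim 1/R_c$ from $\nabla\chi_\alpha$ together with further distortions; and the passages to and from the global form, along with the C\v{e}ch-coboundary step, require Cauchy--Schwarz over sums indexed by the $\lesssim (D/R_c)^n$ cells, respectively the $\lesssim (\Rh/R_c)^{n-1}$ cap-cells. The square-root losses in these finite-sum $L^2$ estimates are what produce the half-integer exponents $\tfrac n2$ and $\tfrac{3p}{2}$ in the statement. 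After a somewhat involved but entirely elementary accounting, the scale factors and cell counts should accumulate to the power $(n+1)p+\tfrac n2$ of $D/R_c$, the distortions and cap-cell counts to the power $(n-1)(\tfrac{3p}{2}+2)+p$ of $\Rh/R_c$, and a single overall factor $R_c$ should survive, giving the asserted bound. I would not reproduce the explicit constants.
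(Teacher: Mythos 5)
Your overall strategy (quantitative local Poincar\'e lemma on a good cover plus the C\v{e}ch--de~Rham staircase, closed up because $\om$ is exact) is indeed the strategy of the paper, but the two steps you yourself flag as delicate are genuine gaps, and they are precisely the points where the paper does something different. First, the cover. You propose a Whitney-type grid of $\lesssim (D/R_c)^n$ fat cells of bounded multiplicity, bi-Lipschitz to balls together with all their intersections, adapted simultaneously to an arbitrary smooth convex outer boundary and to the round hole; you concede this is ``the main obstacle'' and do not construct it, and the goodness claim (``since $\dom$ is homeomorphic to $\Snm\times[0,1]$ \dots\ $\mathcal U$ is a good cover'') is not an argument. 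The paper avoids this difficulty entirely: it covers $\dom$ by only $N\lesssim(\Rh/R_c)^{n-1}$ radial cells $U_i=\dom\cap\proj^{-1}(\calB_i)$ over geodesically convex caps $\calB_i\subset\SRh$ of radius $\sim R_c$; these cells are long and thin (diameter up to $D$, width $\sim R_c$), so they are \emph{not} uniformly bi-Lipschitz to balls, and instead of a bi-Lipschitz chart the paper builds an explicit quasi-isometry $\Psi$ (Jacobian within $\tfrac12$ of the identity) onto the convex hull and then invokes the convex-domain inequality of Chanillo--Treves with constant $\diam^{n+1}/\vol\le K D^{n+1}/R_c^{n}$. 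Goodness of the cover is then automatic from geodesic convexity of the caps.

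Second, and more seriously, your assembly step asserts that the terminal C\v{e}ch $p$-cocycle of constants ``is a C\v{e}ch coboundary, which can be written down with bounds depending on the nerve.'' Exactness of $\om$ gives the existence of a coboundary, but the proposition requires a \emph{quantitative} right inverse of the C\v{e}ch differential, and its norm is not controlled by the cardinality and multiplicity of the cover alone; it depends on the combinatorial filling properties of the nerve. The paper gets this bound ($\sum_H|c_H|\le K N^{\frac p2+1}\sum_{I\in S_p}|\eta_I|$, with $|\eta_I|$ controlled via $\vol(U_I^1)\ge KR_c^n$) by following the explicit construction of Chanillo--Treves for precisely this projection cover of the sphere; with your cover of $\lesssim(D/R_c)^n$ cells you would need to exhibit and estimate an analogous explicit solution, which you do not do. Finally, the bookkeeping in your Step~3 is only asserted (``should accumulate to'' the stated powers); since your local Poincar\'e constants, cell counts and coboundary constants all differ from the paper's, there is no verification that the exponents $(n+1)p+\tfrac n2$ and $(n-1)(\tfrac{3p}{2}+2)+p$ (or anything at most as large) actually come out. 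As it stands the proposal is a plausible programme rather than a proof: the adapted cover, the quantitative coboundary inversion on its nerve, and the exponent accounting all remain to be supplied.
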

\noi
We would then conclude from~\eqref{flexible rayleight quotient lower bd} that, when $\Rh\geq \frac{R_c}{2}$
\eqn{
\exeigpo(\dom)\geq \frac{\Vert \om \Vert^2_{L^2\Om^
p(\dom)}}{\Vert \eta \Vert^2_{L^2\Om^{p-1}(\dom)}} \geq K \frac{1}{R_c^2}\left(\frac{R_c}{D}\right)^{2(n+1)p+n}\left(\frac{R_c}{\Rh}\right)^{(3n-1)p+4(n-1)}.
}
\parnoi
Let us proceed with the proof of Proposition~\ref{prop: main upper bd on rayleigh}. We utilise the following estimate for convex domains, by applying it to sufficiently small pieces of the annular domain that are convexified with a control on the volumes and the diameters. We then use these local estimates to deduce a global Poincar\'e-type inequality, using an explicit isomorphism between \v{C}ech and de~Rham cohomology.
\begin{lem}{\cite[Lemma~2.1]{CT97}}
For a bounded, convex domain $\mathcal{D}\in\RR^n$, there exists a constant $K(n)>0$ such that, given any closed form $\tilde\om\in \Om^p(\overline{\mathcal{D}})$, for $p\in\{1,\dots,n\}$, there exists a solution $\tilde\phi\in\Om^{p-1}(\overline{\mathcal{D}})$ to the equation $d\tilde\phi=\tilde\om$ that satisfies
\eqn{
\Vert\tilde\phi\Vert_{L^2\Om^{p-1}(\mathcal{D})}\leq K \frac{\diam(\mathcal{D})^{n+1}}{\vol(\mathcal{D})}\Vert\tilde\om \Vert_{L^2\Om^p(\mathcal{D})}.
}
\end{lem}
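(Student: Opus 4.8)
The plan is to construct $\tilde\phi$ from an \emph{averaged cone (Poincar\'e) homotopy operator} on $\mathcal{D}$ and then to reduce the required $L^2$ estimate to the elementary boundedness on $\mathcal{D}$ of an integral operator with kernel comparable to $|x-z|^{1-n}$. Since $\mathcal{D}$ is convex it is star-shaped with respect to each of its points, so for every $y\in\mathcal{D}$ the cone operator
\[
(h_y\tilde\om)(x)=\int_0^1 t^{p-1}\,\big(\iota_{x-y}\tilde\om\big)\big((1-t)y+tx\big)\,dt
\]
(here $\iota_{x-y}$ is interior multiplication by the constant vector $x-y$) is well defined and smooth on $\overline{\mathcal{D}}$, because the segment $[y,x]$ lies in $\overline{\mathcal{D}}$ and $\tilde\om\in\Om^p(\overline{\mathcal{D}})$. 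It satisfies the homotopy identity $d(h_y\tilde\om)+h_y(d\tilde\om)=\tilde\om$ for $p\geq 1$ (the pullback-by-a-constant-map term is absent since $p\geq 1$), so closedness of $\tilde\om$ gives $d(h_y\tilde\om)=\tilde\om$.

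I would then define
\[
\tilde\phi\df\frac{1}{\vol(\mathcal{D})}\int_{\mathcal{D}}(h_y\tilde\om)\,dy\ \in\ \Om^{p-1}(\overline{\mathcal{D}}),
\]
which is again smooth up to the boundary — all the integrands in sight are uniformly bounded on the compact set $\overline{\mathcal{D}}\times\overline{\mathcal{D}}\times[0,1]$ — and, $d$ being linear and $d\tilde\om=0$, still satisfies $d\tilde\phi=\tilde\om$. Averaging over \emph{all} of $\mathcal{D}$ against the normalised uniform density (rather than over a fixed inscribed ball, as in the Iwaniec--Lutoborski construction) is precisely what convexity allows, and it is what produces the factor $\vol(\mathcal{D})^{-1}$ directly. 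Next I would turn $\tilde\phi$ into an explicit integral operator applied to $\tilde\om$: for each fixed $t\in(0,1)$ substitute $z=(1-t)y+tx$ in the $y$-variable, so $dy=(1-t)^{-n}\,dz$, $x-y=(x-z)/(1-t)$, and $z$ runs over $(1-t)\mathcal{D}+tx\subset\overline{\mathcal{D}}$; then exchange the order of integration (Fubini). This gives
\[
\tilde\phi(x)=\frac{1}{\vol(\mathcal{D})}\int_{\mathcal{D}}\big(\iota_{x-z}\tilde\om(z)\big)\left(\int_{I(x,z)}\frac{t^{p-1}}{(1-t)^{n+1}}\,dt\right)dz,\qquad I(x,z)=\Big\{t\in(0,1):\ x+\tfrac{z-x}{1-t}\in\mathcal{D}\Big\}.
\]

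Convexity shows that $I(x,z)$ is an interval $[0,t^\ast)$ whose endpoint obeys $1-t^\ast\geq |x-z|/\diam(\mathcal{D})$: the point $x+\tfrac{z-x}{1-t}$ starts at $z$, moves along the ray from $x$ through $z$ away from $x$, and while it stays in $\mathcal{D}$ it remains within distance $\diam(\mathcal{D})$ of $x\in\overline{\mathcal{D}}$; since its distance from $x$ equals $|x-z|/(1-t)$, the estimate follows. Using $t^{p-1}\leq 1$, the inner integral is at most $\tfrac1n(1-t^\ast)^{-n}\leq \tfrac1n\diam(\mathcal{D})^n|x-z|^{-n}$, and combined with $|\iota_{x-z}\tilde\om(z)|\leq |x-z|\,|\tilde\om(z)|$ this yields the pointwise bound
\[
|\tilde\phi(x)|\leq \frac{\diam(\mathcal{D})^n}{n\,\vol(\mathcal{D})}\int_{\mathcal{D}}\frac{|\tilde\om(z)|}{|x-z|^{n-1}}\,dz .
\]

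Finally, since $|x-z|\leq\diam(\mathcal{D})$ on $\mathcal{D}\times\mathcal{D}$, the kernel $|x-z|^{1-n}\mathbf{1}_{\mathcal{D}\times\mathcal{D}}$ has both row and column integrals bounded by $\int_{|w|\leq\diam(\mathcal{D})}|w|^{1-n}\,dw=|\Snm|\,\diam(\mathcal{D})$, so by the Schur test it defines an operator on $L^2(\mathcal{D})$ of norm at most $|\Snm|\,\diam(\mathcal{D})$. Applying this to $|\tilde\om|$ and inserting it into the pointwise bound gives
\[
\Vert\tilde\phi\Vert_{L^2\Om^{p-1}(\mathcal{D})}\leq \frac{|\Snm|}{n}\,\frac{\diam(\mathcal{D})^{n+1}}{\vol(\mathcal{D})}\,\Vert\tilde\om\Vert_{L^2\Om^p(\mathcal{D})},
\]
which is the claim with $K=|\Snm|/n$. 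I expect the only genuinely delicate step to be the change-of-variables and Fubini bookkeeping together with the verification that $I(x,z)=[0,t^\ast)$ with $1-t^\ast\geq|x-z|/\diam(\mathcal{D})$; the homotopy identity and the Schur estimate are standard.
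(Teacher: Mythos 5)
Your proof is correct: the averaged cone homotopy, the change of variables leading to the kernel bound $K\,\diam(\mathcal{D})^n\vol(\mathcal{D})^{-1}|x-z|^{1-n}$, and the Schur test all check out, and the constant $|\Snm|/n$ gives the stated inequality. The paper itself does not prove this statement — it quotes it from \cite{CT97} — and your argument is essentially the proof given there (the Cartan/Iwaniec--Lutoborski averaged Poincar\'e operator with a Riesz-kernel estimate), so there is nothing to add.
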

\noi 
The proof of this lemma from~\cite{CT97} also holds for domains which are piecewise smooth.
\begin{figure}[H]
    \centering
    \includegraphics[width=0.6\linewidth]{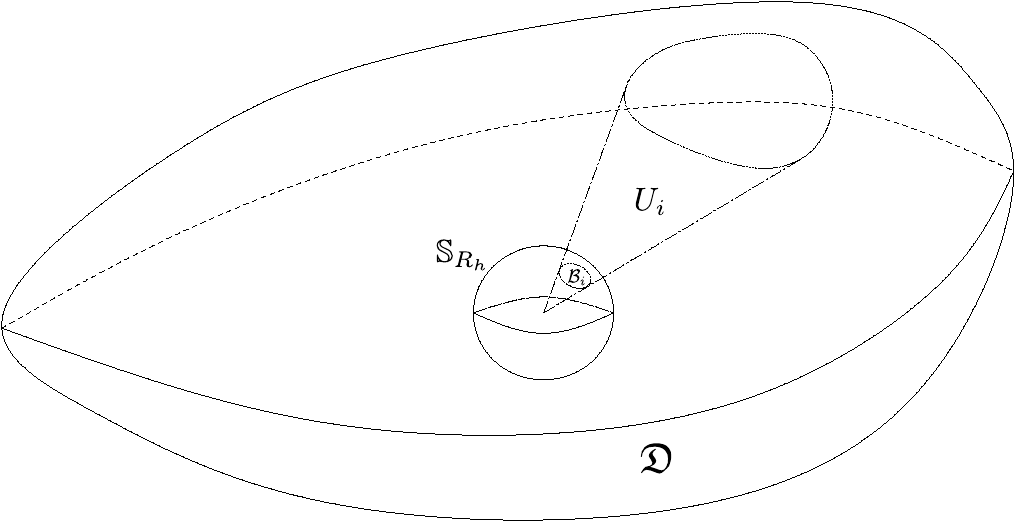}
    \caption{}
\end{figure}
\parnoi
For convenience, we choose the centre of $\SRh$ to be the origin. Define
\eqn{
r_0\df KR_c,
}
for sufficiently small $K$ described later by~\eqref{r_0}. Choose an open cover of $\SRh$ given by balls $\{\calB_i\}_{i=1}^N$ of radii $\frac{r_0}{4^n}$ on $\SRh$, with their centers $\{t_i\}_{i=1}^N$ at least $\frac{r_0}{4^n}$ apart mutually with respect to the round metric. Observe that
\eqnum{\label{bound on N}
N\leq \frac{\vol(\SRh)}{\vol(\half\calB_i)} \leq K\left(\Rhrc\right)^{n-1}\leq K \left(\RhRc\right)^{n-1},
}
where $\half \calB_i$ denotes the ball on $\SRh$ concentric to $\calB_i$ with half its radius. Now, $\{U_i\df \dom\cap\proj^{-1}(\calB_i)\}_{i=1}^N$ will be an open cover for $\dom$, where $\proj$ is the orthogonal projection from $\RR^n$ onto $\SRh$. Our choice of $r_0$ above also ensures that $\{\calB_i\}$ are geodesically convex, and so are all the nontrivial intersections $\cap_{k=1}^{k_0} \calB_{i_k}$, for $k_0\in \{1,\dots,N\}$. It then follows that $\{U_i\}_{i=1}^N$ is a \emph{good cover} of $\dom$, i.e., all the nontrivial intersections $\cap_{k=1}^{k_0} U_{i_k}$ are contractible for all $k_0\in \{1,\dots,N\}$. We use this property later while invoking the \v{C}ech cohomology. Furthermore, we have, for $i\in\{1,\dots,N\}$,
\eqnum{\label{vol lower bound}
KD^n\geq \vol(U_i) \geq KR_c r_0^{n-1}\geq K R_c^n.
}
For $q\in \{0,\dots,N-1\}$ and a multi-index $I=(i_0,\dots,i_q)$ such that $1\leq i_0<\dots<i_q\leq N$, denote by $U_I$ the intersection~$\cap_{k=0}^q U_{i_k}$. Let $S_q$ be the set of such multi-indices of length $q+1$ with~$U_I\neq \varnothing$. Note that
\eqn{
\card(S_q)\leq N(N-1)\dots (N-q)\leq N^{q+1}.
}
Define, for $\lm\in\{0,\dots,n\}$,
\eqn{
U_i^\lm\df\dom\cap\proj^{-1}\left(\calB \Big(t_i,\frac{4^\lm}{4^n}r_0\Big)\right),
}
and
\eqn{
U_I^\lm\df U_{i_0}^\lm \cap \dots \cap U_{i_q}^\lm,
}
for each $I=(i_0,\dots,i_q)\in S_q$. We have that, for $\lm\in\{0,\dots,n-1\}$,
\eqn{
U_{i_k}^\lm\subset U_I^{\lm+1},\qquad \forall \, k\in\{0,\dots,q\}, \forall \, I \in S_q.
}
Let $\{\tilde\varphi_i\}_{i=1}^N$ be a partition of unity subordinate to the cover $\{\calB_i\}_{i=1}^N$ of $\SRh$ such that $\Vert d\tilde\varphi_i\Vert_\infty\leq \frac{K}{r_0}$ for all $i\in\{1,\dots,N\}$, for some constant $K(n)$. This gives rise to a partition of unity $\{\varphi_i\df \tilde\varphi_i\circ \proj\}_{i=1}^N$ subordinate to the cover $\{U_i\}_{i=1}^N$ of $\dom$, also satisfying the bound $\Vert d\varphi_i\Vert_\infty\leq \frac{K}{R_c}$ for all $i\in\{1,\dots,N\}$.
\begin{lem}
Let $\lm\in\{1,\dots,n\}$. There exists a constant $K(n)$ such that, given an exact form $\tilde\om\in\Om^p\left(\overline{U_I^\lm}\right)$, there exists $\tilde\phi\in\Om^p\left(\overline{U_I^\lm}\right)$ such that $d\tilde\phi=\tilde\om$ on $U_I^\lm$ and
\eqnum{\label{control for local solution}
\Vert \tilde\phi \Vert_{L^2\Om^{p-1}(U_I^\lm)} \leq K \frac{D^{n+1}}{R_c^n}\Vert \tilde\om \Vert_{L^2\Om^p(U_I^\lm)}. 
}
\end{lem}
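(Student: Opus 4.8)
The plan is to reduce this to the convexified version of the Chanillo--Treves estimate (Lemma~2.1 of \cite{CT97}, as already recalled in the excerpt), by exhibiting each piece $U_I^\lm$ as the image of a genuinely convex domain under a bi-Lipschitz diffeomorphism whose distortion is controlled purely by $n$. Concretely, each $U_I^\lm = \dom \cap \proj^{-1}(B(\cdot))$ is a union of radial segments over a geodesic ball on $\SRh$; since the ball on the sphere has radius at most $r_0 = KR_c \ll R_c$ and the radial extent of $\dom$ over that ball is comparable to the contact radius near the hole (and at most $D$ overall), I would first straighten out the radial direction: use polar-type coordinates $(\theta, t) \in \mathcal{B} \times [\rho_{\mathrm{in}}(\theta), \rho_{\mathrm{out}}(\theta)]$ where $\rho_{\mathrm{in}} \equiv \Rh$ on the inner sphere and $\rho_{\mathrm{out}}$ is the radial graph of the outer convex boundary. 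The map $(\theta,t)\mapsto (\theta, (t-\rho_{\mathrm{in}})/(\rho_{\mathrm{out}}-\rho_{\mathrm{in}}))$ sends $U_I^\lm$ onto a product $\mathcal{B}_I^\lm \times [0,1]$, and because $\mathcal{B}_I^\lm$ is geodesically convex on the sphere with small radius it is itself, after the standard normal-coordinate chart, bi-Lipschitz to a convex Euclidean domain with distortion bounded by $n$.

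Next I would track the Jacobian and Lipschitz constants of this composite diffeomorphism $\Phi_I^\lm$. The pullback $(\Phi_I^\lm)^*$ on $p$-forms changes $L^2$ norms by at most $(\mathrm{Lip}\,\Phi)^{p}\cdot(\text{Jacobian factors})$, and similarly for $d$; the key point is that $d$ commutes with pullback, so if $\tilde\psi$ solves $d\tilde\psi = (\Phi_I^\lm)^*\tilde\om$ on the convex model with the Chanillo--Treves bound, then $\tilde\phi \df ((\Phi_I^\lm)\inv)^*\tilde\psi$ solves $d\tilde\phi = \tilde\om$ on $U_I^\lm$. The bound from Lemma~2.1 of \cite{CT97} on the model is $K\,\diam^{n+1}/\vol$ of the model domain; pulling back and combining with the distortion estimates, the diameter of the model is $O(D)$ (from the radial extent, rescaled) and its volume is bounded below by $K R_c^n$ — here I use exactly the volume lower bound~\eqref{vol lower bound} established just above, together with the fact that distortion is dimension-controlled. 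This yields the claimed constant $K D^{n+1}/R_c^n$ after absorbing all the $n$- and $p$-dependent distortion powers into $K$. Note the hypothesis requires $\lm \ge 1$, which guarantees the radius $4^\lm r_0/4^n$ is large enough that $U_I^\lm$ genuinely contains a radial neighborhood (so the volume lower bound and the radial straightening are non-degenerate).

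The main obstacle is controlling the geometry of $U_I^\lm$ uniformly: the outer boundary radial function $\rho_{\mathrm{out}}$ is only known to be convex (its graph is part of $\pa\Dom$), not smooth with controlled derivatives, and over a ball $\mathcal{B}_I^\lm$ of radius comparable to $R_c$ the variation of $\rho_{\mathrm{out}}$ could a priori be as large as $D$, making the "product" picture $\mathcal{B} \times [0,1]$ badly distorted. The resolution is that convexity of $\Dom$ forces $\rho_{\mathrm{out}}$ to be Lipschitz on the relevant angular scale with constant controlled by $D/R_c$ (since the region is at distance $\ge R_c$ from where the outer boundary could turn sharply — this is precisely what the contact radius buys us), so the distortion of $\Phi_I^\lm$ is polynomial in $D/R_c$; but such a factor is already present and larger in the target bound's $D^{n+1}/R_c^n$, so it is harmless. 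A secondary technical point is that $\Phi_I^\lm$ is only piecewise smooth (the inner part is spherical, the outer is the convex graph), but the remark following Lemma~2.1 in the excerpt already notes that the Chanillo--Treves estimate holds for piecewise smooth domains, so no difficulty arises there. Once the diffeomorphism and its distortion bounds are in hand, the proof is a routine change-of-variables computation.
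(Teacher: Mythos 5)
Your overall reduction\enspace--\enspace map $U_I^\lm$ onto a convex model by a controlled diffeomorphism, solve there with the Chanillo--Treves estimate, and pull back using the volume lower bound \eqref{vol lower bound}\enspace--\enspace is exactly the strategy of the paper, but the specific map you choose cannot deliver the stated constant, and the step where you dismiss the loss is wrong. Your radial straightening $(\theta,t)\mapsto\bigl(\theta,(t-\rho_{\mathrm{in}})/(\rho_{\mathrm{out}}-\rho_{\mathrm{in}})\bigr)$ is strongly anisotropic: the radial scale factor $(\rho_{\mathrm{out}}-\rho_{\mathrm{in}})^{-1}$ varies between roughly $1/D$ and $1/R_c$ across the piece, and differentiating the quotient brings in $\nabla_\theta\rho_{\mathrm{out}}$, so, as you yourself concede, the bi-Lipschitz distortion of your map is only polynomial in $D/R_c$, not bounded by a constant $K(n)$. (Your bookkeeping is also internally inconsistent: after rescaling the radial direction to $[0,1]$ the model's diameter is no longer $O(D)$.) Since the pullback on $p$-forms multiplies $L^2$ norms by powers of this distortion, what your argument yields is at best $\Vert\tilde\phi\Vert\leq K\,(D/R_c)^{m}\,\frac{D^{n+1}}{R_c^n}\Vert\tilde\om\Vert$ for some $m=m(n,p)>0$. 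Because $D/R_c\geq 1$, such factors cannot be ``absorbed into $K$'', and the resulting inequality is strictly weaker than \eqref{control for local solution}; it would also propagate into worse exponents in Proposition~\ref{prop: main upper bd on rayleigh} and Theorem~\ref{thm: first abs eigval lower bds for forms on annular}. The claim that the extra factor is harmless ``because it is already present and larger in the target bound'' has the logic backwards: proving an inequality with a larger right-hand side does not establish the stated one.

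The paper avoids any such loss by not straightening the whole radial extent. Its map $\Psi=\alpha(R)\Phi+(1-\alpha(R))\,\id$ flattens only the inner spherical cap, is the identity beyond distance $\tfrac{2R_c}{3}$ from the hole (so the convex outer boundary is untouched and the image $(U_I^\lm)'$ is the convex hull of $U_I^\lm$), and the angular scale $\theta_0\sim R_c/\Rh$\enspace--\enspace this is precisely where $r_0=KR_c$ comes from\enspace--\enspace is chosen so that $\Vert\jac\Psi-\id\Vert_{\op}\leq\tfrac12$ and likewise for $\Psi\inv$. Hence volumes and $L^2$ norms of forms change only by dimensional constants, $\diam((U_I^\lm)')=\diam(U_I^\lm)\leq D$, and $\vol((U_I^\lm)')\geq K\vol(U_I^\lm)\geq KR_c^n$, which gives exactly $KD^{n+1}/R_c^n$. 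The observation you are missing is that the only source of non-convexity of $U_I^\lm$ is the inner spherical cap, which at angular radius $\lesssim R_c/\Rh$ is an $O(1)$-small perturbation of a flat face; only that cap needs to be modified, so a distortion bound depending solely on $n$ is attainable, whereas any construction that renormalises the full radial thickness necessarily pays in powers of $D/R_c$.
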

\begin{proof}
For $t\in \SRh$, set
\eqn{
\calO_\theta\df \dom\cap\proj^{-1}\left(\calB(t,\Rh\theta)\right),\qquad \theta\in(0,\frac{\pi}{2}).
}
\begin{figure}[H]
    \centering
    \includegraphics[width=0.5\linewidth]{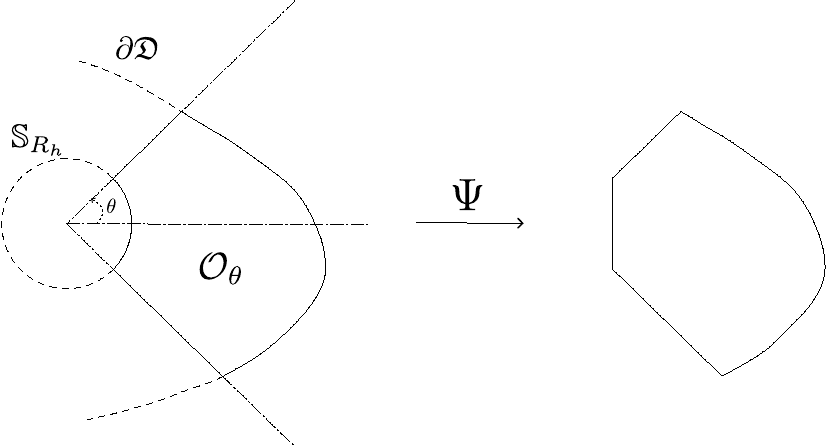}
    \caption{}
    \label{fig:quasi}
\end{figure}
\noi
We construct a quasi-isometry as follows, that transforms a sufficiently small piece of $\dom$ into a convex domain with control on the Jacobian of this transformation. Choose a coordinate system whose origin is at the centre of $\BnRh$ and such that the point $t$ corresponds to $(\Rh,0,\dots,0)$. Define
\begin{align*}
\Phi : \calO_\theta&\to \RR^n\\
(x_1,\dots,x_n)&\mapsto\frac{R\cos \theta}{x_1}
(x_1,\dots,x_n),
\end{align*}
where $R=\sqrt{\sum_{i=1}^n x_i^2}$. Let $\alpha:[\Rh,\infty)\to [0,1]$ be a smooth cutoff function that is equal to $1$ on $[\Rh,\Rh+\frac{R_c}{3}]$ and vanishing on $[\Rh+\frac{2R_c}{3},\infty)$, whose gradient is controlled by $|\alpha'|\leq \frac{K}{R_c}$. The required quasi-isometry is then given by
\eqn{
\Psi(x_1,\dots,x_n)\df \alpha(R) \Phi(x_1,\dots,x_n)+(1-\alpha(R))(x_1,\dots,x_n).
}
We have, for $k,l\in\{1,\dots,n\}$,
\eqn{
\frac{\pa \Psi_k}{\pa x_l}= \alpha(R)\frac{\pa \Phi_k}{\pa x_l}(x_1,\dots,x_n)+\alpha'(R)\frac{x_l}{R}\Phi_k(x_1,\dots,x_n)+(1-\alpha(R))\delta_{kl}-\alpha'(R)\frac{x_lx_k}{R}.
}
Substituting the expression for $\Phi$ and using that
\begin{align*}
x_1&=R\cos(\theta(x))\geq R\cos \theta,\\
|x_i|&\leq R \sin(\theta(x)) \leq R\sin \theta, \qquad \forall \, i\in \{2,\dots,n\},\\
\alpha&=\alpha'\equiv 0 \qquad \text{on }\left[\Rh+\frac{2R_c}{3},\infty\right),
\end{align*}
one can deduce that
\eqn{
\Vert \jac\Psi-\id_n \Vert_{\max} \leq K\left(1+\RhRc\right)\theta\leq K_0 \RhRc \theta.
}
By the equivalence of the max norm and the operator norm for matrices, let $K_1(n)\geq 1$ be a constant such that
\eqn{
K_1^{-1}\Vert.\Vert_{\max}\leq \Vert.\Vert_{\op}\leq  K_1\Vert.\Vert_{\max}.
}
Set
\eqn{
\theta_0\df \frac{1}{3K_1K_0\RhRc},
}
requiring without loss of generality that the RHS is less than $\frac{\pi}{2}$.
Then, for $\theta\in(0,\theta_0]$,
\eqn{
\Vert \id-\jac \Psi\Vert_{\op}<1,
}
hence the series $\sum_{k=0}^\infty(\id-\jac\Psi)^k$ converges to $(\id-(\id-\jac \Psi))^{-1}=(\jac \Psi)^{-1}=\jac\Psi^{-1}$. That is,
\eqn{
\jac\Psi^{-1}=\id+(\id-\jac \Psi)+(\id-\jac \Psi)^2+\dots,
}
and
\begin{align*}
\Vert \jac \Psi\inv-\id\Vert_{\op}&=\Vert (\id-\jac \Psi)+(\id-\jac \Psi)^2+\dots\Vert_{\op}\\
&\leq \Vert \id-\jac \Psi \Vert_{\op}+\Vert \id-\jac \Psi \Vert_{\op}^2+\dots\\
& \leq K_1K_0\RhRc\theta+\left(K_1K_0\RhRc\theta\right)^2+\dots\\
&=\frac{K_1K_0\RhRc\theta}{1-K_1K_0\RhRc\theta}\\
&\overset{\text{using }\theta\leq\theta_0}{\leq}\half.
\end{align*}
Thus,
\eqnum{\label{jacobian control}
\max\left\{\Vert \jac \Psi-\id\Vert_{\op}, \Vert \jac \Psi\inv-\id\Vert_{\op}\right\}\leq\half,\qquad \forall \,\theta\in(0,\theta_0].
}
Set
\eqnum{\label{r_0}
r_0\df\Rh\theta_0=K R_c,
}
so that $r_0<\frac{\pi}{2}\Rh$ and the balls $\calB(t_i,r_0)$ are geodesically convex on $\SRh$.
\parnoi
Let $(U_I^\lm)'\subset \RR^n$ be the image of $U_I^\lm\subset \calO_{\theta_0}$ under the diffeomorphism $\Psi$. We have
\begin{align*}
\vol(U_I^\lm)&=\int_{U_I^\lm}\dv\\
&=\int_{(U_I^\lm)'}|\det(\jac\Psi\inv)|\,\dv\\
&\leq\Vert\jac \Psi\inv\Vert_{\op}^n \int_{(U_I^\lm)'}\dv\\
&\overset{\text{using~}\eqref{jacobian control}}{\leq} \left(\frac{3}{2}\right)^n \vol((U_I^\lm)').
\end{align*}
We also have that $\diam((U_I^\lm)')=\diam(U_I^\lm)$, since $(U_I^\lm)'$ is in fact the convex-hull of $U_I^\lm$.
\parnoi
We recall~\cite[Lemma~2.1]{CT97}: For a bounded, convex domain $\mathcal{D}\in\RR^n$, there exists a constant $K(n)>0$ such that, given any closed form $\tilde\om\in \Om^p(\overline{\mathcal{D}})$, for $p\in\{1,\dots,n\}$, there exists a solution $\tilde\phi\in\Om^{p-1}(\overline{\mathcal{D}})$ to the equation $d\tilde\phi=\tilde\om$ that satisfies
\eqn{
\Vert\tilde\phi\Vert_{L^2\Om^{p-1}(\mathcal{D})}\leq K \frac{\diam(\mathcal{D})^{n+1}}{\vol(\mathcal{D})}\Vert\tilde\om \Vert_{L^2\Om^p(\mathcal{D})}.
}
Applying this to the exact form $(\Psi\inv)^*\tilde\om$, the pullback of $\tilde\om$ with respect to $\Psi\inv:(U_I^\lm)'\to U_I^\lm$, there exists $\hat \phi\in \Om^{p-1}((U_I^\lm)')$ with $d\hat\phi=(\Psi\inv)^*\tilde\om$ such that
\eqn{
\left(\int_{(U_I^\lm)'} |\hat\phi|^2\,\dv\right)^\half\leq K \frac{\diam((U_I^\lm)')^{n+1}}{\vol((U_I^\lm)')}\left(\int_{(U_I^\lm)'} |(\Psi\inv)^*\tilde\om|^2\,\dv\right)^\half.
}
Choose $\tilde\phi=\Psi^*\hat\phi$. Using diameter and volume comparison from before, in addition to the control on the Jacobian, we get
\begin{align*}
\left(\int_{U_I^\lm} |\tilde \phi|^2\,\dv\right)^\half &= \left(\int_{(U_I^\lm)'} |\det\jac \Psi\inv|\,|(\Psi\inv)^*\tilde \phi|^2\,\dv\right)^\half\\
&\leq \left(\frac{3}{2}\right)^{\frac{n}{2}} \left(\int_{(U_I^\lm)'} |\hat\phi|^2\,\dv\right)^\half\\
&\leq K \frac{\diam((U_I^\lm)')^{n+1}}{\vol((U_I^\lm)')}\left(\int_{(U_I^\lm)'} |(\Psi\inv)^*\tilde\om|^2\,\dv\right)^\half\\
&\leq K \frac{\diam(U_I^\lm)^{n+1}}{\vol(U_I^\lm)}\left(\int_{U_I^\lm} |\det \jac\Psi|\, |\tilde\om|^2\,\dv\right)^\half\\
&\leq K \frac{\diam(\dom)^{n+1}}{R_c^n} \left(\int_{U_I^\lm} |\tilde\om|^2\,\dv\right)^\half,
\end{align*}
where we use that, for $\lm\in\{1,\dots,n\}$,
\eqn{
\vol(U_I^\lm)\geq \vol(U_j))\overset{\text{by }\eqref{vol lower bound}}{\geq} K R_c^n.
}Hence the lemma.
\end{proof}
\noi
Let $\vareps(i,I\setminus i)$ denote the sign of the permutation that orders $(i,I\setminus i)$.
\begin{lem}\label{lem: tech lem for omega-d_eta}
There exists $\nu\in\Om^{p-1}(\dom)$, and, 
for every $I\in S_p$, a constant function $\eta_I$ on $U_I^1$ such that
\eqnum{\label{cech-derham isom expression}
\om-d\nu=-\sum_{I\in S_p}\left(\sum_{i\in I}\vareps(i,I\setminus i)\varphi_i\,d\varphi_{I\setminus i}\right)\eta_I,
}
with
\begin{gather}
\Vert \nu\Vert_{L^2\Om^{p-1}(\dom)}\leq K R_c \left(\frac{D}{R_c}\right)^{(n+1)p}\left(\RhRc\right)^{(n-1)p}\omnorm,\\
\left(\vol(U_I^1)\right)^\half|\eta_I|=\Vert \eta_I \Vert_{L^2(\dom)}\leq K \left(\frac{D^{n+1}}{R_c^n}\right)^p \omnorm, \qquad \text{and} \label{ineq with vol term}\\
\sum_{j\in J}\vareps(j,J\setminus j)\, \eta_{J\setminus j}=0, \qquad \forall\, J\in S_{p+1}.\label{cech cocycle condition}
\end{gather}
Here, $d\varphi_K$ denotes the $(k+1)$\nobreakdash-form $d\varphi_{i_0}\wedge\dots\wedge d\varphi_{i_k}$ for a multi-index $K=(i_0,\dots,i_k)$. 
\end{lem}
\begin{proof}
Adapting the construction from \cite[Section~2]{CT97}, we prove the following property by induction: For any $q\in \{1,\dots,p\}$, there exists a closed form $\eta_I\q \in\Om^{p-q}(U_I^{p-q+1})$ for each $I\in S_q$, and $\nu\q \in \Om^{p-1}(\dom)$ such that
\begin{gather}
\om-d\nu\q=-\sum_{I\in S_q}\left(\sum_{i\in I}\vareps(i,I\setminus i)\varphi_i d\varphi_{I\setminus i}\right)\wedge \eta_I\q,\label{indn prop1}\\
\Vert \nu\q\Vert_{L^2\Om^{p-1}(\dom)}\leq K R_c \left(\frac{D}{R_c}\right)^{(n+1)q}\left(\RhRc\right)^{(n-1)q}\omnorm,\label{indn prop2}\\
\Vert \eta_I\q \Vert_{L^2\Om^{p-q}(U^{p-q+1})}\leq K \left(\frac{D^{n+1}}{R_c^n}\right)^q \omnorm,\label{indn prop3}
\intertext{and}
\sum_{j\in J}\vareps(j,J\setminus j) \,\eta\q_{J\setminus j}=0, \qquad \forall \,J\in S_{q+1}.\label{indn prop4}
\end{gather}
For the base case $q=1$, we proceed as follows. We have from~\eqref{control for local solution} that there exists, for each $i\in \{1,\dots,N\}$, an \mbox{$\eta^{(1)}_i \in \Om^{p-1}(U_i^p)$} such that $d\eta^{(1)}_i=\om$ and
\eqn{
\Vert \eta^{(1)}_i \Vert_{L^2\Om^{p-1}(U_i^p)}\leq K \frac{D^{n+1}}{R_c^n}\omnorm.
}
We then set $\nu^{(1)}\df\sum_{i=1}^N \varphi_i\eta^{(1)}_i$ so that
\begin{align*}
\Vert \nu^{(1)} \Vert_{L^2\Om^{p-1}(\dom)}&\leq K N \frac{D^{n+1}}{R_c^n}\omnorm\\
&\overset{\text{by }\eqref{bound on N}}{\leq} K R_c \left(\frac{D}{R_c}\right)^{n+1}\left(\RhRc\right)^{n-1}\omnorm.
\end{align*}
Using that $\sum_{i=1}^N d\varphi_i=0$ on $\dom$, a simple computation from \cite[Section~2]{CT97} gives
\eqn{
\om-d\nu^{(1)}=-\sum_{(i,j)\in S_1} (\varphi_id\varphi_j-\varphi_jd\varphi_i)\wedge \eta^{(2)}_{ij},
}
where $\eta^{(2)}_{ij}\in \Om^{p-1}(U_{ij}^p)$ is the closed form defined to be $\eta^{(1)}_i-\eta^{(1)}_j$ for $i<j$, satisfying
\begin{align*}
\left(\int_{U^p_{ij}}\left|\eta^{(2)}_{ij}\right|^2\,\dv\right)^\half&\leq \left(\int_{U^p_{ij}}\left|\eta^{(1)}_i\right|^2+\left|\eta^{(1)}_j\right|^2+2\langle \eta^{(1)}_i,\eta^{(1)}_j\rangle\,\dv\right)^\half\\
&\leq \left(\int_{U^p_{ij}}\left|\eta^{(1)}_i\right|^2+\left|\eta^{(1)}_j\right|^2+2\max\left\{\left|\eta^{(1)}_i\right|^2,\left|\eta^{(1)}_j\right|^2\right\}\,\dv\right)^\half\\
&\leq K \frac{D^{n+1}}{R_c^n}\omnorm.
\end{align*}
Furthermore, for $(i,j,k)\in S_2$, we have
\eqn{
u_{jk}-u_{ik}+u_{ij}=u_j-u_k-u_i+u_k+u_i-u_j=0.
}
Suppose that the induction hypothesis holds for some $q\in\{1,\dots,p-1\}$. We now show that this would imply that the property holds for $q+1$. Again, we have from~\eqref{control for local solution} that for each $I\in S_q$, there exists $\nu\q_I\in \Om^{p-q-1}(U^{p-q+1}_I)$ such that $d\nu\q_I=\eta\q_I$ on $U^{p-q+1}_I$ and
\eqn{
\Vert \nu\q_I \Vert_{L^2\Om^{p-q-1}(U^{p-q+1}_I)}\leq K \frac{D^{n+1}}{R_c^n}\Vert \eta\q_I \Vert_{L^2\Om^{p-q}(U^{p-q+1}_I)}\leq K \left(\frac{D^{n+1}}{R_c^n}\right)^{q+1}\omnorm,
}
where we use the induction hypothesis in the second inequality. Now, define the $(p-1)$-form on $\dom$,
\eqn{
\nu\qq\df\nu\q+\sum_{I\in S_q}\sum_{i\in I} \vareps(i,I\setminus i)\varphi_i\,d\varphi_{I\setminus i}\wedge \nu\q_I
}
and for each $J\in S_{q+1}$, define the $(p-q-1)$\nobreakdash-form on $U^{p-q}_j$,
\eqn{
\eta\qq_J=-(q+1)\sum_{j\in J}\vareps(j, J\setminus j)\nu\q_{J\setminus j}.
}
The forms $\{\eta\qq_J\}$ are closed by property~\eqref{indn prop4} for $q$, and their $L^2$\nobreakdash-norms are bounded by
\begin{align*}
\Vert \eta\qq_J\Vert_{L^2\Om^{p-q-1}(U_j^{p-q})}&\leq (q+1)\sum_{j\in J} \Vert \nu\q_{J\setminus j}\Vert_{L^2\Om^{p-q-1}(U_j^{p-q})}\\
&\leq (q+1)(q+2)K\left(\frac{D^{n+1}}{R_c^n}\right)^{q+1}\omnorm\\
&\leq K\left(\frac{D^{n+1}}{R_c^n}\right)^{q+1}\omnorm.
\end{align*}
One can then repeat the computations from \mbox{\cite[Section~2]{CT97}} to show that
\begin{gather*}
\om-d\nu\qq=-\sum_{J\in S_{q+1}}\left(\sum_{j\in J}\vareps(j,J\setminus j)\varphi_j d\varphi_{J\setminus j}\right)\wedge \eta_J\qq
\intertext{and}
\sum_{j\in J}\vareps(j,J\setminus j) \,\eta\qq_{J\setminus j}=0, \qquad \forall \,J\in S_{q+2}.
\end{gather*}
Lastly, since $\supp(\varphi_i\,d\varphi_{I\setminus i})\subset U^{p-q+1}_I$, we get from the definition of $\nu\qq$,
\begin{align*}
\Vert \nu\qq \Vert_{L^2\Om^{p-1}(\dom)}-\Vert \nu\q \Vert_{L^2\Om^{p-1}(\dom)}&\leq \sum_{I\in S_q}\sum_{i\in I}\Vert \varphi_i\,d\varphi_{I\setminus i}\Vert_\infty \Vert \nu\q_I \Vert_{L^2\Om^{p-q-1}(U^{p-q+1}_I)}\\
&\leq K \card(S_q) (q+1) \Inv{R_c^q} \left(\frac{D^{n+1}}{R_c^n}\right)^{q+1}\omnorm\\
&\leq K N^{q+1} \Inv{R_c^q} \left(\frac{D^{n+1}}{R_c^n}\right)^{q+1}\omnorm\\
&\overset{\text{by }\eqref{bound on N}}{\leq} K \left(\RhRc\right)^{(n-1)(q+1)}\Inv{R_c^q} \left(\frac{D^{n+1}}{R_c^n}\right)^{q+1}\omnorm,
\end{align*}
which implies that
\begin{align*}
\Vert \nu\qq \Vert_{L^2\Om^{p-1}(\dom)}&\leq\Vert \nu\q \Vert_{L^2\Om^{p-1}(\dom)}+K R_c \left(\frac{D}{R_c}\right)^{(n+1)(q+1)}\left(\RhRc\right)^{(n-1)(q+1)}\omnorm\\
&\leq K R_c \left(\frac{D}{R_c}\right)^{(n+1)q}\left(\RhRc\right)^{(n-1)q}\left[1+\left(\frac{D}{R_c}\right)^{n+1}\left(\RhRc\right)^{n-1}\right]\omnorm\\
&\leq K R_c \left(\frac{D}{R_c}\right)^{(n+1)(q+1)}\left(\RhRc\right)^{(n-1)(q+1)}\omnorm,
\end{align*}
using that $\frac{D}{R_c}\geq 1$ and $\RhRc\geq \half$.
Thus, the lemma follows by induction, where we finally choose $\nu=\nu\p \in \Om^{p-1}(\dom)$ and the constant functions $\eta_I=\eta\p_I$ on $U_I^1$.
\end{proof}
\noi
Recall that
\eqn{
\vol(U_I^1)\geq \vol(U_i) \geq K R_c^n.
}
So, by~\eqref{ineq with vol term},
\eqnum{\label{bound on u_I}
|\eta_I|\leq K \frac{D^{(n+1)p}}{R_c^{n(p+\half)}}\, \omnorm.
}
Consider the family of constants $(\eta_I)_{I\in S_p}$ as a \v{C}ech cochain with respect to the good cover $\{U_i\}_{i=1}^N$. In view of~\eqref{cech cocycle condition}, it is a \v{C}ech cocycle. It then follows from~\eqref{cech-derham isom expression} that the \v{C}ech cohomology class of this cocycle is same as the de~Rham cohomology class of $\om$ (see \cite[Propostion~9.5]{bott-tu}). As $\om$ is exact, it further means that $(\eta_I)_{I\in S_p}$ is in fact a coboundary. Using this and proceeding as in \cite[Section~2]{CT97}, we get that
\eqn{
d\eta=\om,
}
where $\eta$ is constructed as
\eqn{
\eta=\nu+\sum_{H\in S_{p-1}}\sum_{i\in H}\vareps(i,I\setminus i)c_H\left(1+\frac{1}{p}-\frac{p+1}{2}\varphi_i-\sum_{j\notin H}\varphi_j \right)\varphi_i\,d\varphi_{H\setminus i}.
}
Here, $c_H$ are constants which satisfy the bounds
\begin{align*}
\sum_{H\in S_{p-1}} |c_H|&\leq K N^{\frac{p}{2}+1}\sum_{I\in S_p}|\eta_I|\\
&\overset{\text{by \eqref{bound on u_I}}}{\leq} K N^{\frac{p}{2}+1} \card(S_p) \frac{D^{(n+1)p}}{R_c^{n(p+\half)}}\, \omnorm\\
&\leq K \left(\RhRc\right)^{(n-1)(\frac{3p}{2}+2)} \frac{D^{(n+1)p}}{R_c^{n(p+\half)}}\, \omnorm.
\end{align*}
Thus,
\begin{align*}
\frac{\Vert \eta \Vert_{L^2\Om^{p-1}(\dom)}}{\omnorm}&\leq K \left(R_c \left(\frac{D}{R_c}\right)^{(n+1)p}\left(\RhRc\right)^{(n-1)p}\right)\\&\hspace{10em}+K\left(\left(\RhRc\right)^{(n-1)(\frac{3p}{2}+2)} \frac{D^{(n+1)p}}{R_c^{n(p+\half)}}\left(\RhRc\right)^p\frac{1}{R_c^{p-1}}D^\frac{n}{2}\right)\\
&\leq K R_c \left(\frac{D}{R_c}\right)^{(n+1)p+\frac{n}{2}}\left(\RhRc\right)^{(n-1)(\frac{3p}{2}+2)+p},
\end{align*}
which completes the proof of Proposition~\ref{prop: main upper bd on rayleigh}.
{
\parnoi
\textbf{Case 2: }$\Rh< \frac{R_c}{2}$.\nopagebreak
\parnoi
We need the following property which follows from a straightforward adaptation of a result of Ann\'e and Colbois \cite[Theorem~1.1]{AC93} concerning the spectral convergence for perforated domains, to the framework of manifolds with boundary.
}
\begin{prop}\label{prop: anne colbois spectral convergence adaptation}
Let $M$ be a compact Riemannian $n$\nobreakdash-manifold with boundary ($n\geq 2$) and $M_\eps$ be the manifold obtained by removing a finite number of non\nobreakdash-overlapping geodesic balls of radius at~most $\eps>0$, for $\eps$ smaller than the injectivity radius of $M$. Then,
\eqn{
\lim_{\eps\to 0}\exeigpo(M_\eps)=\exeigpo(M),\qquad \forall\,p\in\{0,\dots,n-1\}.
}
\end{prop}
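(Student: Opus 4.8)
The plan is to transfer the proof of \cite[Theorem~1.1]{AC93}, stated there for closed manifolds with finitely many small geodesic balls removed, to the present setting with $\pa M\neq\varnothing$. Write $M_\eps=M\setminus\bigsqcup_{i=1}^m\overline{B}_i$, where each geodesic ball $B_i$ has radius $\rho_i\le\eps$ and is centred at an interior point, the $\overline{B}_i$ are pairwise disjoint, and $\bigsqcup_i\overline{B}_i$ lies in a fixed compact set $K\subset\intr M$ independent of $\eps$. Because the perforation takes place entirely inside $K$, every test form and every extension used below is modified only in $K$ and agrees with the original data in a fixed neighbourhood of $\pa M$; in particular the absolute boundary conditions along $\pa M$ that define the exact spectrum are never disturbed, and the spectral problems on $M$ and $M_\eps$ differ only through the new interior boundary spheres $\Snm_i=\pa B_i$. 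Since an exact form which is also harmonic vanishes, $\exeigpo$ is the bottom of the purely positive exact $p$\nobreakdash-spectrum on each of $M$ and $M_\eps$ (the case $p=0$ being trivial, so assume $1\le p\le n-1$), and it is enough to prove
\[
\limsup_{\eps\to0}\exeigpo(M_\eps)\le\exeigpo(M)\qquad\text{and}\qquad\liminf_{\eps\to0}\exeigpo(M_\eps)\ge\exeigpo(M).
\]

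For the upper bound I would fix a coexact $(p-1)$\nobreakdash-eigenform $\theta=\delta\sigma$ of $M$, with $\sigma$ smooth (hence bounded on the compact $M$), realising $\coeig{p-1}{1}(M)=\exeigpo(M)=\Vert d\theta\Vert^2/\Vert\theta\Vert^2$. Choosing cutoffs $\chi_\eps\in C^\infty(M)$ that vanish near $\bigsqcup_iB_i$, equal $1$ outside small neighbourhoods of the $B_i$ and near $\pa M$, with $0\le\chi_\eps\le1$ and $\Vert d\chi_\eps\Vert_{L^2}\to0$ (linear cutoffs for $n\ge3$, logarithmic ones for $n=2$, reflecting that a point has vanishing $H^1$\nobreakdash-capacity when $n\ge2$), the restriction of $\chi_\eps\theta$ to $M_\eps$ is an admissible $(p-1)$\nobreakdash-form tangential to $\pa M_\eps$. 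From $\chi_\eps\theta=\delta(\chi_\eps\sigma)+i_{\grad\chi_\eps}\sigma$ its $L^2(M_\eps)$\nobreakdash-distance to the coexact subspace is at most $\Vert\sigma\Vert_{L^\infty}\Vert d\chi_\eps\Vert_{L^2}=o(1)$, while $\Vert\chi_\eps\theta\Vert_{L^2}\to\Vert\theta\Vert$ and $\Vert d(\chi_\eps\theta)\Vert_{L^2}\le\Vert\chi_\eps\,d\theta\Vert_{L^2}+\Vert d\chi_\eps\wedge\theta\Vert_{L^2}\to\Vert d\theta\Vert$. Applying the variational characterisation of $\coeig{p-1}{1}(M_\eps)$ to the coexact component of $\chi_\eps\theta$ then gives $\exeigpo(M_\eps)\le\Vert d(\chi_\eps\theta)\Vert_{L^2}^2\big/\bigl(\Vert\chi_\eps\theta\Vert_{L^2}^2-o(1)\bigr)\to\exeigpo(M)$.

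The lower bound is the substantial half, and I expect it to be the main obstacle. Arguing by contradiction, suppose $\exeigpo(M_{\eps_k})\to\lambda_\infty<\exeigpo(M)$ along a subsequence, with $\phi_k$ the corresponding $L^2$\nobreakdash-normalised coexact $(p-1)$\nobreakdash-eigenforms on $M_{\eps_k}$, so $\Vert d\phi_k\Vert_{L^2}^2=\exeigpo(M_{\eps_k})\to\lambda_\infty$. Elliptic boundary regularity for the eigenvalue problem gives a uniform $C^1$\nobreakdash-bound for $\phi_k$ on a fixed shell around each $\Snm_i$ — this non-concentration is the decisive estimate — and this permits extending $\phi_k$ across $B_i$ to a form $\tilde\phi_k$ on $M$ with $\Vert\tilde\phi_k\Vert_{L^2(B_i)}\to0$ and $\Vert d\tilde\phi_k\Vert_{L^2(B_i)}\to0$ (a collar-type extension rescaled to the shrinking ball, with $|d\tilde\phi_k|=O(\rho_i^{-1})$ on a set of volume $O(\rho_i^{\,n})$, and logarithmic rescaling when $n=2$). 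This is where the hypothesis $p\le n-1$ enters: the degree $p-1$ is at most $n-2$, so $\iota^*\phi_k$ has no cohomological flux through $\Snm_i$ obstructing its extension as an asymptotically coexact form (for $p=1$ this reduces to the removability of a point for functions; at $p=n$ such a flux can genuinely occur), and one checks, using also that $\phi_k$ is coclosed, that $\Vert\tilde\phi_k\Vert_{L^2(M)}\to1$ and that the closed component of $\tilde\phi_k$ in $L^2(M)$ tends to $0$. Feeding $\tilde\phi_k$ into the variational characterisation of $\coeig{p-1}{1}(M)=\exeigpo(M)$ then yields $\exeigpo(M)\le\liminf_k\Vert d\tilde\phi_k\Vert_{L^2}^2\big/\bigl(\Vert\tilde\phi_k\Vert_{L^2}^2-o(1)\bigr)=\lambda_\infty$, a contradiction. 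All three ingredients — the non-concentration estimate, the small-energy extension across each $B_i$, and the control of the coexact component — are purely interior to $M$ and are exactly those carried out in \cite{AC93}; the only bookkeeping specific to $\pa M\neq\varnothing$ is that $\chi_\eps\equiv1$ and $\tilde\phi_k\equiv\phi_k$ near $\pa M$, which leaves intact the boundary conditions that define the exact spectrum.
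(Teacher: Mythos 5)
Your proposal matches the paper's treatment: the paper gives no independent argument for Proposition~\ref{prop: anne colbois spectral convergence adaptation}, deriving it as a straightforward adaptation of \cite[Theorem~1.1]{AC93} to manifolds with boundary, on exactly the ground you identify\enspace--\enspace the perforation is interior, so the absolute boundary conditions along $\pa M$ are untouched. Your sketch of the cutoff/capacity upper bound and the extension-based lower bound is a reasonable outline of the Ann\'e--Colbois machinery being invoked, and is in fact more detailed than what the paper records.
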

\vspace{-1em}
\noi
If $\dom$ is an annulus whose outer radius is $R_c$ and inner radius at most $\frac{R_c}{2}$, consider the annulus $\dom'$ of outer radius $1$ and inner radius at most $\half$, obtained from scaling $\dom$ by a factor of $\Inv{R_c}$. By Proposition~\ref{prop: anne colbois spectral convergence adaptation} and continuity of eigenvalues with respect to the metric, we have
\eqn{
\exeigpo(\dom')\geq K,
}
which implies that
\eqn{
\exeigpo(\dom)\geq \frac{K}{R_c^2}.
}
\parnoi
When $\dom$ is a generic domain with $\Rh<\frac{R_c}{2}$, we proceed as follows.
\parnoi
For $n\geq 3$ and $p\in\{2,\dots,n-1\}$, we use a version of \cite[Lemma 1]{GP95} which extends \cite[Lemma~2.3]{mcg93} by Cheeger and McGowan (where it was stated for closed manifolds but the proof is essentially the same for absolute boundary conditions):
\begin{lem}\label{lem: modified McGowan}
Let $(M^n,g)$ be a compact Riemannian manifold with boundary $(n\geq 3)$ such that $M=U_1\cup U_2$ for some smooth subdomains $U_1,U_2$ with $H^{p-1}(U_1\cap U_2, \RR)=0$. Let $\{\rho_1,\rho_2\}$ be a partition of unity subordinate to $\{U_1,U_2\}$ with
\eqn{
c_\rho\df \max\{\Vert \grad\rho_1 \Vert_{\infty}^2, \Vert \grad\rho_2 \Vert_{\infty}^2\}.
}
Then we have, for $p\in\{2,\dots,n-1\}$,
\eqn{
\exeigpo(M)\geq \frac{K}{\left( \frac{1}{\exeigpo(U_1)}+\frac{1}{\exeigpo(U_2)}\right)\left(\frac{c_\rho}{\exeig{p-1}{1}(U_1\cap U_2)}+1\right)}.
}
\end{lem}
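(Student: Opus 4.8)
\textbf{Proof plan for Lemma~\ref{lem: modified McGowan}.}
The plan is to run the Cheeger--McGowan gluing argument in its exact-form version, using the variational characterisation of $\exeigpo$ with no boundary restriction on the test forms, namely
\eqn{
\exeigpo(M)=\inf_{V_1}\,\sup_{\eta\in V_1\setminus\{0\}}\,\sup_{\theta\,:\,\eta=d\theta}\frac{\Vert\eta\Vert^2}{\Vert\theta\Vert^2},
}
applied to a one-dimensional space spanned by an exact eigenform $\om=d\nu$ associated with $\exeigpo(M)$. The goal is to produce a primitive $\theta$ of $\om$ on all of $M$ whose $L^2$-norm is controlled by $\Vert\om\Vert$ times the quantity appearing in the denominator of the claimed bound; feeding this $\theta$ into the characterisation above then yields the lemma. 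Throughout I write $\Vert\cdot\Vert$ for the relevant $L^2\Om^\bullet$-norm on whichever domain is clear from context.

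First I would localise: on each $U_j$ the restriction $\om|_{U_j}$ is exact (since $\om=d\nu$ globally), so by the variational characterisation of $\exeigpo(U_j)$ there exists $\theta_j\in\Om^{p-1}(U_j)$ with $d\theta_j=\om$ on $U_j$ and $\Vert\theta_j\Vert_{L^2\Om^{p-1}(U_j)}^2\leq \exeigpo(U_j)^{-1}\Vert\om\Vert_{L^2\Om^p(U_j)}^2$. On the overlap $U_1\cap U_2$ the difference $\theta_1-\theta_2$ is closed of degree $p-1$; the hypothesis $H^{p-1}(U_1\cap U_2,\RR)=0$ forces it to be exact, so there is $\sigma\in\Om^{p-2}(U_1\cap U_2)$ with $d\sigma=\theta_1-\theta_2$ on $U_1\cap U_2$. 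Here is the one genuinely delicate point, and I expect it to be \emph{the main obstacle}: I must choose this primitive $\sigma$ quantitatively, i.e. with $\Vert\sigma\Vert^2\leq K\,\exeig{p-1}{1}(U_1\cap U_2)^{-1}\Vert\theta_1-\theta_2\Vert^2$. This is exactly where the eigenvalue $\exeig{p-1}{1}(U_1\cap U_2)$ enters: applying once more the boundary-condition-free variational characterisation of $\exeig{p-1}{1}(U_1\cap U_2)$ to the one-dimensional space spanned by the closed-and-exact form $\theta_1-\theta_2$ gives a primitive with precisely this bound. (One must also check that $p-1\geq 1$, which holds since $p\geq 2$, so that this characterisation is available; and note that for $p=2$ the form $\sigma$ is a function and the argument degenerates gracefully.) Then $\Vert d\sigma\Vert=\Vert\theta_1-\theta_2\Vert\leq\Vert\theta_1\Vert+\Vert\theta_2\Vert$, so $\Vert\sigma\Vert^2$ is controlled by $\exeig{p-1}{1}(U_1\cap U_2)^{-1}$ times $\big(\exeigpo(U_1)^{-1}+\exeigpo(U_2)^{-1}\big)\Vert\om\Vert^2$ up to a dimensional constant.

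Next I would glue. Set
\eqn{
\theta\df \rho_2\,\theta_1+\rho_1\,\theta_2+d\rho_2\wedge\sigma \quad\text{on }M,
}
interpreting $\rho_2\theta_1$ as extended by $0$ outside $U_1$, $\rho_1\theta_2$ by $0$ outside $U_2$, and $d\rho_2\wedge\sigma$ (supported in $U_1\cap U_2$ since $d\rho_2=-d\rho_1$ is supported there) by $0$ elsewhere; one checks these extensions are smooth because near $\pa U_1\cap M$ one has $\rho_2\equiv 0$, etc. A direct computation using $\rho_1+\rho_2\equiv 1$, $d\rho_1=-d\rho_2$, $d\theta_j=\om$, and $d\sigma=\theta_1-\theta_2$ on the overlap gives
\begin{align*}
d\theta&=d\rho_2\wedge\theta_1+\rho_2\om+d\rho_1\wedge\theta_2+\rho_1\om-d\rho_2\wedge d\sigma\\
&=\om+d\rho_2\wedge(\theta_1-\theta_2)-d\rho_2\wedge(\theta_1-\theta_2)=\om,
\end{align*}
so $\theta$ is a global primitive of $\om$. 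Estimating its norm: the pointwise bound $\vert\theta\vert\leq\vert\theta_1\vert+\vert\theta_2\vert+\sqrt{c_\rho}\,\vert\sigma\vert$ (using $\rho_j\leq 1$ and $\vert d\rho_2\vert\leq\sqrt{c_\rho}$) together with the elementary inequality $(a+b+c)^2\leq 3(a^2+b^2+c^2)$ yields
\eqn{
\Vert\theta\Vert^2\leq 3\Vert\theta_1\Vert^2+3\Vert\theta_2\Vert^2+3c_\rho\Vert\sigma\Vert^2\leq K\left(\frac{1}{\exeigpo(U_1)}+\frac{1}{\exeigpo(U_2)}\right)\left(1+\frac{c_\rho}{\exeig{p-1}{1}(U_1\cap U_2)}\right)\Vert\om\Vert^2,
\end{align*}
\eqn{}
where in the last step I substituted the bounds for $\Vert\theta_j\Vert$ and $\Vert\sigma\Vert$ and absorbed the various dimensional constants into $K$; the factor $1+c_\rho/\exeig{p-1}{1}(U_1\cap U_2)$ collects the ``$\Vert\theta_1\Vert^2+\Vert\theta_2\Vert^2$'' term (contributing the $1$) and the ``$c_\rho\Vert\sigma\Vert^2$'' term (contributing the $c_\rho/\exeig{p-1}{1}(U_1\cap U_2)$ after factoring out $\exeigpo(U_1)^{-1}+\exeigpo(U_2)^{-1}$). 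Finally, inserting the one-dimensional space $V_1=\RR\om=\RR\,d\nu$ and this particular $\theta$ into the variational characterisation of $\exeigpo(M)$ gives
\eqn{
\exeigpo(M)\geq\frac{\Vert\om\Vert^2}{\Vert\theta\Vert^2}\geq\frac{K}{\left(\frac{1}{\exeigpo(U_1)}+\frac{1}{\exeigpo(U_2)}\right)\left(\frac{c_\rho}{\exeig{p-1}{1}(U_1\cap U_2)}+1\right)},
}
which is the assertion. The remaining routine checks are: smoothness of the extensions-by-zero (handled by the supports of $\rho_1,\rho_2,d\rho_2$ as above), and the fact that when $p=2$ the overlap term involves a genuine function $\sigma\in\Om^0$ and the constant-function ambiguity is harmless since only $d\sigma$ and $\Vert\sigma\Vert$ enter — one simply chooses the norm-minimising primitive, which is exactly what the variational bound for $\exeig{1}{1}(U_1\cap U_2)$ supplies.
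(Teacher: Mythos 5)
Your strategy is the right one, and it is exactly the two-set Cheeger--McGowan gluing that the paper invokes for this lemma (it cites [GP95]/[McG93] and runs the same machinery one degree higher in the Appendix): minimal-norm local primitives $\theta_j$ on $U_j$ controlled via $\exeigpo(U_j)$, a quantitatively chosen primitive $\sigma$ of the closed difference $\theta_1-\theta_2$ on $U_1\cap U_2$ controlled via $\exeig{p-1}{1}(U_1\cap U_2)$ (this is indeed where $H^{p-1}(U_1\cap U_2,\RR)=0$ enters), a partition-of-unity gluing, and the Rayleigh-quotient bound for an exact eigenform. The norm estimates you write down are the correct ones.

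However, your gluing formula is wrong as written, and as stated this step fails. You set $\theta=\rho_2\theta_1+\rho_1\theta_2+d\rho_2\wedge\sigma$ and justify the extensions by zero ``because near $\pa U_1\cap M$ one has $\rho_2\equiv 0$''. The opposite holds: since $\supp\rho_1\subset U_1$, you have $\rho_1\equiv 0$, hence $\rho_2\equiv 1$, near $\pa U_1$ inside $M$, so $\rho_2\theta_1$ does not vanish there and its extension by zero is not smooth (similarly for $\rho_1\theta_2$ near $\pa U_2$). Worse, with your extension convention $\theta\equiv 0$ on $U_1\setminus U_2$ (there $\rho_2=0$, $d\rho_2=0$, and $\rho_1\theta_2$ has been set to $0$), so $d\theta=0\neq\om$ off the overlap; your computation $d\theta=\om$ is only valid where both $\theta_1$ and $\theta_2$ exist. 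The fix is to attach each cutoff to its own primitive: define $\theta\df\theta_1-d(\rho_2\sigma)$ on $U_1$ and $\theta\df\theta_2+d(\rho_1\sigma)$ on $U_2$; these are smooth on $U_1$, resp.\ $U_2$, because $\supp\rho_2\subset U_2$, resp.\ $\supp\rho_1\subset U_1$, and they agree on $U_1\cap U_2$ since $d\sigma=\theta_1-\theta_2$, giving the global primitive $\theta=\rho_1\theta_1+\rho_2\theta_2-d\rho_2\wedge\sigma$. With this corrected $\theta$ the pointwise bound $|\theta|\leq|\theta_1|+|\theta_2|+\sqrt{c_\rho}\,|\sigma|$ and all your subsequent estimates go through verbatim and yield the stated inequality. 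One smaller point: the final step is not literally ``inserting $V_1=\RR\om$ into the inf--sup characterisation'' (a particular $V_1$ only bounds $\exeigpo(M)$ from above); you need the fact recorded in~\eqref{flexible rayleight quotient lower bd} — the coexact primitive of the first exact eigenform minimises the $L^2$-norm among all primitives and realises $\exeigpo(M)$ exactly — to conclude $\exeigpo(M)\geq\Vert\om\Vert^2/\Vert\theta\Vert^2$ for your constructed $\theta$. Since you chose $\om$ to be an eigenform this is available, but it should be stated.
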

\noi
We write $\dom=\dom_1\cup \dom_2$, where $\dom_1$ is the annular region whose outer boundary is the same as that of $\dom$ and the inner boundary is a sphere of radius $\frac{R_c}{2}$ (concentric to $\SRh$), and $\dom_2$ is the annular region of inner radius $\Rh$ and outer radius $R_c$ (also concentric to $R_c$). Choose a partition of unity $\{\hat\rho_1,\hat\rho_2\}$ subordinate to $\{\dom_1,\dom_2\}$ such that
\eqn{
c_\rho\df \max\{\Vert \grad\hat\rho_1 \Vert_{\infty}^2, \Vert \grad\hat\rho_2 \Vert_{\infty}^2\}\leq \frac{K}{(R_c-\Rh)^2} \leq \frac{K}{(\frac{R_c}{2})^2}.
}
Applying Lemma~\ref{lem: modified McGowan} and noting that $H^{p-1}(\dom_1\cap\dom_2, \RR)=H^{p-1}(\Snm, \RR)=0$ for $n\geq 3$ and $p\in \{2,\dots,n-1\}$, we get,
\begin{align*}
\exeigpo(\dom)&\geq \frac{K}{\left(R_c^2 \left(\frac{D}{R_c}\right)^{2(n+1)p+n}\left(\frac{\Rh}{R_c}\right)^{(3n-1)p+4(n-1)}+R_c^2\right)\left(\frac{R_c^{-2}}{R_c^{-2}}+1\right)}\\
&\geq K \frac{1}{R_c^2}\left(\frac{R_c}{D}\right)^{2(n+1)p+n}.
\end{align*}
For $n\geq 3$ and $p\in\{2,\dots,n-1\}$, we thus have, for any annular region,
\begin{align}
\exeigpo(\dom) &\geq K \frac{1}{R_c^2}\left(\frac{R_c}{D}\right)^{2(n+1)p+n}\min\left\{1,\left(\frac{R_c}{\Rh}\right)^{(3n-1)p+4(n-1)}\right\} \notag\\
&\geq K \frac{1}{R_c^2}\left(\frac{R_c}{D}\right)^{n(2n-1)}\min\left\{1,\left(\frac{R_c}{\Rh}\right)^{3(n-1)(n+1)}\right\},\label{longer exprn of main lower bd p>=2}
\end{align}
giving the required lower bound, using that $\Rhh \le D$.
\parnoi
When $n\geq 2$ and $p=1$, for $\Rh<\frac{R_c}{2}$, we take $\dom_1$ and $\dom_2$ as before and obtain from Theorem~\ref{thm: neumann lower bd for func in terms of union},
\begin{align*}
\exeig{1}{1}(\dom)=\mu_1(\dom)&\geq \frac{1}{32} \frac{\vol(\dom_1\cap\dom_2)}{\vol(M)} \min\left\{\mu_1(\dom_1),\mu_1(\dom_2)\right\}\\
&\geq K \left(\frac{R_c}{D}\right)^n \min\left\{\frac{1}{R_c^2}\left(\frac{R_c}{D}\right)^{3n+2}\left(\frac{R_c}{\Rh}\right)^{7n-5},\frac{1}{R_c^2}\right\}\\
&\geq K \frac{1}{R_c^2}\left(\frac{R_c}{D}\right)^{n}\min\left\{1,\left(\frac{R_c}{D}\right)^{3n+2}\left(\frac{R_c}{\Rh}\right)^{7n-5}\right\}.
\end{align*}
Therefore, for $n\geq 2$, we have for any annular region,
\begin{align}
\exeig{1}{1}&\geq K \frac{1}{R_c^2}\min\left\{\left(\frac{R_c}{D}\right)^{3n+2}\left(\frac{R_c}{\Rh}\right)^{7n-5},\left(\frac{R_c}{D}\right)^{n}\min\left\{1,\left(\frac{R_c}{D}\right)^{3n+2}\left(\frac{R_c}{\Rh}\right)^{7n-5}\right\}\right\}\notag\\
&\geq K \frac{1}{R_c^2}\left(\frac{R_c}{D}\right)^{n}\min\left\{1,\left(\frac{R_c}{D}\right)^{3n+2}\left(\frac{R_c}{\Rh}\right)^{7n-5}\right\},\label{longer exprn of main thm p=1}
\end{align}
thus concluding the proof of Theorem~\ref{thm: first abs eigval lower bds for forms on annular}.\qed
\parnoi
\textbf{Proof of Theorem~\ref{thm: neumann lower bd for func in terms of union}: }Let $f$ be an eigenfunction associated with ${\mu_1}(M)$, normalised by
\eqn{
\int_M f^2=1,
}
and $\bar f_U$, $\bar f_V$ be the averages of $f$ over $U$ and $V$, respectively, i.e.,
\eqn{
\bar f_U=\frac{\int_U f }{\vol(U)}\qquad \text{and}\qquad \bar f_V
=\frac{\int_V f }{\vol(V)}.
}
Then, $f-\bar f_U$ and $f-\bar f_V$ are $L^2$\nobreakdash-orthogonal to the constants, and the variational characterisation for the first positive Neumann eigenvalue gives,
\eqnum{\label{ineq 1 in proof of lemma for lower bd for func}
{\mu_1}(U)\int_{U\cap V}(f-\bar f_U)^2  \leq {\mu_1} (U)\int_U (f-\bar f_U)^2  \leq \int_U |df|^2\leq \int_M |df|^2  ={\mu_1}(M),
}
and
\eqnum{\label{ineq 2 in proof of lemma for lower bd for func}
{\mu_1}(V)\int_{U\cap V}(f-\bar f_V)^2\leq {\mu_1}(M).
}
By writing
\eqn{
|\bar f_U-\bar f_V|=\frac{1}{\vol(U\cap V)^\half}\left(\int_{U \cap V} \left((f-\bar f_U)-(f-\bar f_V)\right)^2 \right)^\half,
}
we obtain from inequalities \eqref{ineq 1 in proof of lemma for lower bd for func} and \eqref{ineq 2 in proof of lemma for lower bd for func} that
\eqnum{\label{ineq 3 in proof of lemma for lower bd for func}
|\bar f_U-\bar f_V|\leq \left(\frac{{\mu_1}(M)}{\vol(U\cap V)}\right)^\half \left(\frac{1}{\sqrt{{\mu_1}(U)}}+\frac{1}{\sqrt{{\mu_1}(V)}}\right).
}
From \eqref{ineq 2 in proof of lemma for lower bd for func},
\eqn{
\left\vert \left(\int_V f^2 \right)^\half -\vol(V)^\half |\bar f_V| \right\vert\leq \left(\int_V(f-\bar f_V)^2\right)^\half\leq \sqrt{\frac{{\mu_1}(M)}{{\mu_1}(V)}}.
}
Thus,
\eqn{
\vol(V)^\half |\bar f_V|\geq \left(\int_V f^2 \right)^\half- \sqrt{\frac{{\mu_1}(M)}{{\mu_1}(V)}}.
}
Without loss of generality, we may assume that
\eqn{
\int_V f^2 \geq \half.
}
Then,
\eqn{
|\bar f_V| \geq \frac{1}{\vol(V)^\half}\left(\frac{1}{\sqrt{2}}-\sqrt{\frac{{\mu_1}(M)}{{\mu_1}(V)}}\right).
}
If $\frac{1}{\sqrt{2}}-\sqrt{\frac{{\mu_1}(M)}{{\mu_1}(V)}}\leq 0$, we obtain
\eqn{
{\mu_1}(M)\geq \half {\mu_1}(V),
}
which implies the required lower bound of the theorem. Otherwise, since $\vol(V)\leq \vol(M)$,
\eqnum{\label{ineq 4 in proof of lemma for lower bd for func}
|\bar f_V|\geq \Inv{\vol(M)^\half}\left( \frac{1}{\sqrt{2}}-\sqrt{\frac{{\mu_1}(M)}{{\mu_1}(V)}}\right).
}
On the other hand, $f$ is orthogonal to constants, so
\eqn{
0=\int_M f =\int_U f +\int_{V\setminus U} f =\vol(U)(\bar f_U-\bar f_V)+\vol(M) \bar f_V+\int_{V\setminus U} f -\vol(V\setminus U) \bar f_V,
}
i.e.,
\eqn{
0=\vol(U)(\bar f_U-\bar f_V)+\vol(M) \bar f_V+\int_{V\setminus U} (f-\bar f_V).
}
Therefore,
\eqn{
|\bar f_V|\leq \frac{1}{\vol(M)}\left(\vol(U)|\bar f_U-\bar f_V|+\int_{V\setminus U}|f-\bar f_V|  \right).
}
Using the Cauchy-Schwarz inequality and \eqref{ineq 2 in proof of lemma for lower bd for func}, we get
\eqn{
|\bar f_V|\leq \frac{1}{\vol(M)}\left(\vol(U)|\bar f_U-\bar f_V|+ \vol(V\setminus U)^\half \sqrt{\frac{{\mu_1}(M)}{{\mu_1}(V)}}\right),
}
which gives
\eqnum{\label{ineq 5 in proof of lemma for lower bd for func}
|\bar f_V|\leq |\bar f_U-\bar f_V|+\Inv{\sqrt{\vol(M)}}\sqrt{\frac{{\mu_1}(M)}{{\mu_1}(V)}}.
}
Finally, we have from \eqref{ineq 3 in proof of lemma for lower bd for func}, \eqref{ineq 4 in proof of lemma for lower bd for func} and \eqref{ineq 5 in proof of lemma for lower bd for func} that
\begin{multline*}
\Inv{\sqrt{\vol(M)}}\left(\frac{1}{\sqrt{2}}-\sqrt{\frac{{\mu_1}~(M)}{{\mu_1}(V)}}\right)\\\leq \frac{\sqrt{{\mu_1}(M)}}{\sqrt{\vol(U\cap V)}} \left(\frac{1}{\sqrt{{\mu_1}(U)}}+\frac{1}{\sqrt{{\mu_1}(V)}}\right)+\Inv{\sqrt{\vol(M)}}\sqrt{\frac{{\mu_1}(M)}{{\mu_1}(V)}},
\end{multline*}
which implies that
\begin{align*}
\frac{1}{\sqrt{2}}&\leq 2\left(1+\left(\frac{\vol(M)}{\vol(U \cap V)}\right)^\half\right)\left(\frac{{\mu_1}(M)}{\min\{{\mu_1}(U),{\mu_1}(V)\}}\right)^\half\\
&\leq 4\left(\frac{\vol(M)}{\vol(U \cap V)}\right)^\half \left(\frac{{\mu_1}(M)}{\min\{{\mu_1}(U),{\mu_1}(V)\}}\right)^\half.
\end{align*}
The theorem follows. \qed
\section{Construction of small eigenvalues on annular regions} \label{sec: small eigvals}
In this section, we prove Theorem~\ref{thm: domains with small eigval}. In view of the correspondence between the exact part of the $p$\nobreakdash-spectrum and the coexact part of the $(p-1)$\nobreakdash-spectrum, it will suffice to construct, for each $p\in\{0,\dots,n-2\}$, a sequence of domains $(\ann_\eps^p)_{\eps\in(0,\half)}$ such that $\lim_{\eps\to 0}\coeigpo(\ann_\eps^p)=0$. Note that $\coeig{n-1}{1}$ would be bounded below since
\eqn{
\coeig{n-1}{1}=\exeig{n}{1}=\mu^{(n)}_1=\lm_1,
}
and the first Dirichlet eigenvalue $\lm_1$ is uniformly bounded below for domains with bounded diameter, by the Faber-Krahn inequality. The main idea is to create an extra dimension of cohomology on the limiting domain as the contact radius tends to zero. We then use the corresponding harmonic form to construct on the domains appropriate test forms, the supports of whose exterior derivatives have their volumes converging to zero at a required rate, and estimate the associated Rayleigh quotients that appear in the variational characterisation for the first coexact eigenvalue.
\begin{figure}[H]
    \centering
    \includegraphics[width=0.45\linewidth]{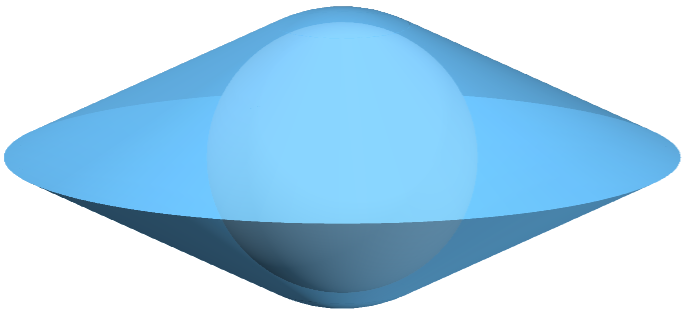}\qquad\qquad
    \includegraphics[width=0.36\linewidth]{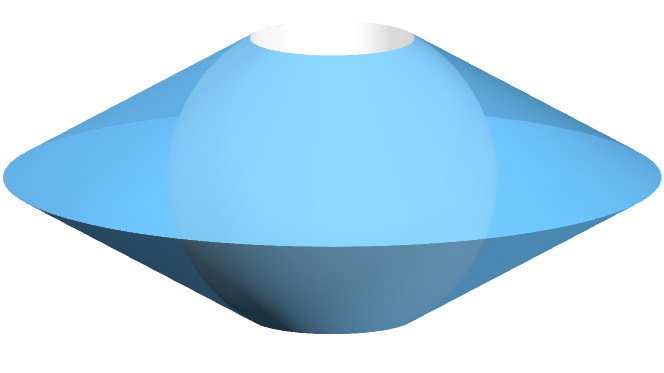}\qquad\qquad
    \includegraphics[width=0.8\linewidth]{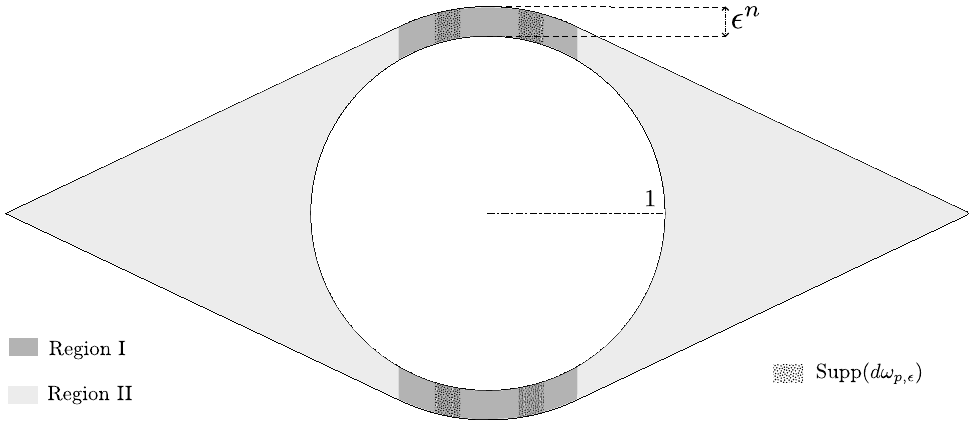}
    \caption{$\ann^p_\eps$, $\ann^p_0$, and cross-section of $\ann^p_\eps$ for $n=3$, $p=1$}
    \label{fig:construction}
\end{figure}
\parnoi
Writing $\RR^n=\RR^{n-p-1}\times \RR^{p+1}$, we use the cylindrical coordinates given by the combination of polar coordinates $(\rho,\thetatil_1,\dots,\thetatil_{n-p-2})$ on $\RR^{n-p-1}$ with the volume element $\rho^{n-p-2}\,d\rho\,d\thetatil_1\dots d\thetatil_{n-p-2}$ and $(r, \theta_1,\dots, \theta_{p})$ on $\RR^{p+1}$ with the volume element $r^p\,dr\,d\theta_1,\dots d\theta_p$. For $\eps\in[0,\half)$, define $\ann^p_\eps\subset\RR^n$ to be the interior of the region given by
\eqn{
\underbrace{\left\{r\leq \half;\, 1\leq \sqrt{\rho^2+r^2}\leq 1+\eps^n\right\}}_{\text{Region I}}\bigcup \underbrace{\left\{r\geq \half;\,1\leq\sqrt{\rho^2+r^2}\text{ and }\rho\sqrt{(1+\eps^n)^2-\frac{1}{4}}+\frac{r}{2}\leq (1+\eps^n)^2\right\}}_{\text{Region II}}
}
after smoothening it around $\left\{(\rho,r)=\left(0,2\left(\half+\eps^n\right)^2\right)\right\}$, which can be done without affecting it beyond an arbitrarily small neighbourhood and such that the convexity of the domain and the tangency of the boundary to $\theta_i$\nobreakdash-directions are preserved. In other words, $\ann^p_\eps$ is obtained by considering a spherical shell of inner radius $1$ and thickness $\eps^n$, and adding to it the region enclosed by the tangential extensions, at $\{r=\half\}$, of the parts of its outer boundary. The boundary of $\ann^p_\eps$ has $C^1$\nobreakdash-regularity at the joint involving the tangential extension, while the other joint where the tangential extensions from the two sides meet and close the boundary can be smoothened, as mentioned above. See Figure~\ref{fig:construction}.
\parnoi
Clearly, the diameter of $\ann^p_\eps$ is bounded independent of~$\eps$. Note that $\{\ann^p_\eps\}_{\eps>0}$ are homotopic to $\Snm$ and have the cohomology
\eqn{
H^p(\ann^p_\eps)=\begin{cases} 0 &\text{if }p\neq 0,\\ \RR &\text{if }p=0,\end{cases}
}
whereas the limiting domain $\ann^p_0$ is homotopic to $\SSp$ and hence
\eqn{
H^p(\ann^p_0)=\begin{cases} \RR &\text{if }p\neq 0,\\ \RR^2 &\text{if }p=0.\end{cases}
}
If $p\neq 0$, we may write $\RR^n=\RR^{n-p-1}\times \RR^{p+1}$, equipping $\RR^{n-p-1}$ with the Cartesian coordinates $(x_1,\dots,x_{n-p-1})$ and $\RR^{p+1}$ with the spherical coordinates $(r, \theta_1,\dots,\theta_p)$, where $\theta_j\in [0,\pi]$ for \mbox{$j\in \{1,\dots,p-1\}$} and $\theta_p\in [0,2\pi)$. The metric in these coordinates takes the form
\eqn{
ds^2=\bigoplus_{i=1}^{n-p-1}dx_i^2\oplus dr^2\oplus r^2\left( \bigoplus_{j=1}^p \alpha_j^2\,d\theta_j^2\right),
}
where $\{\alpha_j\}$ are smooth functions of $\{\theta_j\}$ given by
\eqn{
\alpha_j(\theta_1,\dots,\theta_p)\df \prod_{k=1}^{j-1} \sin\theta_k,\qquad j\in \{1,\dots,p\}.
}
Then, for $p\neq0$, the $p$\nobreakdash-form
\eqn{
\om_p\df \alpha_1\dots\alpha_p\, d\theta_1\wedge\dots\wedge d\theta_p
}
defined on $\RR^{n-p-1}\times (\RR^{p+1}\setminus\{0\})$ is a harmonic representative of its $p^\text{th}$ cohomology:
\eqn{
d\om_p=d(\alpha_1\dots\alpha_p)\wedge d\theta_1\wedge\dots\wedge d\theta_p=0
}
and
\eqn{
\delta \om_p=\pm \star d \star \om_p=\pm\star d(r^{-p} dr\wedge dx_1\wedge \dots\wedge dx_{n-p-1})=0.
}
The restriction of $\om_p$ to $\ann^p_0$ is a non-zero harmonic form satisfying the absolute boundary conditions, since the $\{\theta_i\}$ directions are parallel to the boundary.
\parnoi
If $p=0$, we consider the function $\om_0$ that equals $1$ on one of the connected components and $-1$ on the other.
\parnoi
Let $\chi_\eps$ be a smooth, non\nobreakdash-negative, increasing function defined on $\RR_{\geq 0}$ such that it is equal to $0$ on $[0,\frac{\eps}{3}]$ and $1$ on $[\frac{2\eps}{3}, \infty)$. We construct the family of test forms
\eqn{
\om_{p,\eps}\df \chi_\eps(r) \om_p
}
defined on $\ann^p_\eps$, for $\eps>0$. Note that the $p$\nobreakdash-form $\om_{p,\eps}$ is tangential to the boundary of $\ann^p_\eps$ (i.e., $i_\normal \om_{p,\eps}=0$ on~$\pa \ann^p_\eps$), and when $p=0$, it is orthogonal to the constant functions on $\ann^0_\eps$. Furthermore, it is also coclosed:
\eqn{
\delta \om_{p,\eps}=\delta (\chi_\eps\om_p)=\chi_\eps\, \delta \om_p-i_{\grad\chi_\eps}\om_p=0,
}
because $\om_p$ is coclosed and $\grad \chi_\eps$ is orthogonal to the angular directions corresponding to $\{\theta_j\}$.
We intend to show that
\eqn{
\lim_{\eps\to 0} \frac{\int_{\ann^p_\eps} |d\om_{p,\eps}|^2\,\dv}{\int_{\ann^p_\eps} |\om_{p,\eps}|^2\,\dv}=0.
}
Since $\chi_\eps=1$ on $[\half,\infty)$ for every $\eps \in (0,\half)$, we have
\eqn{
\int_{\ann^p_\eps} |\om_{p,\eps}|^2\,\dv\geq
\int_{\ann^p_0} |\om_p|^2\,\dv>0,
}
i.e., the denominator in the above limit is bounded away from $0$, independent of $\eps$. Hence, it would suffice to show that 
\eqn{
\lim_{\eps\to 0} \int_{\ann^p_\eps} |d\om_{p,\eps}|^2\,\dv =0.
}
We proceed as follows.
\eqn{
d\om_{p,\eps}=d(\chi_\eps\om_p)=d(\chi_\eps)\wedge \om_p \pm \chi_\eps\wedge d\om_p=\chi_\eps'(r)\,dr\wedge \om_p.
}
So,
\begin{align*}
|d\om_{p,\eps}|^2\,\dv&=(\chi'_\eps)^2(dr\wedge \om_p)\wedge \star (dr\wedge \om_p)\\
&=\pm(\chi'_\eps)^2 (dr\wedge \om_p)\wedge (r^{-p}dx_1\wedge \dots\wedge dx_{n-p-1})\\
&=(\chi'_\eps)^2 r^{-2p} \dv.
\end{align*}
Choosing the function $\chi$ defined earlier such that $\Vert\chi'\Vert_{\infty}\leq K \eps\inv$, we get
\begin{align*}
\int_{\ann^p_\eps} |d\om_{p,\eps}|^2\,\dv &\leq K\eps^{-2p-2} \vol\left(\supp \ann^p_\eps\right)\\
&= K\eps^{-2p-2} \vol\left(\ann^p_\eps\cap \left\{\frac{\eps}{3}\leq r \leq \frac{2\eps}{3}\right\}\right).
\end{align*}
The volume term on the RHS is
\begin{align*}
\vol\left(\ann^p_\eps \cap \left\{\frac{\eps}{3} \leq r \leq \frac{2\eps}{3}\right\}\right)&= K(n,p) \int_{r=\frac{\eps}{3}}^\frac{2\eps}{3} \int_{\rho=\sqrt{1-r^2}}^{\sqrt{(1+\eps^n)^2-r^2}} r^p \rho^{n-p-2}\, d\rho\,dr\\
&=K \int_{\frac{\eps}{3}}^\frac{2\eps}{3} r^p \left[\left((1+\eps^n)^2-r^2\right)^{\frac{n-p-1}{2}}-\left(1-r^2\right)^{\frac{n-p-1}{2}}\right]\,dr,
\end{align*}
which can be estimated as follows. We have that
\eqn{
r=\Appr{\eps} \qquad \text{and} \qquad (1+\eps^n )^2 = 1+2\eps^n+\Appr{\eps^{2n}},
}
so
\eqn{
\frac{(1+\eps^n)^2-1}{1-r^2}=\Appr{\eps^n}.
}
Then
\eqn{
\left((1+\eps^n)^2-r^2\right)^{\frac{n-p-1}{2}}=(1-r^2)^{\frac{n-p-1}{2}}\left[ 1+\half(n-p-1)\frac{(1+\eps^n)^2-1}{1-r^2}+\Appr{\eps^{2n}}\right],
}
implying that
\eqn{
\vol\left(\supp \ann^p_\eps\right)=\int_{\frac{\eps}{3}}^{\frac{2\eps}{3}} r^p \,f(r) \,dr,
}
with $f(r)=\Appr{\eps^n}$. Hence,
\eqn{
\vol\left(\supp \ann^p_\eps\right)=\Appr{\eps^{n+p+1}},
}
giving that
\eqn{
\int_{\ann^p_\eps} |d\om_{p,\eps}|^2\,\dv = \Appr{\eps^{n-p-1}}.}
Since $p\leq n-2$,
\eqn{
\lim_{\eps\to 0} \int_{\ann^p_\eps} |d\om_{p,\eps}|^2\,\dv =0.
}
Thus,
\eqn{
\lim_{\eps\to 0} \frac{\int_{\ann^p_\eps} |d\om_{p,\eps}|^2\,\dv}{\int_{\ann^p_\eps} |\om_{p,\eps}|^2\,\dv}=0,
}
which from the variational characterisation
\eqn{
\coeigpo(M)=\inf_{\begin{subarray}{c}\phi\in\Om^p(M)\\\delta \phi=0\\i_\normal \phi=0 \end{subarray}}\frac{\Vert d\phi\Vert^2_{L^2\Om^{p+1}(M)}}{\Vert \phi\Vert^2_{L^2\Om^p(M)}} \qquad (\text{for $\phi$ with finite $H^1$\nobreakdash-norm}), 
}
implies that $\coeig{p}{1}(\ann^p_\eps)\to 0$ as $\eps\to 0$. \qed
\section{Eigenvalue lower bounds for convex domains with multiple holes} \label{sec: lower bds multiple holes}
In this section, we first present lower bounds for certain higher order exact eigenvalues of convex domains with multiple holes of varying sizes, with no further geometric assumptions on the domain. Next, we prove a generalised version of Theorem~\ref{thm: induction method bounds for identical holes} which gives lower bounds for the first exact eigenvalue under additional geometric hypothesis on the domain. We also construct a family of convex domains with multiples holes, satisfying the hypothesis of Theorem~\ref{thm: induction method bounds for multi holes}, having arbitrarily small eigenvalues as the partition parameter tends to zero, with the diameter of the domain and the radius of the largest hole uniformly bounded. 

\parnoi
Let $\dom\subset \RR^n$ be a perforated domain with $\holes$ many holes, that is of the form
\eqn{
\dom=\Dom\setminus \bigsqcup_{i=1}^\holes \overline{\BB_{c_i,r_i}},
}
where $\Dom$ is a convex domain in $\RR^n$ and $\BB_{c_i,r_i}$ denotes the Euclidean $n$-ball of radius $r_i>0$, centered at $c_i\in \RR^n$. Here, $\{c_i\}$ and $\{r_i\}$ are assumed to be such that the closures of the balls are mutually disjoint and strictly contained in $\Dom$.
\begin{figure}[H]
    \centering
    \includegraphics[width=0.45\linewidth]{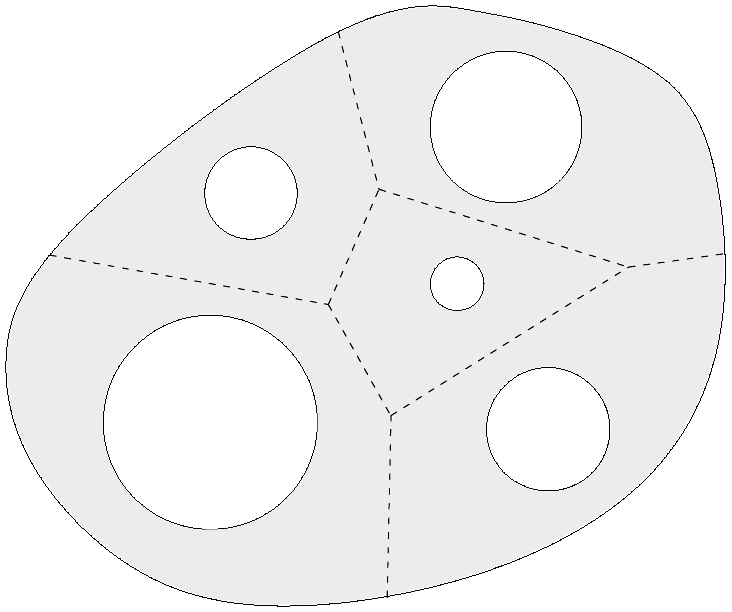}
    \hspace{3em}
    \includegraphics[width=0.45\linewidth]{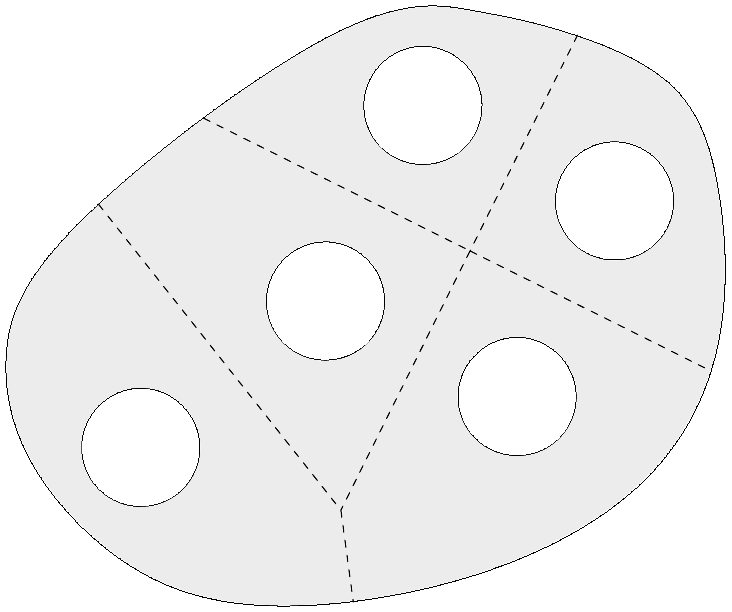}
    \caption{Laguerre-Voronoi diagram}
    \label{fig: LV diagram}
\end{figure}
\parnoi
Consider the \emph{Laguerre-Voronoi diagram} (also known as the power diagram) ~\cite[Section~6.2]{text-voronoi} of $\dom$, with centers $\{c_i\}_{i=1}^\holes$ and the corresponding weights $\{r_i^2\}_{i=1}^\holes$. This gives a partition of $\dom$ into $\holes$ many subregions (known as ``cells'') given by
\eqn{
\mathcal{C}_i\df\left\{x\in \overline{\dom}: \Vert x-c_i\Vert^2-r_i^2\leq \Vert x-c_j\Vert^2-r_j^2,\,\,\forall\,j\in\{1,\dots,\holes\} \right\},
}
such that $\{\mathcal{C}_i\cup\BB_{c_i,r_i}\}$ are convex. See~Figure~\ref{fig: LV diagram}.
\parnoi
We define the \emph{partition parameter} of $\dom$ associated with the Laguerre-Voronoi diagram to be
\eqn{
\Rpart(\dom)\df \min_{i\in \{1,\dots,\holes\}} R_c(\mathcal{C}_i),
}
where $R_c$ is the contact radius defined earlier for annular domains.
Let $k_p$ denote the number of non-empty intersections of order $p$ in the Laguerre-Voronoi partition of~$\dom$.
\begin{thm}\label{thm: mcgowan method bounds for multi holes} Let $\dom\subset \RR^n$ be a smooth, bounded, convex domain with $\holes$ many holes, as above. Then the $(1+k_p)^\text{th}$ exact eigenvalue of $\dom$ satisfies the following lower bounds.
\begin{enumerate}[(i)]
\item For $n\geq 3$ and $p\in \{2,\dots,n-1\}$,
\eqn{
\exeig{p}{1+k_p}(\dom)\geq \frac{K}{\holes^{2p-1}}\frac{1}{D^2} \left(\frac{\Rpart}{D}\right)^{5n^2+n-9}\min\left\{1,\Inv{\Rpart^{2(n-3)}}\right\}.
}
\item For $n\geq 2$ and $p=1$,
\eqn{
\exeig{1}{1+k_1}(\dom) \geq \frac{K}{\holes} \frac{1}{D^2}\left(\frac{\Rpart}{D}\right)^{11n-7}.
}
\end{enumerate}
\end{thm}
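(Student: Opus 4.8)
The plan is to run a \v{C}ech--de~Rham gluing argument over the open cover of $\dom$ induced by its Laguerre--Voronoi partition, with Theorem~\ref{thm: first abs eigval lower bds for forms on annular} controlling the eigenvalues of the individual cells and the convex bound of Guerini~\cite{Gue04i} controlling those of the overlaps; the local-to-global step is a generalisation of McGowan's lemma to $p$-forms, which is the technical heart. \textbf{Step 1 (the cover).} Thicken each Laguerre--Voronoi cell $\mathcal{C}_i$ to an open set $U_i$, chosen small enough that: $\{U_i\}_{i=1}^{\holes}$ covers $\dom$; each $U_i$ is again an annular domain with convex outer boundary and a single spherical hole of radius $r_i$, with $\diam(U_i)\le D$ and $R_c(U_i)\ge K\Rpart$; every non-empty multiple intersection $U_I=\bigcap_k U_{i_k}$ with $|I|\ge 2$ is an open \emph{convex} set of diameter $\le D$ --- here one uses that the faces of a power diagram lie in affine hyperplanes, so thin neighbourhoods of the non-empty faces can be taken convex, which also makes $\{U_i\}$ a good cover with exactly $k_j$ non-empty $j$-fold intersections; and there is a subordinate partition of unity $\{\rho_i\}$ with $c_\rho\df\max_i\Vert\grad\rho_i\Vert_\infty^2\le K/\Rpart^2$.

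\textbf{Step 2 (pieces and overlaps).} The right-hand side of Theorem~\ref{thm: first abs eigval lower bds for forms on annular} is monotone in $R_c$, in $\Rh$, and in $D$, so applying it to each $U_i$ gives, for $p\in\{2,\dots,n-1\}$,
\eqn{
\exeigpo(U_i)\ \ge\ K\,\frac{1}{D^2}\Big(\frac{\Rpart}{D}\Big)^{2n^2-n-2}\min\!\Big\{1,\Big(\frac{\Rpart}{\Rhh}\Big)^{3(n-1)(n+1)}\Big\},
}
and the $p=1$ clause of the same theorem in degree one. For $|I|\ge 2$ the set $U_I$ is convex, hence contractible, so all its reduced cohomology vanishes and, by the absolute/exact splitting together with~\cite{Gue04i}, $\exeig{j}{1}(U_I)\ge K/\diam(U_I)^2\ge K/D^2$ in every relevant degree $j\le p-1$ (and likewise for the eigenvalues of the lower-order intersections, all of which are convex).

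\textbf{Step 3 (gluing).} Contractibility of all the higher intersections supplies exactly the cohomological hypotheses needed to push McGowan's argument through every level of the \v{C}ech complex in degree $p$. We then invoke the generalisation of McGowan's lemma proved in the Appendix: starting from minimal local primitives on the $U_i$ and descending the \v{C}ech complex, one reconstructs a global primitive of a given exact $p$-form after quotienting by an obstruction space of dimension at most $k_p$ (arising from the $p$-cochains of the cover), with the $L^2$-norm of the primitive controlled by the $\exeigpo(U_i)^{-1}$, by the ratios $c_\rho/\exeig{j}{1}(U_I)$ over all higher intersections and degrees $j\le p-1$, and by a purely combinatorial factor that is $O(\holes^{2p-1})$. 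Since $\dim V=1+k_p$ exceeds the dimension of the obstruction space, every such subspace $V$ of exact $p$-forms contains a nonzero element with a controlled primitive, so the variant variational characterisation with primitives yields a lower bound for $\exeig{p}{1+k_p}(\dom)$; substituting Step~2 (in particular $c_\rho/\exeig{j}{1}(U_I)\le KD^2/\Rpart^2$) and collecting the powers of $\Rpart/D$ and of $\Rpart/\Rhh$ produces the bound of part~(i). Part~(ii) is the same scheme at $p=1$ --- where the degree-$0$ obstruction is just the constant functions and $2p-1=1$ --- using the $p=1$ clause of Theorem~\ref{thm: first abs eigval lower bds for forms on annular} for the pieces, which gives the stated bound with the single factor $1/\holes$.

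\textbf{The main obstacle.} The crux is the generalised McGowan lemma itself: carrying out the local-to-global reconstruction through all levels of the \v{C}ech--de~Rham double complex while keeping \emph{explicit} track of every intermediate $L^2$-norm, and correctly identifying both the finite-dimensional obstruction space (hence the exact eigenvalue shift $k_p$) and the combinatorial power $\holes^{2p-1}$ entering the bound. This is deferred to the Appendix. A secondary technical point is arranging in Step~1 a single cover that is simultaneously a good cover, has cells that are annular with convex outer boundary, has all higher intersections convex of bounded diameter, and admits a partition of unity with gradient bound $K/\Rpart$; this rests on the affine-hyperplane structure of the faces of the power diagram.
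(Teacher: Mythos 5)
Your proposal follows essentially the same route as the paper: thicken the Laguerre--Voronoi cells into an open cover whose members satisfy Theorem~\ref{thm: first abs eigval lower bds for forms on annular} with contact radius at least $K\Rpart$ and diameter at most $D$, whose non-trivial intersections are convex (so the bound of \cite{Gue04i} applies and the required cohomology vanishes), and glue via the generalised McGowan lemma (Lemma~\ref{lem: various mcGow lemmas} and Remark~\ref{rmk: mcGow for higher p}), which yields exactly the index shift $k_p$ and the combinatorial factor $\holes^{2p-1}$. The only cosmetic slip is calling the cover a good cover --- the annular pieces $U_i$ are not contractible --- but, as in the paper, only the vanishing of the relevant cohomology of the intersections is actually used, so this does not affect the argument.
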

\parnoi
Although we use the Laguerre-Voronoi partition in Theorem~\ref{thm: mcgowan method bounds for multi holes} (and Theorem~\ref{thm: induction method bounds for multi holes} below) to have an ``optimal'' value of the partition parameter, especially when the holes are of identical size, the lower bounds also hold if we consider any partition of $\dom$ into $\holes$ many pieces, each of which is a convex domain with a hole in the interior, and use the partition parameter and $k_p$ corresponding to it.
\parnoi
In order to have the best possible partition parameter $\Rpart$ that we use in our estimates, it may seem more natural to consider the additively weighted Voronoi diagram (whose partition parameter would be equal to the contact radius $\Rch$) where the cells are given by
\begin{samepage}
\begin{align*}
\mathfrak{C}_i&\df\left\{x\in \overline{\dom}: \dist(x,\BB_{c_i,r_i})\leq \dist(x,\BB_{c_j,r_j}),\,\forall\,j\in\{1,\dots,\holes\} \right\}\\
&=\left\{x\in \overline{\dom}: \Vert x-c_i\Vert-r_i\leq \Vert x-c_j\Vert-r_j,\,\forall\,j\in\{1,\dots,\holes\} \right\},
\end{align*}
\end{samepage}\hspace{-.3em}
but the outer boundaries of these cells are no more convex, unless the holes are of identical radii. The convexity of the outer boundary is important for our purposes as we intend to use in our proof the lower bounds derived earlier for convex domains minus a hole. Nevertheless, the Laguerre-Voronoi diagram coincides with the standard Voronoi diagram when the holes are of identical size.
\begin{remark}
The contact radius for convex domains with identical holes defined earlier is also applicable for holes of varying size:
\eqn{
\Rch(\dom)= \min\left\{r_c,\frac{d_h}{2}\right\},
}
where $r_c$ is the distance between the outer boundary and a hole closest to it and $d_h$ is the shortest of the distances between any two holes. Further, if $\hat{d}_h$ denotes the largest of the distances between any two holes, and $\rhh$, $\Rhh$ denote the smallest and the largest of the radii of the holes, an elementary computation shows that the partition parameter satisfies
\eqn{
\left(\frac{d_h+2\hat{r}_h}{\hat{d}_h+2\hat{R}_h}\right)\Rch\leq \Rpart \leq \Rch,
}
and that $\Rpart=\Rch$ when the holes are of identical size.
\end{remark}
These lower bounds from Theorem~\ref{thm: mcgowan method bounds for multi holes} in general do not give lower bounds for the smallest exact eigenvalue $\exeigpo$, unless the Laguerre-Voronoi partition of $\dom$ (or, if exists, any other partition with the cells having a convex outer boundary) has no $p$\nobreakdash-intersections. However, we can derive the lower bounds of Theorem~\ref{thm: induction method bounds for multi holes} below for $\exeigpo$ under additional geometric hypothesis on $\dom$, using an induction-like approach.
\begin{hyp}\label{hyp: geom condition on domain with holes}
The Laguerre-Voronoi diagram $\{\mathcal{C}_i\}_{i=1}^\holes$ of the domain $\dom$, up to reordering of the cells, is such that
\eqn{
\bigcup_{i=1}^\ell \,\mathcal{C}_i \cup \, \BB_{c_i,r_i}
}
is convex for each $\ell\in\{1,\dots,\holes\}$.
\end{hyp}

\begin{thm}\label{thm: induction method bounds for multi holes}
Let $\dom\subset \RR^n$ be smooth, bounded, convex domain with $\holes$ many holes, as above. Suppose that $\dom$ satisfies the geometric condition given by Hypothesis~\ref{hyp: geom condition on domain with holes}. Then we have the following lower bounds for the first exact eigenvalue.
\begin{enumerate}[(i)]
\item For $n\geq 3$ and $p\in \{2,\dots,n-1\}$,
\eqn{
\exeigpo(\dom)\geq K \frac{1}{D^2} \left(\frac{\Rpart}{D}\right)^{5n^2-n-7+2\holes}.
}
\item For $n\geq 2$ and $p=1$,
\eqn{
\exeig{1}{1}(\dom)\geq K \frac{1}{D^2} \left(\frac{\Rpart\rhh^{n-1}}{D^n}\right)^{\holes-1}\left(\frac{\Rpart}{D}\right)^{11n-7}.
}
\end{enumerate}
\end{thm}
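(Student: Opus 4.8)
\emph{Strategy.} The plan is to argue by induction on the number of holes $\holes$, peeling off one Laguerre--Voronoi cell at a time and gluing via a two--set cover; the base case $\holes=1$ is exactly Theorem~\ref{thm: first abs eigval lower bds for forms on annular}. After reordering the cells as in Hypothesis~\ref{hyp: geom condition on domain with holes}, set for $1\le\ell\le\holes$
\eqn{
\dom_\ell\df\Bigl(\,\bigcup_{i=1}^\ell \mathcal{C}_i\cup\BB_{c_i,r_i}\Bigr)\setminus\bigsqcup_{i=1}^\ell\overline{\BB_{c_i,r_i}},
}
so that $\dom_\holes=\dom$. By Hypothesis~\ref{hyp: geom condition on domain with holes} each $\dom_\ell$ is a (piecewise smooth) domain with convex outer boundary and $\ell$ holes, partitioned by the convex--with--hole pieces $\mathcal{C}_1,\dots,\mathcal{C}_\ell$; it inherits the same hypothesis for this partition, its diameter is $\le D$, and its partition parameter $\min_{i\le\ell}R_c(\mathcal{C}_i)$ is $\ge\Rpart(\dom)$. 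Since all these quantities are intrinsic, the bound being proved (with $\ell-1$ in place of $\holes$) applies to $\dom_{\ell-1}$ by the inductive hypothesis, and Theorem~\ref{thm: first abs eigval lower bds for forms on annular} applies to the annular cell $\mathcal{C}_\ell$; in both cases the right-hand sides are bounded below by substituting $\Rpart$, $\Rhh$, $D$ of $\dom$ for the intrinsic contact radius, hole radius and diameter.

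\emph{The gluing step.} Write $\dom_\ell=\dom_{\ell-1}\cup\mathcal{C}_\ell$ and let $W$ be the (piecewise hyperplanar) wall separating them. I would take open sets $U_1\supset\dom_{\ell-1}$ and $U_2\supset\mathcal{C}_\ell$ covering $\dom_\ell$, obtained by enlarging each piece across $W$ through a collar of transverse width of order $\Rpart$, realised by a quasi-isometry whose distortion is bounded in terms of $n$ only (as with the convexifying map $\Psi$ in the proof of Theorem~\ref{thm: first abs eigval lower bds for forms on annular}), so that $\exeigpo(U_1)$ and $\exeigpo(U_2)$ still satisfy, up to a factor $K$, the bounds available for $\dom_{\ell-1}$ and $\mathcal{C}_\ell$. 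The intersection $U_1\cap U_2$ is then a collar neighbourhood of a topological disk inside $W$, hence contractible --- so $H^{p-1}(U_1\cap U_2,\RR)=0$ for all $p\ge2$ --- has diameter $\le D$, and, after a further controlled quasi-isometry, is convex, so $\exeig{p-1}{1}(U_1\cap U_2)\ge K/D^2$ by the convex-domain bound of~\cite{Gue04i}. The subordinate partition of unity can be taken with gradients of order $\Rpart^{-1}$, so $c_\rho\le K/\Rpart^2$; and the collar can be arranged with $(n-1)$-dimensional cross-section $\gtrsim\rhh^{\,n-1}$ (an elementary estimate using that $\mathcal{C}_\ell\cup\BB_{c_\ell,r_\ell}$ is convex and contains a ball of radius $r_\ell\ge\rhh$), so $\vol(U_1\cap U_2)\ge K\Rpart\rhh^{\,n-1}$ while $\vol(\dom_\ell)\le KD^n$.

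\emph{Closing the recursion.} For $n\ge3$ and $p\in\{2,\dots,n-1\}$, Lemma~\ref{lem: modified McGowan} gives $\exeigpo(\dom_\ell)\ge K\big/\big[(\exeigpo(U_1)^{-1}+\exeigpo(U_2)^{-1})(c_\rho\,\exeig{p-1}{1}(U_1\cap U_2)^{-1}+1)\big]$; by the estimates above the second bracket is $\le K(D/\Rpart)^2$, and since the exponent of $\Rpart/D$ in the bound for $\dom_{\ell-1}$, namely $2n^2-n-6+2\ell$, is $\ge 2n^2-n-2$ once $\ell\ge2$, the $U_1$--term dominates the first bracket; one obtains
\eqn{
\exeigpo(\dom_\ell)\ge K\,\frac{1}{D^2}\left(\frac{\Rpart}{D}\right)^{2n^2-n-4+2\ell}\min\left\{1,\left(\frac{\Rpart}{\Rhh}\right)^{3(n-1)(n+1)}\right\},
}
so each step multiplies the bound by $(\Rpart/D)^2$, and $\holes-1$ steps from $\ell=1$ give part~(i). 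For $n\ge2$ and $p=1$, one uses Theorem~\ref{thm: neumann lower bd for func in terms of union} instead (no cohomological or gradient hypothesis needed): writing $m\df K D^{-2}(\Rpart/D)^{n-2}\min\{1,(\Rpart/D)^{3n+2}(\Rpart/\Rhh)^{7n-5}\}$ for the common lower bound furnished by Theorem~\ref{thm: first abs eigval lower bds for forms on annular} (for $\mathcal{C}_\ell$) and by the inductive hypothesis (for $\dom_{\ell-1}$), one has $\min\{\mu_1(\dom_{\ell-1}),\mu_1(\mathcal{C}_\ell)\}\ge m\,(\Rpart\rhh^{\,n-1}/D^n)^{\ell-2}$, hence
\eqn{
\exeig{1}{1}(\dom_\ell)=\mu_1(\dom_\ell)\ge\frac{1}{32}\,\frac{\vol(U_1\cap U_2)}{\vol(\dom_\ell)}\,\min\{\mu_1(U_1),\mu_1(U_2)\}\ge K\,m\left(\frac{\Rpart\rhh^{\,n-1}}{D^n}\right)^{\ell-1},
}
and $\holes-1$ steps give part~(ii), the annular factor $m$ (which carries the $\min\{\cdot\}$ term and the $(\Rpart/D)^{n-2}$) entering exactly once.

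\emph{Main obstacle.} The delicate part is the construction of the cover $\{U_1,U_2\}$: one must simultaneously ensure (a) that the enlarged pieces $U_i$ do not lose their exact eigenvalue bounds beyond a factor $K$, which forces the enlargement to be a quasi-isometry with distortion depending only on $n$, mimicking the convexification device in the proof of Theorem~\ref{thm: first abs eigval lower bds for forms on annular}; (b) that the overlap $U_1\cap U_2$ is at once of diameter $O(D)$, contractible (so Lemma~\ref{lem: modified McGowan} applies for $p\ge2$), comparable to a convex domain (so \cite{Gue04i} controls $\exeig{p-1}{1}$ there), of transverse width of order $\Rpart$ (controlling $c_\rho$), and of volume $\gtrsim\Rpart\rhh^{\,n-1}$ (needed for $p=1$); and (c) that $\dom_{\ell-1}$ genuinely inherits Hypothesis~\ref{hyp: geom condition on domain with holes} with partition parameter $\ge\Rpart(\dom)$, so the induction closes. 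The remainder is the bookkeeping of exponents indicated above. Note that only the two--set, degree--$p$ form of McGowan's lemma is needed here; the higher--degree generalisation proved in the appendix enters Theorem~\ref{thm: mcgowan method bounds for multi holes}, not this one.
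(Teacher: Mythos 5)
Your proposal is correct and follows essentially the same route as the paper: induction over the Laguerre--Voronoi cells ordered by Hypothesis~\ref{hyp: geom condition on domain with holes}, with the base case from Theorem~\ref{thm: first abs eigval lower bds for forms on annular}, the gluing step done with the two-set Lemma~\ref{lem: modified McGowan} for $p\geq 2$ and Theorem~\ref{thm: neumann lower bd for func in terms of union} for $p=1$, the overlap volume estimate $\gtrsim \Rpart\rhh^{\,n-1}$, and the same exponent bookkeeping (a factor $(\Rpart/D)^2$ per step for $p\ge 2$ and $\Rpart\rhh^{\,n-1}/D^n$ per step for $p=1$). The only presentational difference is that the paper fixes once and for all smooth extensions $\celleps{i}$ of the cells (keeping $\celleps{i}\cup\BB_{c_i,r_i}$ convex) and runs the induction on their unions, applying Theorem~\ref{thm: first abs eigval lower bds for forms on annular} directly to each extended cell, whereas you re-enlarge across the wall at each step via controlled quasi-isometries; both devices serve the same purpose and your handling of the overlap (flat convex wall, contractible collar, $\exeig{p-1}{1}\geq K/D^2$ via~\cite{Gue04i}) matches what the paper uses implicitly.
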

\noi
The exponents of the ratios $\Rpart/D$ and $\Rpart\hat{r}_h^{n-1}/D^n$ in Theorem~\ref{thm: induction method bounds for multi holes} depend on the number of holes, in contrast to the lower bounds of Theorem~\ref{thm: mcgowan method bounds for multi holes}.
\begin{remark} \label{rmk: weaker hyp for induction method multi holes}
Note that Hypothesis~\ref{hyp: geom condition on domain with holes} is always true when $\dom$ has at most two holes. Furthermore, Theorem~\ref{thm: induction method bounds for multi holes} can be adapted to give similar lower bounds for a larger class of domains than those satisfying this hypothesis, for instance, when the cells of the Laguerre-Voronoi partition may be arranged into a finite nested sequence, with the subset of the cells from each inner sequence satisfying Hypothesis~\ref{hyp: geom condition on domain with holes} and the set of unions of the cells in each inner sequence themselves satisfying an analogue of Hypothesis~\ref{hyp: geom condition on domain with holes}. This allows, for example, the domain depicted in~Figure~\ref{fig:ExampleIII}. Another simple example is the following domain with four holes, which does not directly satisfy Hypothesis~\ref{hyp: geom condition on domain with holes} but we can apply this remark to get lower bounds on $\exeigpo$.
\begin{figure}[H]
    \centering
    \includegraphics[width=0.6\linewidth]{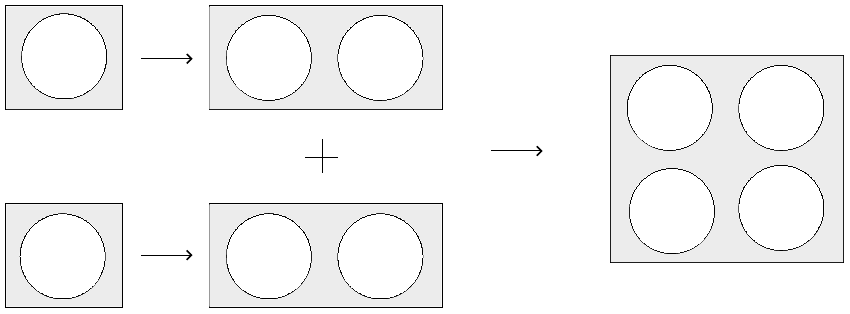}
    \caption{Example I\!V}
    \label{fig:ExampleIV}
\end{figure}
\end{remark}
\noi
The following theorem emphasises the role of the partition parameter $\Rpart$ in the lower bounds of Theorem~\ref{thm: induction method bounds for multi holes}, for $n\geq 3$ and $p\in\{2,\dots,n-1\}$.
\begin{thm}\label{thm: dom with small eigval multi holes}
Let $n\geq 2$ and $p\in \{1,\dots,n-1\}$. Given $\holes\in\NN$, there exists a family of Euclidean $C^1$\nobreakdash-domains with convex outer boundary and $\holes$ many holes, satisfying Hypothesis~\ref{hyp: geom condition on domain with holes} and having arbitrarily small exact eigenvalue, with the diameter of the domain uniformly bounded and the partition parameter tending to zero.
\end{thm}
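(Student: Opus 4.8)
The plan is to build, for each fixed $\holes\in\NN$, a family of domains obtained by a "chained" version of the single-hole construction of Theorem~\ref{thm: domains with small eigval}, so that the partition parameter $\Rpart$ degenerates while the extra dimension of cohomology survives in the limit. Concretely, I would start from the domain $\ann^p_\eps$ of Section~\ref{sec: small eigvals} and, instead of a single spherical hole, insert $\holes$ congruent round holes arranged so that consecutive cells of the Laguerre–Voronoi (here simply Voronoi, since the holes are identical) partition are joined through thin necks of width $\sim \eps^n$. One way to organise this: take $\holes$ unit balls with centres placed along a line (or a convex arc) at mutual distance slightly more than $2$, remove them from a convex region that hugs the balls along thin "bridges" near the equatorial slices $\{r=\tfrac12\}$ exactly as in the Region~I/Region~II description, and smoothen the outward-facing joints to get $C^1$ regularity. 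By construction the diameter stays bounded, the largest hole radius is $1$, each Voronoi cell $\mathcal C_i$ is (after adding back its ball) convex, and Hypothesis~\ref{hyp: geom condition on domain with holes} holds because the union of the first $\ell$ cells is again one of these convex "tube-with-bumps" regions; meanwhile $R_c(\mathcal C_i)\sim\eps^n\to0$, so $\Rpart\to0$.

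Next I would identify the limiting domain $\ann^p_0$ as $\eps\to0$: collapsing the bridges disconnects (for $p=0$) or, for $p\ge 1$, pinches the region so that $\ann^p_0$ is homotopy equivalent to a wedge/connected sum whose $p$-th Betti number strictly exceeds that of $\ann^p_\eps$. The key point is the same as in Section~\ref{sec: small eigvals}: $H^p(\ann^p_\eps)$ has one dimension less than $H^p(\ann^p_0)$, and this extra class is represented on $\ann^p_0$ by the explicit harmonic form $\om_p=\alpha_1\cdots\alpha_p\,d\theta_1\wedge\dots\wedge d\theta_p$ supported away from the pinch loci, satisfying the absolute boundary conditions because the $\theta_j$-directions are tangent to $\pa\ann^p_0$. (For $p=0$ one uses the locally constant function taking distinct values on the two components created in the limit.) Then, exactly as before, I set $\om_{p,\eps}\df\chi_\eps(r)\,\om_p$ with $\chi_\eps$ a cutoff rising from $0$ to $1$ on $[\tfrac{\eps}{3},\tfrac{2\eps}{3}]$ and $\|\chi'\|_\infty\le K\eps^{-1}$, so that $\om_{p,\eps}$ is coclosed, tangential to the boundary, (and orthogonal to the constants when $p=0$), while $d\om_{p,\eps}=\chi_\eps'(r)\,dr\wedge\om_p$ is supported in $\{\tfrac{\eps}{3}\le r\le\tfrac{2\eps}{3}\}$.

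The remaining step is the Rayleigh-quotient estimate. The denominator $\int|\om_{p,\eps}|^2$ is bounded below independently of $\eps$ because $\chi_\eps\equiv1$ on $[\tfrac12,\infty)$ and $\om_p$ is nonzero there, uniformly over the finitely many holes. For the numerator, the support of $d\om_{p,\eps}$ meets each of the $\holes$ neck regions in a set of volume $\Appr{\eps^{n+p+1}}$ by the same slicing computation as in Section~\ref{sec: small eigvals} (the factor $r^p\,dr$ against a transverse $(n-p-1)$-dimensional slice of thickness $\sim\eps^n$), so summing over the $\holes$ necks gives $\vol(\supp d\om_{p,\eps})\le K(\holes)\,\eps^{n+p+1}$ and hence $\int|d\om_{p,\eps}|^2\le K(\holes)\,\eps^{-2p-2}\eps^{n+p+1}=K(\holes)\,\eps^{\,n-p-1}\to0$ since $p\le n-1$; for $p=n-1$ one instead tracks the logarithmically-weighted version or thickens the neck slightly, but for the coexact spectrum one uses $\coeig{p}{1}$ with $p\le n-2$ and the duality $\coeig{p}{1}=\exeig{p+1}{1}$ as in Section~\ref{sec: small eigvals}, which is exactly how $\exeigpo$ for all $p\in\{1,\dots,n-1\}$ is reached. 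Plugging $\om_{p,\eps}$ into the variational characterisation for $\coeig{p}{1}$ (respectively $\exeigpo$) then forces the eigenvalue to $0$ as $\eps\to0$.

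The main obstacle I anticipate is purely geometric bookkeeping rather than analytic: one must verify that the multi-hole domain can be arranged so that simultaneously (a) every Voronoi cell plus its ball is convex, (b) all partial unions $\bigcup_{i\le\ell}(\mathcal C_i\cup\BB_{c_i,r_i})$ are convex so that Hypothesis~\ref{hyp: geom condition on domain with holes} holds, (c) the global outer boundary is convex and only $C^1$ at the controlled joints, and (d) the pinching increases $b_p$ by exactly the amount needed for $\om_p$ to define a genuine extra cohomology class on $\ann^p_0$. Choosing the hole centres collinear (or along a short convex arc) and making each bridge a scaled copy of the single-hole bridge of Section~\ref{sec: small eigvals} handles (a)–(c); for (d) one checks that $\ann^p_0$ retracts onto a space with the correct homology by a Mayer–Vietoris argument across the necks. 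Once the geometry is pinned down, the analytic estimate is a verbatim repetition of the single-hole case with an extra harmless factor $\holes$.
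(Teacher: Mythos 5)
Your plan is genuinely different from the paper's, and its crux is exactly the point you dismiss as ``geometric bookkeeping'': the chained multi-hole pinching cannot be realised with a single convex outer boundary. In the single-hole construction of Section~\ref{sec: small eigvals}, the small eigenvalue comes from the outer boundary collapsing onto the hole along the \emph{whole} band $\{r\le \tfrac12\}$ of its sphere; it is this large pinching set that makes the limit homotopic to $\SSp$ and produces the extra class represented by $\om_p$. If the boundary of a convex domain lies within $\eps^n$ of that band for one unit ball, then the supporting half-spaces at those boundary points confine the entire domain to a lens of size roughly $2$ around that ball's centre (at most $\approx 2$ in the $\RR^{p+1}$ directions and $\approx 1$ in the $\RR^{n-p-1}$ directions, which is exactly why $\ann^p_\eps$ closes up near $r\approx 2$). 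Such a domain cannot contain a second disjoint unit ball, whichever direction you place it in, so you cannot hug the $\{r\le\tfrac12\}$ band of more than one hole simultaneously: your picture of each Voronoi cell carrying its own Region~I/Region~II bridge is geometrically impossible. The fallback implicit in your description --- thin necks only \emph{between} consecutive balls, with centres at distance slightly more than $2$ --- does make $\Rpart\to 0$, but pinching between two balls creates no cohomology in degree $p\le n-2$ in the limit, so there is no harmonic form to cut off and the Rayleigh-quotient argument never gets started; this is precisely where your Mayer--Vietoris ``check of (d)'' would fail. (Your handling of the top degree is also off: for $p=n-1$ the exponent $\eps^{n-p-1}$ does not decay, and no log-weight or neck-thickening fixes this; the correct move is the degree shift, working with $\ann^{p-1}_\eps$ so that $\exeig{p}{1}=\coeig{p-1}{1}$ with $p-1\le n-2$.)

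The paper avoids all of this by pinching at \emph{one} hole only and making the remaining $\holes-1$ holes spectrally negligible: it takes the already-built family $\ann^{p-1}_\eps$ of Section~\ref{sec: small eigvals}, removes $\holes-1$ additional balls of radius $r$ centred on the $x_1$\nobreakdash-axis inside the domain, checks Hypothesis~\ref{hyp: geom condition on domain with holes} for this collinear configuration, and invokes the Ann\'e--Colbois spectral convergence (Proposition~\ref{prop: anne colbois spectral convergence adaptation}) to get $\lim_{r\to 0}\exeigpo(\ann^{p-1}_{\eps,r})=\exeigpo(\ann^{p-1}_\eps)=\coeig{p-1}{1}(\ann^{p-1}_\eps)$, which tends to $0$ with $\eps$ by Theorem~\ref{thm: domains with small eigval}; a diagonal choice of $(\eps,r)$ then gives bounded diameter, bounded largest hole, $\Rpart\to 0$, and arbitrarily small $\exeigpo$. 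If you want to keep your direct approach, you would have to redesign the geometry so that only one cell carries the boundary-to-hole pinching (at which point you have essentially reproduced the paper's construction) and supply the limit-cohomology computation you currently only sketch.
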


\parnoi
We proceed with the proofs of the above theorems.

\parnoi
To prove Theorem~\ref{thm: mcgowan method bounds for multi holes}, we consider an open cover of $\dom$ given by the Laguerre-Voronoi partition and apply certain generalised versions of McGowan's gluing lemma (see Lemma~\ref{lem: various mcGow lemmas} and \hyperlink{appendix}{Appendix}) in combination with the lower bounds~\eqref{longer exprn of main lower bd p>=2}, \eqref{longer exprn of main thm p=1} of Theorem~\ref{thm: first abs eigval lower bds for forms on annular} applied to the cells.

\parnoi
\textbf{Proof of Theorem~\ref{thm: mcgowan method bounds for multi holes}. }
\noi
The cells $\{\mathcal{C}_i\}$ from the Laguerre-Voronoi partition have trivial cohomologies for $p\in \{1,\dots,n-2\}$. Let $\{\celleps{i}\}$ be smooth extensions of $\{\mathcal{C}_i\}$ around the parts of their outer boundaries in the interior of $\dom$, small enough not to intersect the hole from any neighbouring cell, such that $\{\celleps{i}\cup \BB_{c_i,r_i}\}$ are still convex and contained in $\Dom$.
\parnoi
Observe that the diameters,
\eqn{
\diam\left(\celleps{i}\right)\leq \diam(\dom)=D,
}
and the contact radii
\eqn{
R_c\left(\celleps{i}\right) \geq R_c(\mathcal{C}_i)\geq \Rpart.
}
Then we have from~\eqref{longer exprn of main lower bd p>=2} that, for $n\geq 3$ and $p\in \{2,\dots n-1\}$,
\begin{align}
\exeigpo\left(\celleps{i}\right) &\geq K \frac{1}{\diam\left(\celleps{i}\right)^2}\left(\frac{R_c\left(\celleps{i}\right)}{\diam\left(\celleps{i}\right)} \right)^{2n^2-n-2}\min\left\{1, \left(\frac{R_c\left(\celleps{i}\right)}{r_i}\right)^{3(n-1)(n+1)}\right\}\notag\\
&\geq K \frac{1}{D^2}\left(\frac{\Rpart}{D} \right)^{2n^2-n-2} \min\left\{1, \left(\frac{\Rpart}{\Rhh}\right)^{3(n-1)(n+1)}\right\},\qquad \forall \, i\in\{1,\dots,\holes\}.\label{lower bd for exeig of celleps p>=2}
\end{align}
Similarly, from~\eqref{longer exprn of main thm p=1} for $n\geq 2$ and $p=1$,
\eqnum{ \label{lower bd for exeig of celleps p=1}
\exeig{1}{1}\left(\celleps{i}\right) \geq K \frac{1}{D^2}\left(\frac{\Rpart}{D}\right)^{n-2}\min\left\{1,\left(\frac{\Rpart}{D}\right)^{3n+2}\left(\frac{\Rpart}{\Rhh}\right)^{7n-5}\right\},\qquad \forall \, i\in\{1,\dots,\holes\}.
}
Since $\{\celleps{i}\cup \BB_{c_i,r_i}\}$ are convex, so is any non-trivial intersection of a subset of them. From the known lower bound for convex domains \cite{Gue04i}, we have for any non-trivial intersection,
\eqnum{\label{lower bd for exeig of celleps intersections}
\exeig{p}{1}\left(\celleps{i_1}\cap\dots\cap\celleps{i_\ell}\right)\geq \frac{K}{\diam\left(\celleps{i_1}\cap\dots\cap\celleps{i_\ell}\right)^2}\geq \frac{K}{D^2},\qquad \forall \,p\in\{0,\dots,n\}.
}
\par
Below, we present certain variants of the Cheeger--McGowan lemma \cite[Lemma~2.3]{mcg93} and \cite[Lemma~1]{GP95}, which we use in this proof in combination with the above lower bounds for the individual cells.
\begin{lem}\label{lem: various mcGow lemmas}
Let $(M^n,g)$ be a compact Riemannian manifold with boundary and $\{U_i\}_{i=1}^{k_0}$ be an open cover with a partition of unity $\{\rho_i\}_{i=1}^{k_0}$ subordinate to it, with
\eqn{
c_\rho\df \max_{i\in \{1,\dots,k_0\}}\Vert \grad\rho_i\Vert^2_{\infty}.
}
Let $U_{i_0\dots i_\ell}$ denote an $\ell$\nobreakdash-intersection \mbox{$U_{i_0}\cap\dots\cap U_{i_\ell}$}, and $k_\ell$ the number of non-trivial $\ell$\nobreakdash-intersections. For convenience, we use the convention $\exeig{p}{1}(\varnothing)=\infty$. Then we have the following lower bounds for the exact eigenvalues of $M$ in terms of the first exact eigenvalues of $\{U_i\}$ and their intersections for $q$\nobreakdash-forms, $q\in\{0,\dots,p-1\}$.
\begin{enumerate}
\setlength{\itemsep}{1.5em}
\item For $n\geq 2$ and $p=1$,
\eqn{
\exeig{1}{1+k_1}(M)\geq \frac{1}{\sum_{i=1}^{k_0}\Inv{\exeig{1}{1}(U_i)}}.
}
\item Let $n\geq 3$ and $p=2$. Suppose that $H^1(U_{ij},\RR)=0$ for every non-trivial $1$\nobreakdash-intersection $U_{ij}$. Then
\eqn{
\exeig{2}{1+k_2}(M) \geq \frac{(8k_0)\inv}{\sum_{i=1}^{k_0}\Biggl\{\Inv{\exeig{2}{1}(U_i)}+\sum_{j=1}^{k_0}\Biggl[\left(\frac{c_\rho}{\exeig{1}{1}(U_{ij})}+1\right)\left(\Inv{\exeig{2}{1}(U_i)}+\Inv{\exeig{2}{1}(U_j)}\right)\Biggr]\Biggr\}}.
}
\item Let $n\geq 4$ and $p=3$. Suppose that $H^2(U_{ij},\RR)=0$ for every non-trivial $1$\nobreakdash-intersection $U_{ij}$ and $H^1(U_{ijk},\RR)=0$ for every non-trivial intersection $2$\nobreakdash-intersection $U_{ijk}$. Then
\eqn{
\exeig{3}{1+k_3}(M)\geq \frac{(18k_0^2)\inv}{\aleph},
}
where the denominator $\aleph$ is
\begin{samepage}
{\footnotesize
\begin{align*} \hspace*{3.4em}
\sum_{i=1}^{k_0}&\Biggl\{\Inv{\exeig{3}{1}(U_i)}+\sum_{j=1}^{k_0}\Biggl[\left(\frac{c_\rho}{\exeig{2}{1}(U_{ij})}+1\right)\left(\Inv{\exeig{3}{1}(U_i)}+\Inv{\exeig{3}{1}(U_j)}\right)\\
&+\sum_{k=1}^{k_0}c_\rho\left(\frac{c_\rho}{\exeig{1}{1}(U_{ijk})}+1\right)\biggl\{\frac{\Inv{\exeig{3}{1}(U_i)}+\Inv{\exeig{3}{1}(U_j)}}{\exeig{2}{1}(U_{ij})}+\frac{\Inv{\exeig{3}{1}(U_j)}+\Inv{\exeig{3}{1}(U_k)}}{\exeig{2}{1}(U_{jk})}+\frac{\Inv{\exeig{3}{1}(U_i)}+\Inv{\exeig{3}{1}(U_k)}}{\exeig{2}{1}(U_{ik})}\biggr\}\Biggr]\Biggr\}.
\end{align*}
}
\end{samepage}
\end{enumerate}
\end{lem}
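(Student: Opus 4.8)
The plan is to invoke the boundary-condition-free variational characterisation of the exact spectrum recalled in Section~\ref{sec: prelim}: writing $\Lambda_p$ for the right-hand side of the asserted inequality, it suffices to show that every $(1+k_p)$-dimensional subspace $V$ of the space of $L^2$-integrable exact $p$-forms on $M$ contains a non-zero $\eta$ admitting a primitive $\theta\in\Om^{p-1}(M)$ with $\Vert\theta\Vert^2_{L^2(M)}\le\Lambda_p^{-1}\Vert\eta\Vert^2_{L^2(M)}$. Fix such a $V$ and the partition of unity $\{\rho_i\}$ from the statement.

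First I would attach to each $\eta\in V$ a \v{C}ech--de~Rham tower of local primitives depending linearly on $\eta$. Since $\eta$ is exact on $M$, on each $U_i$ let $\theta_i$ be the (unique) minimal-$L^2$-norm primitive of $\eta|_{U_i}$; it depends linearly on $\eta$, and the variational characterisation of $\exeig{p}{1}(U_i)$ gives $\Vert\theta_i\Vert^2_{L^2(U_i)}\le\exeig{p}{1}(U_i)^{-1}\Vert\eta\Vert^2_{L^2(U_i)}$. On a non-trivial intersection $U_{ij}$ the $(p-1)$-form $\theta_i-\theta_j$ is closed; for $p=1$ it is locally constant, with constant part $c_{ij}$, while for $p\in\{2,3\}$ the hypothesis $H^{p-1}(U_{ij},\RR)=0$ makes it exact, so it has a minimal primitive---a function $f_{ij}$ when $p=2$, a $1$-form $\beta_{ij}$ when $p=3$---of $L^2(U_{ij})$-norm at most $\exeig{p-1}{1}(U_{ij})^{-1/2}\,\Vert\theta_i-\theta_j\Vert_{L^2(U_{ij})}$, which together with $\Vert\theta_i-\theta_j\Vert^2\le 2(\Vert\theta_i\Vert^2+\Vert\theta_j\Vert^2)$ is bounded by $\exeig{p-1}{1}(U_{ij})^{-1}\big(\exeig{p}{1}(U_i)^{-1}+\exeig{p}{1}(U_j)^{-1}\big)\Vert\eta\Vert^2$ up to a universal factor. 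For $p=3$ one further step is required: on a triple $U_{ijk}$ the $1$-form $\beta_{ij}+\beta_{jk}+\beta_{ki}$ is closed, hence exact since $H^1(U_{ijk},\RR)=0$, with minimal primitive $g_{ijk}$ whose norm is controlled by the previous level. Because every choice is the minimal primitive, it is unique and linear in its argument, so the antisymmetry ($f_{ji}=-f_{ij}$, $\beta_{ji}=-\beta_{ij}$, $g_{ijk}$ alternating) and the signed cochain identities hold \emph{exactly}, not merely cohomologically; in particular the residual locally constant functions at the top of the tower are well defined---$c_{ij}$ on the $k_1$ double intersections for $p=1$, $c_{ijk}\df f_{ij}+f_{jk}-f_{ik}$ on the $k_2$ triple intersections for $p=2$, and $c_{ijkl}\df g_{jkl}-g_{ikl}+g_{ijl}-g_{ijk}$ on the $k_3$ quadruple intersections for $p=3$.

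The dimension count then picks a good representative. Collecting the top-level constants over all $\eta\in V$ defines a linear map $V\to\RR^{k_p}$, which has non-trivial kernel because $\dim V=1+k_p>k_p$; choose $0\ne\eta\in V$ in it, so all of these constants vanish identically. For this $\eta$ the local data patch into a global primitive via the collating formula
\eqn{
\theta\df\sum_i\rho_i\,\theta_i-\sum_{i,j}\rho_i\,d\rho_j\wedge\beta_{ij}+\sum_{i,j,k}\rho_i\,d\rho_j\wedge d\rho_k\,g_{ijk},
}
with the obvious truncations when $p=1$ (only the first sum, and then $\theta$ is simply the common value of the $\theta_i$ on overlaps) and $p=2$ (the first two sums, with $\beta_{ij}$ replaced by the function $f_{ij}$). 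Using $\sum_i\rho_i\equiv 1$, $\sum_i d\rho_i\equiv 0$, the cochain identities above, and the vanishing of the top-level constants, the standard \v{C}ech--de~Rham computation---precisely the manipulation carried out in \cite{mcg93,GP95}---shows that all error terms cancel and $d\theta=\eta$ on $M$.

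It remains to estimate $\Vert\theta\Vert_{L^2(M)}$. I would bound each of the (at most three) groups of summands separately by the triangle inequality, apply Cauchy--Schwarz over the at most $k_0$ indices appearing in each sum, use $\Vert d\rho_i\Vert_\infty^2\le c_\rho$, and exploit the elementary facts $\big|\sum_i\rho_i\,\varphi_i\big|^2\le\sum_i\rho_i\,|\varphi_i|^2$ and $\int_M(\,\cdot\,)\le\sum_i\int_{U_i}(\,\cdot\,)$ valid for a cover; feeding in the norm cascade from the tower yields inequalities of precisely the stated shape---cleanly $\big(\sum_i\exeig{1}{1}(U_i)^{-1}\big)^{-1}$ for $p=1$, and for $p=2,3$ the expressions whose combinatorial prefactors $8k_0$ and $18k_0^2$ and nested sums simply record the number of terms surviving the repeated Cauchy--Schwarz steps. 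I expect the only genuinely delicate point to be the verification in the previous paragraph that, once the top-level constants are killed, every remaining term in $d\theta$ telescopes away; the norm bookkeeping, though lengthy for $p=3$, is routine, and the same scheme---one extra descent level per unit increase of $p$---is what the Appendix pushes through for general degree.
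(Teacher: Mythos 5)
Your proposal is correct and follows essentially the same route as the paper's proof: Dodziuk's boundary-condition-free variational characterisation, a \v{C}ech--de~Rham cascade of minimal-norm local primitives whose top-level locally constant obstruction is killed by the dimension count $1+k_p>k_p$, collation by the partition of unity, and Cauchy--Schwarz bookkeeping with $c_\rho$ feeding the eigenvalue bounds of the $U_i$, $U_{ij}$, $U_{ijk}$. The only (cosmetic) differences are that you test against an arbitrary $(1+k_p)$-dimensional subspace rather than the span of the first eigenforms, and you write the corrected primitive as a single explicit collation formula instead of the paper's recursive construction via $\etah$, $\etab$, $\tau$; both variants yield the same estimate.
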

\noi
Part~$(2)$ of this lemma for the case $n\geq 3$, $p=2$ is a direct consequence of \cite[Lemma~2.3]{mcg93}, where it was originally stated for closed manifolds but the proof also holds for compact manifolds with boundary. Part~$(1)$ for $n\geq 2$, $p=1$ is a straight forward adaptation of McGowan's method and is known in the literature, although there seems to be no  mention of this lower bound explicitly. The expression in the lower bound of Part~$(3)$ for $n\geq 4$, $p=3$ is a new result that we derive adapting McGowan's proof, motivated by the concluding remark of \cite[Section~2]{mcg93} that it should be possible to generalise their lemma to higher degree forms. The proof of this case is somewhat technical and is provided in the \hyperlink{appendix}{Appendix}.
\begin{remark} \label{rmk: mcGow for higher p}
Using similar arguments for higher degrees $p$, one can derive lower bounds on $\exeig{p}{1+k_p}(M)$ with more sophisticated expressions, supposing that the $q^{\text{th}}$ cohomology of every non-trivial $(p-q)$\nobreakdash-intersection is trivial for all $q\in \{1,\dots,p-1\}$. Specifically, the expression for a lower bound on $\exeig{p}{1+k_p}(M)$ will have a nested combinations of the inverses of $\exeig{p-q}{1}(U_{i_0\dots i_q})$ for $q\in\{0,\dots,p-1\}$, with a dependence of $k_0^{1-p}$ on the number of elements in the open cover of $M$, and the dependence on the partition of unity reflected in the factors 
\eqn{
\left(\frac{c_\rho}{\exeig{p-1}{1}(U_{i_0i_1})}+1\right),\,c_\rho\left(\frac{c_\rho}{\exeig{p-2}{1}(U_{i_0i_1i_2})}+1\right),\dots,\,c_\rho^{p-2}\left(\frac{c_\rho}{\exeig{p-3}{1}(U_{i_0i_1i_2i_3})}+1\right)
}
in the denominator.
\end{remark}
\par
Returning to the main proof of Theorem~\ref{thm: mcgowan method bounds for multi holes}, consider the open cover $\left\{\intr\celleps{i}\right\}_{i=1}^\holes$ of $\dom$ and a partition of unity $\{\rho_i\}_{i=1}^{\holes}$ subordinate to it such that
\eqn{
c_\rho\df \max_{i\in \{1,\dots,\holes\}}\Vert \grad\rho_i\Vert^2_{\infty}\leq \frac{K}{\Rpart^2}.
}
For $n\geq 2$ and $p=1$, it follows from part~$(1)$ of Lemma~\ref{lem: various mcGow lemmas} and the lower bound~\eqref{lower bd for exeig of celleps p=1} that
\eqn{
\exeig{1}{1+k_1}(\dom) \geq \frac{K}{\holes} \frac{1}{D^2}\left(\frac{\Rpart}{D}\right)^{n-2}\min\left\{1,\left(\frac{\Rpart}{D}\right)^{3n+2}\left(\frac{\Rpart}{\Rhh}\right)^{7n-5}\right\}.
}
For $n\geq 3$ and $p=2$, we use part~$(2)$ of Lemma~\ref{lem: various mcGow lemmas}, the lower bounds~\eqref{lower bd for exeig of celleps p>=2}~and~\eqref{lower bd for exeig of celleps intersections}, and that $c_\rho\leq K \Rpart^{-2}$, to get
\eqn{
\exeig{2}{1+k_2}(\dom)\geq \frac{K}{\holes^3}\frac{1}{D^2} \left(\frac{\Rpart}{D}\right)^{2n^2-n}\min\left\{1, \left(\frac{\Rpart}{\Rhh}\right)^{3(n-1)(n+1)}\right\}.
}
Likewise, for $n\geq 3$ and $p=3$, we use part~$(3)$ of Lemma~\ref{lem: various mcGow lemmas} to get
\eqn{
\exeig{3}{1+k_3}(\dom)\geq 
\frac{K}{\holes^5}\frac{1}{D^2} \left(\frac{\Rpart}{D}\right)^{2n^2-n+2}\min\left\{1, \left(\frac{\Rpart}{\Rhh}\right)^{3(n-1)(n+1)}\right\}.
}
For $n\geq 3$ and $p\in\{2,\dots,n-1\}$, in general, we use expression for higher degree forms described in Remark~\ref{rmk: mcGow for higher p}. Using the lower bounds~\eqref{lower bd for exeig of celleps p>=2}~and~\eqref{lower bd for exeig of celleps intersections}, and that 
\eqn{
c_\rho^{p-2}\leq K\max\{1,(\Rpart^{-2})^{p-2}\},
}
the denominator in the expression can be bounded above by
\eqn{
K \max\left\{1,(\Rpart^{-2})^{p-2}\right\}\left(\frac{\Rpart^{-2}}{D^{-2}}+1\right)\holes^p \left(\Inv{D^{-2}}\right)^{p-2} \left(\frac{1}{D^2}\left(\frac{\Rpart}{D} \right)^{2n^2-n-2} \min\left\{1, \left(\frac{\Rpart}{\Rhh}\right)^{3(n-1)(n+1)}\right\}\right)^{-1}
}
which yields
\begin{align*}
\exeig{p}{1+k_p}(\dom)&\geq \frac{K}{\holes^{2p-1}}\frac{\Rpart^2\min\left\{1,\Rpart^{2(p-2)}\right\}}{D^{2(p-1)}}\frac{1}{D^2} \left(\frac{\Rpart}{D}\right)^{2n^2-n-2}\min\left\{1, \left(\frac{\Rpart}{\Rhh}\right)^{3(n-1)(n+1)}\right\}\\
&\geq \frac{K}{\holes^{2p-1}}\frac{1}{D^2} \left(\frac{\Rpart}{D}\right)^{2n^2-n-2+2(p-1)}\min\left\{1,\Inv{\Rpart^{2(p-2)}}\right\}\min\left\{1, \left(\frac{\Rpart}{\Rhh}\right)^{3(n-1)(n+1)}\right\}\\
&\geq \frac{K}{\holes^{2p-1}}\frac{1}{D^2} \left(\frac{\Rpart}{D}\right)^{2n^2+n-6}\min\left\{1,\Inv{\Rpart^{2(n-3)}}\right\}\min\left\{1, \left(\frac{\Rpart}{\Rhh}\right)^{3(n-1)(n+1)}\right\},
\end{align*}
giving the required lower bound, using that $\Rhh \le D$.
\qed
\par
In order to obtain lower bounds for the first exact eigenvalue, we instead use directly Lemma~\ref{lem: modified McGowan} by starting with a cell in the open cover and adding cells inductively such that the convexity of the outer boundary of the intermediate domain is preserved at each step (there exists such an ordering guaranteed by Hypothesis~\ref{hyp: geom condition on domain with holes}), and invoking once again the lower bounds~\eqref{longer exprn of main lower bd p>=2}, \eqref{longer exprn of main thm p=1} of Theorem~\ref{thm: first abs eigval lower bds for forms on annular} for the cells being added. This procedure introduces the dependence of the exponents of the ratio $\Rpart/D$ on the number of holes. As before, for $p=1$, the gluing lemma needs to be replaced by Theorem~\ref{thm: neumann lower bd for func in terms of union} and we adapt our approach accordingly, as described in the proof below.
\parnoi
\textbf{Proof of Theorem~\ref{thm: induction method bounds for multi holes}. }As before, consider the open cover $\{\intr\celleps{i}\}_{i=1}^\holes$ of $\dom$, where $\{\mathcal{C}_i\}_{i=1}^\holes$ are ordered as given by Hypothesis~\ref{hyp: geom condition on domain with holes}, and such that $\bigcup_{i=1}^\ell \,\celleps{i} \cup \, \BB_{c_i,r_i}$ is convex for each \mbox{$\ell \in \{1,\dots,\holes\}$}. Let $\{\rho_i\}_{i=1}^{\holes}$ be a partition of unity subordinate to this open cover, satisfying
\eqn{
c_\rho\df \max_{i\in \{1,\dots,\holes\}}\Vert \grad\rho_i\Vert^2_{\infty}\leq \frac{K}{\Rpart^2}.
}
\parnoi
When $n\geq 3$ and $p\in\{2,\dots,n-1\}$, we proceed to prove the following using induction: For each $\ell\in\{1,\dots,\holes\}$,
\eqn{
\exeigpo\left(\celleps{1}\cup\dots\cup\celleps{\ell}\right)\geq K \frac{1}{D^2} \left(\frac{\Rpart}{D}\right)^{2n^2-n-2+2(\ell-1)}\min\left\{1, \left(\frac{\Rpart}{\Rhh}\right)^{3(n-1)(n+1)}\right\}.
}
The above statement is true for $\ell=1$ from the lower bound~\eqref{lower bd for exeig of celleps p>=2}. Suppose that the induction hypothesis holds for some $\ell \in \{1,\dots,\holes-1\}$.  Using Lemma~\ref{lem: modified McGowan} for the open cover of $\{\celleps{1}\cup\dots\cup\celleps{\ell+1}\}$ given by $\{\celleps{1}\cup\dots\cup\celleps{\ell}, \celleps{\ell+1}\}$, and that $c_\rho\leq K R_c^{-2}$, we get
\eqn{
\exeigpo\left(\celleps{1}\cup\dots\cup\celleps{\ell+1}\right)\geq K \frac{1}{D^2} \left(\frac{\Rpart}{D}\right)^{2n^2-n-2+2\ell}\min\left\{1, \left(\frac{\Rpart}{\Rhh}\right)^{3(n-1)(n+1)}\right\},
}
hence the induction statement is true for $\ell+1$. The first part of Theorem~\ref{thm: induction method bounds for multi holes} follows.
\parnoi
When $n\geq 2$ and $p=1$, we proceed as follows, again using induction: For each $\ell\in\{1,\dots,\holes\}$,
\eqn{
\exeig{1}{1}\left(\celleps{1}\cup\dots\cup\celleps{\ell}\right)\geq K \frac{1}{D^2} \left(\frac{\Rpart\rhh^{n-1}}{D^n}\right)^{\ell-1}\left(\frac{\Rpart}{D}\right)^{n-2}\min\left\{1,\left(\frac{\Rpart}{D}\right)^{3n+2}\left(\frac{\Rpart}{\Rhh}\right)^{7n-5}\right\}.
}
The above statement is true for $\ell=1$ from the lower bound~\eqref{lower bd for exeig of celleps p=1}. Suppose that the induction hypothesis holds for some $\ell \in \{1,\dots,\holes-1\}$. Using Theorem~\ref{thm: neumann lower bd for func in terms of union},
\eqn{
\exeig{1}{1}\left(\celleps{1}\cup\dots\cup\celleps{\ell+1}\right)\geq \frac{1}{32} \frac{\vol\left((\celleps{1}\cup\dots\cup\celleps{\ell})\cap \celleps{\ell+1}\right)}{\vol\left(\celleps{1}\cup\dots\cup\celleps{\ell+1}\right)}\min\left\{\exeig{1}{1}\left(\celleps{1}\cup\dots\cup\celleps{\ell}\right),\exeig{1}{1}\left(\celleps{\ell+1}\right)\right\}.
}
We have that
\eqn{
\vol\left(\celleps{1}\cup\dots\cup\celleps{\ell+1}\right)\leq K D^n,
}
while the volume term in the numerator may be estimated as
\eqn{
\vol\left((\celleps{1}\cup\dots\cup\celleps{\ell})\cap \celleps{\ell+1}\right) \geq K \Rpart r_{\ell+1}^{n-1} \geq K \Rpart\rhh^{n-1},
}
since the extensions $\celleps{1},\dots,\celleps{\ell}$ may be chosen large enough such that their union contains at least the subset of $\mathcal{C}_{\ell+1}$ given by the cylinder aligned along the perpendicular from $c_{\ell+1}$ to the hyperplane of partition, of radius equal to $r_{\ell+1}$ and height $\Rpart$, with its centre at a distance of $r_{\ell+1}+\frac{\Rpart}{2}$ from $c_{\ell+1}$. Using these volume estimates along with the induction hypothesis and the lower bound~\eqref{lower bd for exeig of celleps p=1}, we get
\eqn{
\exeig{1}{1}\left(\celleps{1}\cup\dots\cup\celleps{\ell+1}\right)\geq K \frac{1}{D^2} \left(\frac{\Rpart\rhh^{n-1}}{D^n}\right)^{\ell}\left(\frac{\Rpart}{D}\right)^{n-2}\min\left\{1,\left(\frac{\Rpart}{D}\right)^{3n+2}\left(\frac{\Rpart}{\Rhh}\right)^{7n-5}\right\},
}
hence the induction statement is true for $\ell+1$. The second part of Theorem~\ref{thm: induction method bounds for multi holes} follows.
\qed
\parnoi
\textbf{Proof of Theorem~\ref{thm: dom with small eigval multi holes}. }Fix $n\geq 2$, $p\in\{1, \dots, n-1\}$, and $\holes \in \NN$. Consider the family of domains $\{\ann^{p-1}_\eps\}_{\eps\in[0,\half)}$ as in the proof of Theorem~\ref{thm: domains with small eigval} in  Section~\ref{sec: small eigvals}, and the cartesian coordinates $(x_1,\dots,x_n)$ of $\RR^n=\RR^{n-p}\times\RR^p$, where $\RR^{n-p}$ and $\RR^p$ are as before. Recall that Section~\ref{sec: small eigvals} approaches the proof of Theorem~\ref{thm: domains with small eigval} using the corresponding coexact eigenvalues for convenience, whereas we now directly analyse the exact eigenvalues of degree $p$, hence the choice $\{\ann^{p-1}_\eps\}$ associated with $p$. For sufficiently small $r_0>0$ depending on $\holes$, construct the family of domains $\{\ann^{p-1}_{\eps,r}\}$ from $\{\ann^{p-1}_\eps\}$ by removing $\holes-1$ non-overlapping balls of radius $r\in(0,r_0)$ contained in the interior of $\ann^{p-1}_0$ and centered on the $x_1$\nobreakdash-axis.
\begin{figure}[H]
    \centering
    \includegraphics[width=0.6\linewidth]{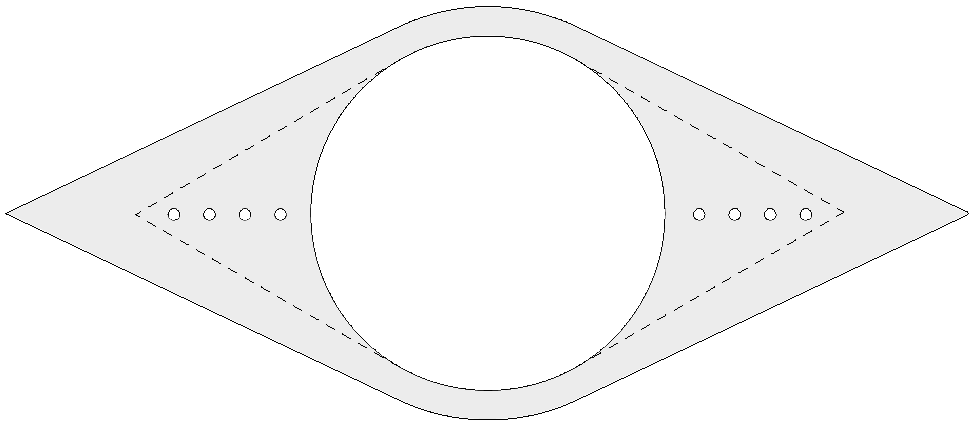}
    \caption{}
\end{figure}
\parnoi
Clearly, the domains $\{\ann^{p-1}_{\eps,r}\}$ satisfy Hypothesis~\ref{hyp: geom condition on domain with holes}, and have uniformly bounded diameter and radius of the largest hole, with the partition parameter tending to zero as $\eps\to 0$. It then follows from Proposition~\ref{prop: anne colbois spectral convergence adaptation} and Theorem~\ref{thm: domains with small eigval} that
\eqn{
\lim_{\eps\to 0}\lim_{r\to 0} \exeigpo (\ann^{p-1}_{\eps,r})=\lim_{\eps\to 0} \exeigpo (\ann^{p-1}_\eps)=\lim_{\eps\to 0} \coeig{p-1}{1}(\ann^{p-1}_\eps)=0,
}
thereby giving rise to arbitrarily small exact eigenvalues.
\qed

\hypertarget{appendix}{}
\section*{Appendix: Generalisation of McGowan's lemma}
We give a proof of part~$(3)$ of  Lemma~\ref{lem: various mcGow lemmas}, adapting that of~\cite[Lemma~2.3]{mcg93} to the case of $n\geq 4$, $p=3$. We begin by recalling a variational characterisation for the exact eigenvalues with absolute boundary conditions~\cite[Proposition~3.1]{dod82} (see \cite[Proposition~2.1]{mcg93} for a proof): For $p\in \{1,\dots,n\}$,
\eqnum{\label{varchar of dodziuk}
\exeig{p}{k}(M)=\inf_{V_k} \sup_{\eta\in V_k\setminus \{0\}}\, \sup_{\theta\,:\, \eta=d\theta} \frac{\Vert \eta \Vert^2_{L^2\Om^p(M)}}{\Vert \theta \Vert^2_{L^2\Om^{p-1}(M)}},
}
where $V_k$ ranges over all $k$\nobreakdash-dimensional subspaces of the space of $L^2$\nobreakdash-integrable exact $p$\nobreakdash-forms on $M$. Let $\mathcal{E}_k$ denote the span of the eigenforms corresponding to $\left\{\exeig{3}{1},\dots,\exeig{3}{k}\right\}$. Then for any $\phi \in \mathcal{E}_{1+k_3}$ and $\psi$ such that $d\psi=\phi$,
\eqn{
\exeig{3}{1+k_3}\geq \frac{\Vert \phi \Vert^2_{L^2\Om^3(M)}}{\Vert \psi \Vert^2_{L^2\Om^2(M)}}.
}
So, it will suffice to construct such a pair with the above quotient of the (squares of the) $L^2$\nobreakdash-norms bounded below by appropriate terms. Given a $\phi$, we can locally construct $\psi$ using the eigenvalues of the pieces $\{U_i\}$ and their intersections. We then use a partition of unity for the local-to-global transition employing the \v{C}ech--de Rham formalism, which enforces a choice of $\phi$ to be made in the proof, as we shall see.
\begin{figure}[H]
\centering
\begin{tikzcd}
&\vdots&\vdots&\vdots&\\
0 \arrow[r] &\Om^2(M)\arrow[u,"d"]\arrow[r,"\restr"] &\prod\Om^2(U_i)\arrow[u,"d"]\arrow[r,"\diff"]&\prod\Om^2(U_{ij})\arrow[u,"d"]\arrow[r,"\diff"]&\dots\\
0 \arrow[r] &\Om^1(M)\arrow[u,"d"] \arrow[r,"\restr"] &\prod\Om^1(U_i)\arrow[u,"d"]\arrow[r,"\diff"]&\prod\Om^1(U_{ij})\arrow[u,"d"]\arrow[r,"\diff"]&\dots\\
0 \arrow[r] &\Om^0(M)\arrow[u,"d"] \arrow[r,"\restr"] &\prod\Om^0(U_i)\arrow[u,"d"]\arrow[r,"\diff"]&\prod\Om^0(U_{ij})\arrow[u,"d"]\arrow[r,"\diff"]&\dots\\
&&\check{C}^0(\{U_i\},\RR)\arrow[u,"\iota"]\arrow[r]&\check{C}^1(\{U_i\},\RR)\arrow[u,"\iota"]\arrow[r]&\dots\\
&&0\arrow[u]&0\arrow[u]
\end{tikzcd}
\caption{\v{C}ech--de Rham formalism}
\label{fig: cech-deRham}
\end{figure}
\parnoi
For $\om\in \Om^p(M)$, we denote by $\om_i$ the restriction of $\om$ to $U_i$. Let $\restr:\Om^p(M)\to \prod_{i=1}^{k_0}\Om^p(U_i)$ denote the restriction map, and $\diff$
denote the difference operator as defined in~\cite[Section~2.3]{mcg93}:
\begin{align*}
\diff:\prod\Om^p(U_{i_0\dots i_m})&\to \prod\Om^p(U_{i_0\dots i_{m+1}})\\
(\diff \om)_{i_0\dots i_{m+1}}&\df\sum_{s=0}^{m+1}(-1)^s\, \om_{i_0\dots \hat{i}_s \dots i_{m+1}},
\end{align*}
where the caret in the subscript means omission, and we only consider non-trivial intersections of orders $m$ and $m+1$ in the above definition. The rows in the diagram of Figure~\ref{fig: cech-deRham} are exact, except the last row
\eqn{
\check{C}^0(\{U_i\},\RR) \xrightarrow{\quad}\check{C}^1(\{U_i\},\RR)\xrightarrow{\quad}\check{C}^2(\{U_i\},\RR)\xrightarrow{\quad}\dots,
}
with $\check{C}^m(\{U_i\},\RR)$ denoting the space of locally constant functions on non-trivial $m$\nobreakdash-intersections of $\{U_i\}$. This sequence is a differential complex, and its homology is called the \emph{\v{C}ech cohomology} of the cover $\{U_i\}_{i=1}^{k_0}$.
See~\cite[Chapter~8]{bott-tu} for a detailed description of the \v{C}ech-de~Rham formalism.
\parnoi
We start with a $3$\nobreakdash-form $\phi\in \mathcal{E}_{1+k_3}$, the choice of which would be described later in this proof. As its restrictions $\{\phi_i\}_{i=1}^{k_0}$ are exact on $\{U_i\}_{i=1}^{k_0}$, there exist unique coexact $2$\nobreakdash-forms $\{\psi_i\}_{i=1}^{k_0}$ (with appropriate boundary conditions described in the Hodge-decomposition~\eqref{hodge decomp from taylor pde}) such that $d\psi_i=\phi_i$ for each $i\in\{1,\dots,k_0\}$. Since $\psi_i$ minimises the $L^2$\nobreakdash-norm among all the primitives of $\phi_i$ on $U_i$, the variational characterisation~\eqref{varchar of dodziuk} gives
\eqnum{\label{exeig3-1 ineq}
\exeig{3}{1}(U_i)\leq \frac{\Vert \phi_i \Vert^2_{L^2\Om^3(M)}}{\Vert \psi_i \Vert^2_{L^2\Om^2(M)}}, \qquad \forall \,i\in \{1,\dots,k_0\}.
}
Consider the $2$\nobreakdash-forms $\{\om_{ij}\}\df \diff\{\phi_i\}$ on non-trivial $1$\nobreakdash-intersections $\{U_{ij}\}$, i.e., $\om_{ij}=\psi_j-\psi_i$. They are closed because
\eqn{
d\om_{ij}=d\psi_j-d\psi_i=\phi-\phi=0,
}
and since $H^2(U_{ij},\RR)=0$ for all $U_{ij}$ by hypothesis, there exist unique coexact $1$\nobreakdash-forms $\{\eta_{ij}\}$ (with appropriate boundary conditions) on $\{U_{ij}\}$ such that $\om_{ij}=d\eta_{ij}$. Like before, we have from the variational characterisation~\eqref{varchar of dodziuk} that
\eqnum{\label{exeig2-1 ineq}
\exeig{2}{1}(U_{ij})\leq \frac{\Vert \om_{ij} \Vert^2_{L^2\Om^2(M)}}{\Vert \eta_{ij} \Vert^2_{L^2\Om^1(M)}},
}
for every non-trivial $1$\nobreakdash-intersection $U_{ij}$. Now, consider the $1$\nobreakdash-forms $\{\nu_{ijk}\}\df\diff \{\eta_{ij}\}$ on non-trivial $2$\nobreakdash-intersections $\{U_{ijk}\}$. Since $d$, $\diff$ commute and $\diff^2=0$, the $\{\nu_{ijk}\}$ are closed:
\eqn{
d\{\nu_{ijk}\}=d\diff \{\eta_{ij}\}=\diff d\{\eta_{ij}\}=\diff\{\om_{ij}\} =\diff^2 \{\phi_i\}=0.
}
Again, since $H^1(U_{ijk},\RR)=0$ for all $U_{ijk}$, there exist unique coexact functions $\{\nuh_{ijk}\}$ on $\{U_{ijk}\}$ such that $\nu_{ijk}=d\nuh_{ijk}$, and the variational characterisation gives
\eqnum{\label{exeig1-1 ineq}
\exeig{1}{1}(U_{ijk})\leq \frac{\Vert \nu_{ijk} \Vert^2_{L^2\Om^1(M)}}{\Vert \nuh_{ijk} \Vert^2_{L^2\Om^0(M)}}.
}
\begin{figure}[H]
\centering
\begin{tikzcd}[column sep={4.8em,between origins}, row sep={4em,between origins}]
\text{$3$\nobreakdash-forms} &\{\phi_i\}  &0 &&&&\{\phi_i\}  &0\\
\text{$2$\nobreakdash-forms} &\{\psi_i\} \arrow[u,"d"] \arrow[r,"\diff"] &\{\om_{ij}\} \arrow[u,"d"] \arrow[r,"\diff"] &0 &&&\{\psi_i\} \arrow[u,"d"] \arrow[r,"\diff"] &\{\om_{ij}\} \arrow[u,"d"] \arrow[r,"\diff"] &0 \\
\text{$1$\nobreakdash-forms} & &\{\eta_{ij}\} \arrow[u,"d"] \arrow[r,"\diff"] &\{\nu_{ijk}\} \arrow[u,"d"] \arrow[r,"\diff"] &0 &&\{\tau_i\}\arrow[r,"\diff"] &\{\etab_{ij}\}\arrow[u,"d"] \arrow[r,"\diff"] &0 \\
\text{Functions} & & \{\etah_{ij}\}\arrow[r,dashed,"\diff"]& \{\nuh_{ijk}\}\arrow[u,"d"]\arrow[r,"\diff"]&\{\beta_{ijkl}\}\arrow[u,"d"]
\end{tikzcd}
\end{figure}
\parnoi
Had there been no non-trivial $3$\nobreakdash-intersections $\{U_{ijkl}\}$ (i.e., $k_3=0$), it would follow from the exactness of the generalised Mayer-Vietoris sequence~\cite[Proposition~8.5]{bott-tu} that there exist $\{\etah_{ij}\}$ such that $\diff\{\etah_{ij}\}=\{\nuh_{ijk}\}$. When $k_3\neq0$, one can still argue that, since the dimension of $\mathcal{E}_{1+k_3}$ is strictly greater than $k_3$, there exists a choice of $\phi\in \mathcal{E}_{1+k_3}$ such that $\{\beta_{ijkl}\}\df \diff\{\nuh_{ijk}\}=0$, which guarantees from the exactness that there exist $\{\etah_{ij}\}$ such that $\diff\{\etah_{ij}\}=\{\nuh_{ijk}\}$. In particular, we can verify that
\eqn{
\etah_{ij}=\sum_{l=1}^{k_0} \rho_l\, \nuh_{lij}
}
would satisfy the required condition. The expression on the RHS is from a homotopy operator~$K$ (see~\cite[Chapter~8]{bott-tu}) acting on $\{\nuh_{ijk}\}$, whose property that \hbox{$K\diff+\diff K= 1$} implies $\diff\{\etah_{ij}\}=\diff K \{\nuh_{ijk}\}=\{\nuh_{ijk}\}$ since $\diff\{\nuh_{ijk}\}=0$ for our choice of $\phi$. We then set
\eqn{
\{\etab_{ij}\}\df \{\eta_{ij}\}-d\{\etah_{ij}\},
}
whence we have
\eqn{
d\{\etab_{ij}\}=d\{\eta_{ij}\}-d^2\{\etah_{ij}\}=d\{\eta_{ij}\}
}
and
\begin{align*}
\diff \{\etab_{ij}\}&=\diff\{\eta_{ij}\}-\diff d\{\etah_{ij}\}\\
&=\diff\{\eta_{ij}\}-d\diff \{\etah_{ij}\}\\
&=\diff\{\eta_{ij}\}-d\{\nuh_{ijk}\}\\
&=\diff\{\eta_{ij}\}-\{\nu_{ijk}\}\\
&=0.
\end{align*}
Once again, the exactness of the generalised Mayer-Vietoris sequence provides with $\{\tau_i\}$ such that $\diff \{\tau_i\}=\{\etab_{ij}\}$, also explicitly given by
\eqn{
\tau_i\df\sum_j \rho_j\,\etab_{ij}.
}
Finally, we define $\{\psib_i\}\df\{\psi_i\}-d\{\tau_i\}$, which satisfy
\eqn{
d\{\psib_i\}=d\{\psi_i\}-d^2\{\tau_i\}=d\{\psi_i\}
}
and
\begin{align*}
\diff \{\psib_i\}&=\diff\{\psi_i\}-\diff d\{\tau_i\}\\
&=\diff\{\psi_i\}-d \diff \{\tau_i\}\\
&=\diff\{\psi_i\}-d \{\etab_{ij}\}\\
&=\diff\{\psi_i\}-\{\om_{ij}\}\\
&=0.
\end{align*}
In other words, $\{\psib_i\}$ agree on all the non-trivial intersections $\{U_{ij}\}$ and thus give rise to a global $2$\nobreakdash-form on $M$, say $\psib$.
\parnoi
We now proceed to derive the required lower bound for the quotient
\eqn{
\frac{\Vert \phi \Vert^2_{L^2\Om^3(M)}}{\Vert \psib \Vert^2_{L^2\Om^2(M)}},
}
using the upper bounds for $\exeig{3}{1}(U_i)$, $\exeig{2}{1}(U_{ij})$ and $\exeig{1}{1}(U_{ijk})$ obtained so far. Firstly,
\begin{align*}
\psib_i&=\psi_i-d\tau_i\\
&=\psi_i-d\sum_{j=1}^{k_0}\rho_j\etab_{ij}\\
&=\psi_i-d\sum_{j=1}^{k_0}\rho_j(\eta_{ij}-d\etah_{ij})\\
&=\psi_i-d\sum_{j=1}^{k_0}\rho_j\left(\eta_{ij}-d\sum_{k=1}^{k_0}\rho_k\nuh_{kij}\right).
\end{align*}
We have from~\eqref{exeig3-1 ineq} that
\eqn{
\nrm{\psi_i}^2\leq \frac{\nrm{\phi_i}^2}{\exeig{3}{1}(U_i)} \leq \frac{\nrm{\phi}^2}{\exeig{3}{1}(U_i)}.
}
Also,
\begin{align*}
\nrm{d\sum_{j=1}^{k_0}\rho_j \eta_{ij}}^2&=\nrm{\sum_{j=1}^{k_0} d\rho_j\wedge \eta_{ij}\pm\sum_{j=1}^{k_0}\rho_j \,d\eta_{ij}}^2\\
&\leq 2k_0 \left[\sum_{j=1}^{k_0}\nrm{d\rho_j\wedge \eta_{ij}}^2 +\sum_{j=1}^{k_0}\nrm{\rho_j\,d\eta_{ij}}^2 \right]\\
&\leq 2k_0\left[\sum_{j=1}^{k_0}c_\rho \nrm{\eta_{ij}}^2+\sum_{j=1}^{k_0}\nrm{d\eta_{ij}}^2 \right]\\
&\overset{\text{from~\eqref{exeig2-1 ineq}}}{\leq} 2k_0\sum_{j=1}^{k_0}\left(\frac{c_\rho}{\exeig{2}{1}(U_{ij})}+1  \right)\nrm{d\eta_{ij}}^2\\
&=2k_0\sum_{j=1}^{k_0}\left(\frac{c_\rho}{\exeig{2}{1}(U_{ij})}+1  \right)\nrm{\psi_i-\psi_j}^2\\
&\leq 4k_0 \sum_{j=1}^{k_0}\left(\frac{c_\rho}{\exeig{2}{1}(U_{ij})}+1  \right)\left(\nrm{\psi_i}^2+\nrm{\psi_j}^2\right),
\end{align*}
and
\begin{align*}
\nrm{\sum_{j=1}^{k_0} d\rho_j \wedge \left(\sum_{k=1}^{k_0}d(\rho_k \nuh_{kij})\right)}^2&\leq k_0 \sum_{j=1}^{k_0} \nrm{d\rho_j \wedge \left(\sum_{k=1}^{k_0}d(\rho_k \nuh_{kij})\right)}^2\\
&\leq k_0 c_\rho \sum_{j=1}^{k_0} \nrm{\sum_{k=1}^{k_0}d(\rho_k \nuh_{kij})}^2\\
&\leq k_0^2 c_\rho \sum_{j=1}^{k_0} \sum_{k=1}^{k_0}\nrm{d(\rho_k \nuh_{kij})}^2,
\end{align*}
with
\begin{align*}
\nrm{d(\rho_k \nuh_{kij})}^2&=\nrm{d\rho_k\,\nuh_{kij}\pm\rho_k\,d\nuh_{kij}}^2\\
&\leq 2\left[c_\rho \nrm{\nuh_{ijk}}^2 +\nrm{d\nuh_{ijk}}^2\right]\\
&\overset{\text{from~\eqref{exeig1-1 ineq}}}{\leq}2\left(\frac{c_\rho}{\exeig{1}{1}(U_{kij})}+1\right)\nrm{\nu_{kij}}^2\\
&\leq 2\left(\frac{c_\rho}{\exeig{1}{1}(U_{kij})}+1\right) \nrm{\eta_{ij}\pm\eta_{ik}\pm\eta_{jk}}^2\\
&\leq 6\left(\frac{c_\rho}{\exeig{1}{1}(U_{kij})}+1\right) \left(\nrm{\eta_{ij}}^2+\nrm{\eta_{ik}}^2+\nrm{\eta_{jk}}^2\right)\\
&\leq 6\left(\frac{c_\rho}{\exeig{1}{1}(U_{kij})}+1\right) \left(\frac{\nrm{\psi_i}^2+\nrm{\psi_j}^2}{\exeig{2}{1}(U_{ij})} + \frac{\nrm{\psi_i}^2+\nrm{\psi_k}^2}{\exeig{2}{1}(U_{ik})} + \frac{\nrm{\psi_j}^2+\nrm{\psi_k}^2}{\exeig{2}{1}(U_{jk})}  \right).
\end{align*}
Using all the above estimates, we get
\begin{multline*}
\nrm{\psib_i}^2 \leq 3\nrm{\psi_i}^2 + 12k_0\sum_{j=1}^{k_0}\left(\frac{c_\rho}{\exeig{2}{1}(U_{ij})}+1  \right)\left(\nrm{\psi_i}^2+\nrm{\psi_j}^2\right) \\  + 18k_0^2c_\rho\sum_{j=1}^{k_0}\sum_{k=1}^{k_0}\left(\frac{c_\rho}{\exeig{1}{1}(U_{kij})}+1\right) \left(\frac{\nrm{\psi_i}^2+\nrm{\psi_j}^2}{\exeig{2}{1}(U_{ij})} + \frac{\nrm{\psi_i}^2+\nrm{\psi_k}^2}{\exeig{2}{1}(U_{ik})} + \frac{\nrm{\psi_j}^2+\nrm{\psi_k}^2}{\exeig{2}{1}(U_{jk})}  \right).
\end{multline*}
Employing inequality~\eqref{exeig3-1 ineq} and taking the summation on both sides with $i$ in the range $\{1,\dots,k_0\}$, we get
\eqn{
\frac{\nrm{\psib}^2}{\nrm{\phi}^2}\leq \frac{\sum_{i=0}^{k_0}\nrm{\psib_i}^2}{\nrm{\phi}^2}\leq 18k_0^2 \aleph,
}
where $\aleph$ is as defined in the statement of Lemma~\ref{lem: various mcGow lemmas}. Finally, the variational characterisation~\eqref{varchar of dodziuk} implies that
\eqn{
\exeig{3}{1+k_3}(M)\geq \frac{\Vert \phi \Vert^2_{L^2\Om^3(M)}}{\Vert \psib \Vert^2_{L^2\Om^2(M)}} \geq \frac{(18k_0^2)\inv}{\aleph}.
}\qed
\section*{Acknowledgement}
The authors are grateful to Bruno Colbois for sharing his valuable insights and feedback on the preprint. Tirumala Chakradhar is a PhD student at the University of Bristol and thanks his supervisor Asma Hassannezhad for her guidance and the numerous discussions. The authors would also like to thank Alessandro Savo for the helpful feedback and suggestions, and the anonymous reviewer whose comments have improved the presentation.

\end{document}